\newtheorem{thm}{Theorem}[section]
\newtheorem{lem}[thm]{Lemma}
\theoremstyle{definition}
\newtheorem{dfn}[thm]{Definition}
\newtheorem{ques}[thm]{Question}
\newtheorem{rem}[thm]{Remark}
\newtheorem{conv}[thm]{Convention}
\theoremstyle{remark}
\newtheorem{claim}{Claim}
\newtheorem*{claim*}{Claim}
\renewcommand{\qedsymbol}{$\blacksquare$}
\numberwithin{equation}{thm}
\def\a{\mathrm{A}}
\def\add{\operatorname{add}}
\def\C{\mathcal{C}}
\def\cm{\operatorname{CM}}
\def\cok{\operatorname{Cok}}
\def\d{\mathrm{D}}
\def\db{\operatorname{D^b}}
\def\depth{\operatorname{depth}}
\def\ds{\operatorname{D^{sg}}}
\def\dso{\operatorname{D^{sg}_0}}
\def\E{\mathfrak{E}}
\def\e{\mathrm{E}}
\def\edim{\operatorname{edim}}
\def\Ext{\operatorname{Ext}}
\def\ext{\operatorname{ext}}
\def\ge{\geqslant}
\def\gr{\operatorname{gr}}
\def\h{\operatorname{H}}
\def\Hom{\operatorname{Hom}}
\def\ind{\operatorname{ind}}
\def\lcm{\operatorname{\underline{CM}}}
\def\le{\leqslant}
\def\m{\mathfrak{m}}
\def\mod{\operatorname{mod}}
\def\N{\mathbb{N}}
\def\n{\mathfrak{n}}
\def\P{\mathrm{P}}
\def\p{\mathfrak{p}}
\def\q{\mathfrak{q}}
\def\r{\mathfrak{r}}
\def\spec{\operatorname{Spec}}
\def\syz{\Omega}
\def\W{\mathcal{W}}
\def\X{\mathcal{X}}
\def\Y{\mathcal{Y}}
\def\Z{\mathbb{Z}}
\def\zv{\boldsymbol{0}}
\def\ZZ{\mathcal{Z}}
\begin{document}
\title[Extension-closed subcategories and finite/countable CM-representation type]{Extension-closed subcategories over hypersurfaces\\
of finite or countable CM-representation type}
\author{Kei-ichiro Iima}
\address{Department of Liberal Studies, National Institute of Technology, Nara College, 22 Yata-cho, Yamatokoriyama, Nara 639-1080, Japan}
\email{iima@libe.nara-k.ac.jp}
\author{Ryo Takahashi}
\address{Graduate School of Mathematics, Nagoya University, Furocho, Chikusaku, Nagoya 464-8602, Japan}
\email{takahashi@math.nagoya-u.ac.jp}
\urladdr{https://www.math.nagoya-u.ac.jp/~takahashi/}
\subjclass[2020]{13D09, 13C60, 13H10}
\keywords{ADE singularity, extension-closed subcategory, finite/countable CM-representation type, maximal Cohen--Macaulay module, singularity category, stable category}
\thanks{Takahashi was partly supported by JSPS Grant-in-Aid for Scientific Research 23K03070}
\dedicatory{Dedicated to Professor Yuji Yoshino on the occasion of his seventieth birthday}
\begin{abstract}
Let $k$ be an algebraically closed uncountable field of characteristic zero.
Let $R$ be a complete local hypersurface over $k$.
Denote by $\cm(R)$ the category of maximal Cohen--Macaulay $R$-modules and by $\ds(R)$ the singularity category of $R$.
Denote by $\cm_0(R)$ the full category of $\cm(R)$ consisting of modules that are locally free on the punctured spectrum of $R$, and by $\dso(R)$ the full subcategory of $\ds(R)$ consisting of objects that are locally zero on the punctured spectrum of $R$.
In this paper, under the assumption that $R$ has finite or countable CM-representation type, we completely classify the extension-closed subcategories of $\cm_0(R)$ in dimension at most two, and the extension-closed subcategories of $\dso(R)$ in arbitrary dimension.
\end{abstract}
\maketitle
\tableofcontents
\section{Introduction}

Let $R$ be a local hypersurface ring.
By virtue of \cite{stcm,thd} there are one-to-one correspondences between:
\begin{itemize}
\item
the resolving subcategories of $\mod R$ contained in $\cm(R)$,
\item
the thick subcategories of $\ds(R)$,
\item
the specialization-closed subsets of the singular locus of $R$,
\end{itemize}
where $\mod R$, $\cm(R)$, and $\ds(R)$ respectively stand for the category of finitely generated $R$-modules, the full subcategory of maximal Cohen--Macaulay $R$-modules, and the singularity category of $R$, i.e., the Verdier quotient of the bounded derived category of $\mod R$ by perfect complexes.
The bijections are explicitly given, which leads complete classifications of the resolving subcategories and thick subcategories mentioned above.

An {\em extension-closed subcategory} of an extriangulated category $\C$ (in the sense of \cite{NP}) is defined to be a full subcategory $\X$ of $\C$ which is closed under direct summands and such that for each conflation $X\to Y\to Z$ in $\C$, if $X$ and $Z$ belong to $\X$, then $Y$ also belongs to $\X$.
Note by definition that both a resolving subcategory of $\mod R$ and a thick subcategory of $\ds(R)$ are extension-closed subcategories.

Denote by $\cm_0(R)$ the full subcategory of $\cm(R)$ consisting of maximal Cohen--Macaulay modules which are locally free on the punctured spectrum of $R$, and by $\dso(R)$ the full subcategory of $\ds(R)$ consisting of objects which are locally zero on the punctured spectrum of $R$.
Note that the equalities $\cm_0(R)=\cm(R)$ and $\dso(R)=\ds(R)$ hold whenever $R$ has an isolated singularity.
The classification theorem stated above implies that, if $R$ is a local hypersurface ring, then there exist only trivial resolving subcategories of $\mod R$ contained in $\cm_0(R)$ and there exist only trivial thick subcategories of $\ds(R)$ contained in $\dso(R)$.
This fact motivates us to classify the extension-closed subcategories of $\cm_0(R)$ and $\dso(R)$.

In the present paper, we shall prove the following theorem.

\begin{thm}\label{28}
Let $k$ be an algebraically closed uncountable field of characteristic $0$.
Let $R$ be a singular local hypersurface ring with residue field $k$.
Suppose that $R$ has either finite or countable CM-representation type.
\begin{enumerate}[\rm(1)]
\item
The Hasse diagram of the partially ordered set of extension-closed subcategories of $\dso(R)$ with respect to the inclusion relation is one of the following four graphs.
$$
\xymatrix@R-1pc@C-1.5pc{
\bullet\ar@{-}[d]\\
\bullet
}\qquad\qquad
\xymatrix@R-1pc@C-1.5pc{
\bullet\ar@{-}[d]\ar@{-}[rd]\\
\bullet\ar@{-}[rd]&\bullet\ar@{-}[d]\\
&\bullet
}\qquad\qquad
\xymatrix@R-1pc@C-1.5pc{
\bullet\ar@{-}[d]\ar@{-}[rd]\\
\bullet\ar@{-}[d]&\bullet\ar@{-}[d]\\
\bullet\ar@{-}[rd]&\bullet\ar@{-}[d]\\
&\bullet
}\qquad\qquad
\xymatrix@R-1pc@C-1.5pc{
&&\bullet\ar@{-}[lld]\ar@{-}[ld]\ar@{-}[d]\ar@{-}[rd]\ar@{-}[rrd]\ar@{-}[rrrd]\\
\bullet\ar@{-}[d]\ar@{-}[rd]&\bullet\ar@{-}[ld]\ar@{-}[rd]&\bullet\ar@{-}[ld]\ar@{-}[rd]&\bullet\ar@{-}[d]\ar@{-}[rd]&\bullet\ar@{-}[lld]\ar@{-}[rd]&\bullet\ar@{-}[ld]\ar@{-}[d]\\
\bullet\ar@{-}[d]\ar@{-}[rd]&\bullet\ar@{-}[ld]\ar@{-}[rd]&\bullet\ar@{-}[ld]\ar@{-}[rd]&\bullet\ar@{-}[ld]\ar@{-}[rd]&\bullet\ar@{-}[d]\ar@{-}[rd]&\bullet\ar@{-}[lld]\ar@{-}[d]\\
\bullet\ar@{-}[rrd]&\bullet\ar@{-}[rd]&\bullet\ar@{-}[d]&\bullet\ar@{-}[ld]&\bullet\ar@{-}[lld]&\bullet\ar@{-}[llld]\\
&&\bullet
}
$$
\item
If $R$ has Krull dimension at most two, then the Hasse diagram of the partially ordered set of extension-closed subcategories of $\cm_0(R)$ with respect to the inclusion relation is one of the following five graphs.
$$
\xymatrix@R-1pc@C-1.7pc{
\bullet\ar@{-}[d]\\
\bullet\ar@{-}[d]\\
\bullet
}\qquad\quad
\xymatrix@R-1pc@C-1.7pc{
\bullet\ar@{-}[d]\ar@{-}[rd]\\
\bullet\ar@{-}[rd]&\bullet\ar@{-}[d]\\
&\bullet\ar@{-}[d]\\
&\bullet
}\qquad\quad
\xymatrix@R-1pc@C-1.7pc{
\bullet\ar@{-}[d]\ar@{-}[rd]\\
\bullet\ar@{-}[d]\ar@{-}[rd]&\bullet\ar@{-}[d]\ar@{-}[rd]\\
\bullet\ar@{-}[rd]&\bullet\ar@{-}[d]&\bullet\ar@{-}[ld]\\
&\bullet
}\qquad\quad
\xymatrix@R-1pc@C-1.7pc{
\bullet\ar@{-}[d]\ar@{-}[rd]\\
\bullet\ar@{-}[d]&\bullet\ar@{-}[d]\ar@{-}[rd]\\
\bullet\ar@{-}[d]\ar@{-}[rd]&\bullet\ar@{-}[d]\ar@{-}[rd]&\bullet\ar@{-}[d]\\
\bullet\ar@{-}[rd]&\bullet\ar@{-}[d]&\bullet\ar@{-}[ld]\\
&\bullet
}\qquad\quad
\xymatrix@R-1pc@C-1.7pc{
&&&&\bullet\ar@{-}[lld]\ar@{-}[ld]\ar@{-}[d]\ar@{-}[rd]\ar@{-}[rrd]\ar@{-}[rrrd]\\
&&\bullet\ar@{-}[ld]\ar@{-}[d]&\bullet\ar@{-}[lld]\ar@{-}[d]\ar@{-}[rrd]&\bullet\ar@{-}[lld]\ar@{-}[d]\ar@{-}[rrd]&\bullet\ar@{-}[rd]\ar@{-}[rrd]&\bullet\ar@{-}[ld]\ar@{-}[rrd]&\bullet\ar@{-}[d]\ar@{-}[rd]\ar@{-}[rrd]\\
&\bullet\ar@{-}[ld]\ar@{-}[d]\ar@{-}[rrd]&\bullet\ar@{-}[lld]\ar@{-}[d]\ar@{-}[rrd]&\bullet\ar@{-}[d]\ar@{-}[rrd]&\bullet\ar@{-}[d]\ar@{-}[rrd]&\bullet\ar@{-}[lllld]\ar@{-}[d]\ar@{-}[rrd]&\bullet\ar@{-}[lllld]\ar@{-}[d]\ar@{-}[rrd]&\bullet\ar@{-}[rd]\ar@{-}[rrd]\ar@{-}[rrrd]&\bullet\ar@{-}[ld]\ar@{-}[rd]\ar@{-}[rrrd]&\bullet\ar@{-}[rd]\ar@{-}[rrd]\\
\bullet\ar@{-}[rrd]\ar@{-}[rrrrrd]&\bullet\ar@{-}[rrd]\ar@{-}[rrrrd]&\bullet\ar@{-}[rrd]\ar@{-}[rrrd]&\bullet\ar@{-}[ld]\ar@{-}[d]&\bullet\ar@{-}[lld]\ar@{-}[d]&\bullet\ar@{-}[lld]\ar@{-}[rd]&\bullet\ar@{-}[lld]\ar@{-}[rd]&\bullet\ar@{-}[lld]\ar@{-}[ld]&\bullet\ar@{-}[llld]\ar@{-}[ld]&\bullet\ar@{-}[lllld]\ar@{-}[ld]&\bullet\ar@{-}[llld]\ar@{-}[lld]&\bullet\ar@{-}[llllld]\ar@{-}[llld]\\
&&\bullet\ar@{-}[rrrd]&\bullet\ar@{-}[rrd]&\bullet\ar@{-}[rd]&\bullet\ar@{-}[d]&\bullet\ar@{-}[ld]&\bullet\ar@{-}[lld]&\bullet\ar@{-}[llld]\\
&&&&&\bullet
}
$$
\end{enumerate}
\end{thm}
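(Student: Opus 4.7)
The plan is to reduce the classification to a short list of representative rings and then carry out an explicit combinatorial analysis on each. By the known classification of complete local hypersurfaces over $k$ of finite or countable CM-representation type, $R$ is, up to Knörrer periodicity, either a simple singularity of ADE type (the finite case) or a singularity of type $A_\infty$ or $D_\infty$ (the countable case). Since Knörrer periodicity induces stable equivalences that respect the local-freeness and local-vanishing conditions defining $\cm_0(R)$ and $\dso(R)$, it reduces part (1) to low dimension (in particular to $\dim R\le 1$, as $\dso$ is a purely stable invariant) and is compatible with the dimension hypothesis of part (2). Thus only finitely many rings $R$ need to be analyzed case by case.

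For each representative $R$ I would enumerate the indecomposables of $\dso(R)$ and of $\cm_0(R)$. In the ADE cases $R$ is an isolated singularity, so $\dso(R)=\ds(R)$ and $\cm_0(R)=\cm(R)$, and the indecomposables and the AR quiver are classical and of the corresponding Dynkin shape. In the $A_\infty$ and $D_\infty$ cases $R$ is non-isolated, and only a finite subset of the countably many indecomposables of $\cm(R)$ is locally free on the punctured spectrum; I would read this subset off from the explicit module-theoretic descriptions in the literature. Using AR translates and almost split sequences I would then tabulate $\Ext^1$ between all pairs of indecomposables in the finite set under consideration.

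With this data in hand, the remaining task is combinatorial bookkeeping. For each candidate subset of the finite set of indecomposables I would check closure under taking middle terms of conflations whose outer terms lie in the subset, list the surviving subsets, and assemble them into a poset under inclusion. Drawing the Hasse diagram of this poset and matching it against the four (respectively five) prescribed shapes completes the proof; I expect a bijection between the listed shapes and the singularity types on the classification list, with the smallest chain arising from $A_1$ and the largest diagrams from $D$- or $E$-type singularities and their infinite analogues. The main obstacle is in the larger cases: extension-closedness requires closure under \emph{iterated} extensions, not merely single $\Ext^1$-computations, so I would need a systematic saturation procedure to rule out ``hidden'' subcategories arising from long chains of conflations. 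For the diagram with six indecomposables on its top row this is especially delicate, and careful use of the AR triangle structure will be essential to verify that no unlisted extension-closed subcategory sneaks in.
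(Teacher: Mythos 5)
Your reduction — invoke the classification of hypersurfaces of finite or countable CM-representation type, use Kn\"orrer periodicity to drop to low dimension, note that $\dso(R)$ is computed from the stable category so that its poset is obtained from that of $\cm_0(R)$ by deleting the subcategories containing $R$ — is exactly the skeleton of the paper's proof of Theorem \ref{28}. But the proposal has two genuine gaps, one factual and one methodological, and together they leave essentially all of the real work undone.

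First, your claim that in the $\a_\infty$ and $\d_\infty$ cases ``only a finite subset of the countably many indecomposables of $\cm(R)$ is locally free on the punctured spectrum'' is false. For $\a_\infty^2=k[\![x,y,z]\!]/(xy)$ one has $\ind\cm_0(R)=\{R,I_i,J_i\mid i\in\Z_{>0}\}$, and for $\d_\infty^1$ the set $\ind\cm_0(R)$ is likewise countably infinite (Theorems \ref{30} and \ref{27}). So there is no ``finite set of indecomposables'' over which to run a combinatorial check of all candidate subsets; the finiteness of the poset of extension-closed subcategories is a conclusion, obtained by showing that infinitely many indecomposables share the same extension closure, not a starting point. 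The $\a_\infty^2$ case is in fact the hardest in the paper: proving $J_1\notin\ext I_1$ requires a normal-form analysis of the presentation matrices of arbitrary iterated extensions of copies of $I_1$ (Claims \ref{34} and \ref{33}), which no amount of AR-quiver bookkeeping replaces. Second, you correctly identify iterated extensions as ``the main obstacle'' but offer no method to overcome it, and a table of $\Ext^1$ groups between indecomposables does not determine the answer: one needs (i) positive extension-closedness results, which the paper gets from rigidity of specific modules (Lemmas \ref{1} and \ref{2}, plus the maximal rigid sums from \cite{BIKR} in the $\d_n$ even case); (ii) obstructions showing a module is \emph{not} in a given extension closure, which the paper gets from localization at non-maximal primes and rank/support invariants such as $\chi(E)$ in Theorem \ref{17}(2); and (iii) proofs that certain extension closures are all of $\cm(R)$, which the paper gets by composing AR sequences into exact squares (Lemma \ref{5}) to manufacture non-split self-extensions, combined with external inputs like \cite[Corollary 2.7]{stcm}. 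Without specifying these three mechanisms the ``saturation procedure'' you defer to is precisely the content of Sections 3 and 4, so the proposal as written does not constitute a proof.
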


\noindent
Note that $0,\add R,\cm_0(R)$ are always extension-closed subcategories of $\cm_0(R)$, and that $0,\dso(R)$ are always extension-closed subcategories of $\dso(R)$.
Hence the first diagrams in the two assertions of the above theorem mean that there exist only trivial extension-closed subcategories.

The organization of this paper is as follows.
In Section 2, we collect definitions and lemmas which are used in later sections.
In Sections 3 and 4, we respectively deal with finite CM-representation type and countable CM-representation type in dimension at most two.
In the final Section 5, we give proofs of Theorem \ref{28}.

\section{Basic definitions and fundamental lemmas}

This section is devoted to stating basic definitions and proving fundamental lemmas for later use.
First of all, let us provide our convention.

\begin{conv}
Let $k$ be an algebraically closed field of characteristic zero, so $\sqrt{-1}\in k$.
We assume that all rings are commutative and noetherian, all modules are finitely generated, and all subcategories are nonempty and strictly full.
Let $R$ be a (commutative noetherian) ring.
We denote by $\mod R$ the category of (finitely generated) $R$-modules, by $\cm(R)$ the (strictly full) subcategory of $\mod R$ consiting of maximal Cohen--Macaulay $R$-modules (we regard the zero $R$-module $0$ as maximal Cohen--Macaulay, so that $0$ belongs to $\cm(R)$), and by $\ds(R)$ the singularity category of $R$, which is defined as the Verdier quotient of the bounded derived category $\db(\mod R)$ of $\mod R$ by perfect complexes.
For each $\P\in\{\a_n,\a_\infty,\d_n,\d_\infty,\e_6,\e_7,\e_8\}$ we write $\P^d$ to indicate that the dimension of the $\P$-singularity considered is $d$.
We may omit a subscript or a superscript when it is clear from the context.
\end{conv}

We recall the notion of an exact square which plays an essential role in the proofs of our main results.

\begin{dfn}
A commutative diagram of homomorphisms of $R$-modules
$$
\xymatrix@R-.5pc@C-.5pc{
A\ar[r]^-a\ar[d]^-c& B\ar[d]^-b\\
C\ar[r]^-d& D
}
$$
is called an {\em exact square} if it is a both pushout and pullback diagram, or in other words, if the sequence $0 \to A\xrightarrow{\binom{a}{c}} B\oplus C\xrightarrow{(-b,d)} D\to0$ is exact.
\end{dfn}

It can be verified directly that the following lemma holds true; see \cite[Lemma 2.2(2)]{trans}.

\begin{lem}\label{5}
Let {\rm(1)} and {\rm(2)} below be exact squares.
Then {\rm(3)} below is an exact square as well.
$$
\xymatrix@R-.5pc@C-.5pc{
A\ar[r]^-a\ar[d]^-d\ar@{}[rd]|{\rm(1)}& B\ar[d]^-g\ar[r]^-b\ar@{}[rd]|{\rm(2)}& C\ar[d]^-c\\
D\ar[r]^-e& E\ar[r]^-f& F
}\qquad\qquad
\xymatrix@R-.5pc@C-.5pc{
A\ar[r]^-{ba}\ar[d]^-d\ar@{}[rd]|{\rm(3)}& C\ar[d]^-c\\
D\ar[r]^-{fe}& F
}
$$
In other words, if the sequences $0 \to A\xrightarrow{\binom{a}{d}} B\oplus D\xrightarrow{(-g,e)} E\to0$ and $0 \to B\xrightarrow{\binom{b}{g}} C\oplus E\xrightarrow{(-c,f)} F\to0$ are exact, then so is the sequence $0 \to A\xrightarrow{\binom{ba}{d}} C\oplus D\xrightarrow{(-c,fe)} F\to0$.
\end{lem}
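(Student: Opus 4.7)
The plan is a direct element-theoretic verification of the three conditions defining exactness of the sequence for square (3): injectivity of $\binom{ba}{d}$, exactness in the middle, and surjectivity of $(-c,fe)$. The commutativity $c(ba)=(fe)d$ of square (3) is automatic, since exactness of the two given short exact sequences gives $ga=ed$ and $cb=fg$, so $cba=fga=fed$.

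For injectivity, I would take $x\in A$ with $bax=0$ and $dx=0$, and aim to produce $ax=0$, which together with $dx=0$ yields $x=0$ by injectivity of $\binom{a}{d}$ from square (1). To produce $ax=0$, observe that $g(ax)=ed(x)=0$ and $b(ax)=0$, so $\binom{b}{g}(ax)=0$, and injectivity of $\binom{b}{g}$ from square (2) forces $ax=0$. For middle exactness, start with $(y,z)\in C\oplus D$ satisfying $cy=fez$. Then $(y,ez)\in C\oplus E$ lies in the kernel of $(-c,f)$, so exactness of (2) produces $w\in B$ with $bw=y$ and $gw=ez$. Now $(w,z)\in B\oplus D$ lies in the kernel of $(-g,e)$, so exactness of (1) produces $x\in A$ with $ax=w$ and $dx=z$. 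This $x$ satisfies $bax=bw=y$ and $dx=z$, as required.

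For surjectivity, given $p\in F$, surjectivity of $(-c,f)$ in square (2) yields $y\in C$ and $u\in E$ with $-cy+fu=p$. Surjectivity of $(-g,e)$ in square (1) then yields $v\in B$ and $z\in D$ with $u=-gv+ez$. Substituting and using $fg=cb$, one finds
\[
p=-cy+f(-gv+ez)=-cy-cbv+fez=-c(y+bv)+fez,
\]
so $(y+bv,z)\in C\oplus D$ maps via $(-c,fe)$ to $p$.

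There is no real obstacle here: the argument is a routine pasting-of-pullbacks-and-pushouts diagram chase, and the main thing to be careful about is the sign convention in the maps $(-g,e)$ and $(-c,f)$ when combining elements from the two exact sequences, which is why I would organize the proof around explicitly constructed elements rather than an abstract universal-property argument.
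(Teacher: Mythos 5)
Your proof is correct, and it is essentially the approach the paper intends: the paper omits the argument for Lemma \ref{5}, saying only that it ``can be verified directly'' and citing \cite[Lemma 2.2(2)]{trans}, and your element-wise chase (injectivity via $ga=ed$ forcing $ax=0$, middle exactness by lifting first through square (2) and then through square (1), and surjectivity by composing the two surjections with the sign bookkeeping $fg=cb$) is precisely such a direct verification.
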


The following elementary lemma will also be necessary.

\begin{lem}\label{7}
\begin{enumerate}[\rm(1)]
\item
If $0\to A\xrightarrow{\binom{f}{g}}B\oplus C\xrightarrow{(h,l)}D\to0$ and $0\to E\xrightarrow{m}A\xrightarrow{f}B\to0$ are exact sequences in $\mod R$, then there exists an exact sequence $0\to E\xrightarrow{gm}C\xrightarrow{l}D\to0$ in $\mod R$.
\item
If $0\to A\xrightarrow{\binom{h}{l}}B\oplus C\xrightarrow{(f,g)}D\to0$ and $0\to C\xrightarrow{g}D\xrightarrow{m}E\to0$ are exact sequences in $\mod R$, then there exists an exact sequence $0\to A\xrightarrow{h}B\xrightarrow{mf}E\to0$ in $\mod R$.
\end{enumerate}
\end{lem}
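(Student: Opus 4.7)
The plan is to verify both parts by direct diagram chasing; since (1) and (2) are formally dual, I would lay out (1) in full and dispatch (2) by recording the dual manipulations.

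For (1), exactness of the first displayed sequence encodes three things: $\binom{f}{g}$ is injective, $(h,l)$ is surjective, and $hf+lg=0$. Exactness of the second supplies $m$ injective, $f$ surjective, and $\ker f=\operatorname{im} m$. The task is then to verify the four short-exact-sequence conditions for $0\to E\xrightarrow{gm}C\xrightarrow{l}D\to0$. Three of them are routine: the composition vanishes because $l(gm)=-(hf)m=0$ thanks to $fm=0$; surjectivity of $l$ comes from writing $d\in D$ as $h(b)+l(c)$ and lifting $b=f(a)$, yielding $d=l(c-g(a))$; and injectivity of $gm$ follows since if $gm(e)=0$ and $a=m(e)$, then both $f(a)=0$ and $g(a)=0$ give $\binom{f}{g}(a)=0$, hence $a=0$ and $e=0$.

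The one substantive step is exactness at $C$: if $l(c)=0$, then $(0,c)\in\ker(h,l)=\operatorname{im}\binom{f}{g}$, so $(0,c)=(f(a),g(a))$ for some $a\in A$; then $f(a)=0$ yields $a=m(e)$ for a unique $e\in E$, and $c=g(a)=gm(e)$. Each arrow of this chase uses only one of the stated exactness conditions, and together they close the loop.

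Part (2) follows by the dual chase, exploiting $fh+gl=0$ together with $g$ injective and $(f,g)$ surjective: injectivity of $h$ uses $h(a)=0\Rightarrow gl(a)=0\Rightarrow l(a)=0\Rightarrow\binom{h}{l}(a)=0$; surjectivity of $mf$ lifts $e\in E$ to $d\in D$ and decomposes $d=f(b)+g(c)$; and exactness at $B$ uses $\ker m=\operatorname{im} g$ to turn $mf(b)=0$ into $f(b)=g(c)$, so $(b,-c)\in\ker(f,g)=\operatorname{im}\binom{h}{l}$, giving $a\in A$ with $h(a)=b$. I do not expect any real obstacle here; the entire argument is sign-bookkeeping and standard kernel/image identifications, with the exactness-at-$C$ step of (1) being the only one that requires chaining two separate exactness conditions together.
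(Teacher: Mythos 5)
Your proof is correct: in each part all four conditions (vanishing composite, injectivity, surjectivity, exactness in the middle) are verified accurately, including the key step at $C$ in (1) where you chain $\ker(h,l)=\operatorname{im}\binom{f}{g}$ with $\ker f=\operatorname{im}m$. The paper establishes the same thing by assembling the data into a pullback (resp.\ pushout) morphism of short exact sequences with identity on $E$, which is just the packaged form of the element chase you carried out, so the approaches are essentially the same.
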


\begin{proof}
In the situations of (1) and (2), there respectively exist pullback and pushout diagrams
$$
\xymatrix@R-1pc@C-1pc{
0\ar[r]& E\ar[r]^m\ar@{=}[d]& A\ar[r]^f\ar[d]^g& B\ar[r]\ar[d]^{-h}& 0\\
0\ar[r]& E\ar[r]^{gm}& C\ar[r]^l& D\ar[r]& 0
}\qquad\qquad
\xymatrix@R-1pc@C-1pc{
0\ar[r]& A\ar[r]^h\ar[d]^{-l}& B\ar[r]^{mf}\ar[d]^f& E\ar[r]\ar@{=}[d]& 0\\
0\ar[r]& C\ar[r]^g& D\ar[r]^m& E\ar[r]& 0
}
$$
which show the assertions of the lemma.
\end{proof}

We introduce our two main ambient categories.

\begin{dfn}
Let $R$ be a local ring with maximal ideal $\m$.
Denote by $\dso(R)$ the subcategory of $\ds(R)$ consisting of objects that are locally zero on the punctured spectrum of $R$, and by $\cm_0(R)$ the subcategory of $\cm(R)$ consisting of $R$-modules that are locally free on the punctured spectrum of $R$.
Thus:
$$
\begin{array}{l}
\dso(R)=\{X\in\ds(R)\mid\text{$X_\p\cong0$ for all $\m\ne\p\in\spec R$}\},\\
\cm_0(R)=\{M\in\cm(R)\mid\text{$M_\p$ is $R_\p$-free for all $\m\ne\p\in\spec R$}\}.
\end{array}
$$
\end{dfn}

\begin{rem}
Let $R$ be a local ring.
Then $R$ has an isolated singularity if and only if $\dso(R)=\ds(R)$.
When $R$ is a Cohen--Macaulay local ring, $R$ has an isolated singularity if and only if $\cm_0(R)=\cm(R)$.
\end{rem}

Now we give the precise definition of an extension-closed subcategory.
Note that in our sense an extension-closed subcategory is necessarily closed under direct summands.

\begin{dfn}
Let $\C$ be an additive category.
\begin{enumerate}[(1)]
\item
We say that a subcategory $\X$ of $\C$ is {\em closed under finite direct sums} provided that if $X_1,\dots,X_n$ are a finite number of objects of $\C$ that belong to $\X$, then the direct sum $X_1\oplus\cdots\oplus X_n$ belongs to $\X$ as well.
\item
We say that a subcategory $\X$ of $\C$ is {\em closed under direct summands} provided that if $X$ is an object of $\C$ that belongs to $\X$ and $Y$ is an object of $\C$ which is a direct summand of $X$, then $Y$ belongs to $\X$.
\item
An {\em additively closed} subcategory of $\C$ is defined to be a subcategory of $\C$ which is closed under finite direct sums and direct summands.
\item
Assume that $\C$ is extriangulated in the sense of \cite{NP}.
Let $\X$ be a subcategory of $\C$ closed under direct summands.
We say that $\X$ is {\em extension-closed} provided that for every conflation $L\to M\to N$ in $\C$, if $L$ and $N$ belong to $\X$, then so does $M$.
\end{enumerate}
\end{dfn}

\begin{rem}
\begin{enumerate}[(1)]
\item
Since $\dso(R)$ is a triangulated category, it is an extriangulated category.
A subcategory $\X$ of $\dso(R)$ closed under direct summands is extension-closed if and only if for each exact triangle $L\to M\to N\rightsquigarrow$ in $\dso(R)$ with $L,N\in\X$ one has $M\in\X$.
\item
Let $R$ be a Cohen--Macaulay local ring.
Then $\cm_0(R)$ is an exact category whose conflations are the short exact sequences of maximal Cohen--Macaulay $R$-modules that are locally free on the punctured spectrum.
Hence $\cm_0(R)$ is an extriangulated category.
A subcategory $\X$ of $\cm_0(R)$ closed under direct summands is extension-closed if and only if for an exact sequence $0\to L\to M\to N\to0$ of modules in $\cm_0(R)$ with $L,N\in\X$ one has $M\in\X$.
\end{enumerate}
\end{rem}

Next we introduce some notations.

\begin{dfn}
Let $\C$ be an additive category, and let $\X$ be a subcategory of $\C$.
\begin{enumerate}[(1)]
\item
We denote by $\ind_\C\X$ the set of isomorphism classes of indecomposable objects in $\X$.
\item
Denote by $\add_\C\X$ the {\em additive closure} of $\X$, which is defined as the smallest additively closed subcategory of $\C$ containing $\X$.
Hence $\add_\C\X$ consists of the direct summands of finite direct sums of objects in $\X$.
\item
Assume that $\C$ is extriangulated.
We denote by $\ext_\C\X$ the {\em extension closure} of $\X$, which is by definition the smallest extension-closed subcategory of $\C$ containing $\X$.
\end{enumerate}
When $\X$ is the subcategory of $\C$ defined by a single object $X$, we may write $\add_\C X$ and $\ext_\C X$ instead of $\add_\C\X$ and $\ext_\C\X$, respectively.
Note that $\add_\C X$ consists of the direct summands of finite direct sums of copies of $X$.
Let $\E(R),\E_0(R)$ be the sets of extension-closed subcategories of $\cm(R),\cm_0(R)$ respectively.
Note that $\E(R),\E_0(R)$ are posets (partially ordered sets) with respect to the inclusion relation $\subseteq$.
\end{dfn}

We recall the definitions of finite and countable CM-representation types.

\begin{dfn}
Let $R$ be a Cohen--Macaulay local ring.
We say that $R$ has {\em finite CM-representation type} if $\ind\cm(R)$ is finite.
We say that $R$ has {\em countable CM-representation type} if $\ind\cm(R)$ is countably infinite.
\end{dfn}

The following lemma states some properties of additive and extension closures which will often be used.

\begin{lem}\label{3}
\begin{enumerate}[\rm(1)]
\item
Let $0\to L\to M\to N\to0$ be an exact sequence of $R$-modules.
Let $\X$ be a subcategory of $\mod R$.
Suppose that $L,N$ belong to $\add\X$.
Then there exists an exact sequence $0\to A \to B\to C\to0$ of $R$-modules such that $A,C$ are finite direct sums of modules in $\X$ and $M$ is a direct summand of $B$.
\item
Let $M$ and $N$ be $R$-modules, and let $S$ be a multiplicatively closed subset of $R$.
Suppose that $M$ belongs to the extension closure $\ext_{\mod R}N$.
Then the localization $M_S$ belongs to the extension closure $\ext_{\mod R_S}N_S$.
\item
Let $R$ be a Gorenstein local ring.
Let $\X$ and $\Y$ be subcategories of $\cm(R)$.
If the equality $\ext\X=\add\Y$ holds, then the equality $\ext(\X\cup\{R\})=\add(\Y\cup\{R\})$ holds as well.
\end{enumerate}
\end{lem}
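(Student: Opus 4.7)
The plan is to handle the three parts separately, each by a direct construction or a short induction.

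For (1), I would choose complementary modules $L'$ and $N'$ so that $L\oplus L'\cong X_1\oplus\cdots\oplus X_n$ and $N\oplus N'\cong Y_1\oplus\cdots\oplus Y_m$ with all $X_i,Y_j\in\X$. Adding the given sequence to the split exact sequence $0\to L'\to L'\oplus N'\to N'\to 0$ yields $0\to L\oplus L'\to M\oplus L'\oplus N'\to N\oplus N'\to 0$, which has the required form with $M$ a direct summand of the middle term.

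For (2), I would stratify the extension closure: set $\ext^0 N=\add N$ and, recursively, $\ext^{n+1}N$ to be the closure of $\ext^n N$ under direct summands together with middle terms of short exact sequences whose outer terms lie in $\ext^n N$. Then $\ext_{\mod R}N=\bigcup_n\ext^n_R N$. Since localization at $S$ is an exact functor commuting with finite direct sums and direct summands, a routine induction on $n$ shows $M\in\ext^n_R N\Rightarrow M_S\in\ext^n_{R_S}N_S$, which is the desired conclusion.

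For (3), the inclusion $\add(\Y\cup\{R\})\subseteq\ext(\X\cup\{R\})$ is immediate since $\Y\subseteq\add\Y=\ext\X\subseteq\ext(\X\cup\{R\})$ and $R\in\ext(\X\cup\{R\})$, while $\ext(\X\cup\{R\})$ is closed under finite direct sums (via split conflations) and direct summands. For the reverse inclusion, it suffices to show that $\add(\Y\cup\{R\})$ is itself extension-closed in $\cm(R)$. Given $0\to L\to M\to N\to 0$ in $\cm(R)$ with $L,N\in\add(\Y\cup\{R\})$, I choose $L',N'$ and $A,C\in\add\Y$ with $L\oplus L'\cong A\oplus R^a$ and $N\oplus N'\cong C\oplus R^b$. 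Adding the split sequence $0\to L'\to L'\oplus N'\to N'\to 0$ as in (1) produces $0\to A\oplus R^a\to M_1\to C\oplus R^b\to 0$ with $M$ a direct summand of $M_1$. Because $R^b$ is projective, $\Ext^1_R(R^b,A\oplus R^a)=0$, so the $R^b$-factor splits off the right, giving $M_1\cong M_2\oplus R^b$ with $0\to A\oplus R^a\to M_2\to C\to 0$. Now the Gorenstein hypothesis enters: for $C\in\cm(R)$ we have $\Ext^1_R(C,R)=0$, hence $\Ext^1_R(C,R^a)=0$, and via the decomposition $\Ext^1_R(C,A\oplus R^a)=\Ext^1_R(C,A)\oplus\Ext^1_R(C,R^a)$ the $R^a$-factor splits off the left, yielding $M_2\cong M_3\oplus R^a$ with $0\to A\to M_3\to C\to 0$. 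Since $A,C\in\add\Y=\ext\X$ and $\ext\X$ is extension-closed, $M_3\in\add\Y$. Therefore $M_1\cong M_3\oplus R^{a+b}\in\add(\Y\cup\{R\})$, and $M$, being its direct summand, also lies in $\add(\Y\cup\{R\})$.

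The main obstacle is the final splitting argument in (3): one needs to peel off the free summands on both sides of the sequence, and this is the only place the Gorenstein assumption is truly essential, through the vanishing $\Ext^1_R(C,R)=0$ for maximal Cohen--Macaulay $C$. The remaining steps are essentially bookkeeping with direct sums and the exactness of localization.
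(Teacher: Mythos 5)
Your proof is correct in all three parts. Part (1) is identical to the paper's argument. In part (2) you take a genuinely different route: you build $\ext_{\mod R}N$ bottom-up as a union of stages $\ext^nN$ and induct on $n$, using exactness of localization at each step, whereas the paper argues top-down by introducing the subcategory $\{X\in\mod R\mid X_S\in\ext_{\mod R_S}N_S\}$ and checking that it contains $N$ and is extension-closed, so that it must contain $\ext_{\mod R}N$. Both are standard; the paper's version avoids having to justify that the union of the stages really is the extension closure, while yours makes the inductive structure explicit and would generalize to any property preserved by the two closure operations. In part (3) your argument has the same skeleton as the paper's (reduce via (1) to a sequence $0\to A\oplus R^{\oplus a}\to B\to C\oplus R^{\oplus c}\to0$ and peel off the free summands from both ends), but where the paper exhibits explicit pushout and pullback diagrams and splits a column in each, you invoke the direct-sum decompositions $\Ext_R^1(C\oplus R^{\oplus c},-)\cong\Ext_R^1(C,-)\oplus\Ext_R^1(R^{\oplus c},-)$ and $\Ext_R^1(-,A\oplus R^{\oplus a})\cong\Ext_R^1(-,A)\oplus\Ext_R^1(-,R^{\oplus a})$ together with projectivity of $R^{\oplus c}$ and the Gorenstein vanishing $\Ext_R^1(C,R)=0$ for $C$ maximal Cohen--Macaulay; this is the same mathematical content in cohomological rather than diagrammatic packaging. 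You correctly isolate the Gorenstein hypothesis as entering only through that last vanishing, exactly as in the paper.
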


\begin{proof}
(1) There exist $R$-modules $L',N'$ and finite direct sums $A,C$ of $R$-modules in $\X$ such that $L\oplus L'\cong A$ and $N\oplus N'\cong C$.
Taking the direct sum with the trivial exact sequences $0\to L'\to L'\to0\to0$ and $0\to0\to N'\to N'\to0$, we get an exact sequence $0\to L\oplus L'\to L'\oplus M\oplus N'\to N\oplus N'\to0$.
Setting $B=L'\oplus M\oplus N'$, we obtain such an exact sequence $0\to A\to B\to C\to0$ as in the assertion.

(2) Let $\X$ be the subcategory of $\mod R$ consisting of $R$-modules $X$ such that $X_S\in\ext_{\mod R_S}N_S$.
Then $\X$ contains $N$.
Let $Z$ be an $R$-module in $\X$ and $W$ is a direct summand of $Z$.
Then $Z_S$ is in $\ext_{\mod R_S}N_S$ and $W_S$ is a direct summand of $Z_S$.
Hence $W_S$ is in $\ext_{\mod R_S}N_S$, so that $W$ is in $\X$.
Let $0\to X\to E\to Y\to0$ be an exact sequence in $\mod R$ such that $X,Y\in\X$.
Then there is an exact sequence $0\to X_S\to E_S\to Y_S\to0$ in $\mod R_S$ and $X_S,Y_S$ belong to $\ext_{\mod R_S}N_S$.
Hence $E_S$ belongs to $\ext_{\mod R_S}N_S$, and we see that $E$ is in $\X$.
The subcategory $\X$ of $\mod R$ contains $N$ and is extension-closed.
It follows that $\ext N$ is contained in $\X$.
Since $M$ is in $\ext N$, we observe that $M_S$ is in $\ext_{\mod R_S}N_S$.

(3) We have $R\in\add(\Y\cup\{R\})\supseteq\add\Y=\ext\X\supseteq\X$.
Hence $\add(\Y\cup\{R\})$ contains $\X\cup\{R\}$, and is closed under direct summands by definition.
Let $0\to L\to M\to N\to0$ be an exact sequence in $\mod R$ with $L,N\in\add(\Y\cup\{R\})$.
By assertion (1), we get an exact sequence $0\to A\oplus R^{\oplus a}\to B\to C\oplus R^{\oplus c}\to0$ with $a,c\in\N$ and $A,C\in\add\Y$ such that $M$ is a direct summand of $B$.
We have pushout and pullback diagrams:
$$
\xymatrix@R-1pc@C-1pc{
&0\ar[d]&0\ar[d]\\
&R^{\oplus a}\ar@{=}[r]\ar[d]&R^{\oplus a}\ar[d]\\
0\ar[r]&A\oplus R^{\oplus a}\ar[r]\ar[d]& B\ar[r]\ar[d]& C\oplus R^{\oplus c}\ar[r]\ar@{=}[d]&0\\
0\ar[r]&A\ar[r]\ar[d]&V\ar[r]\ar[d]&C\oplus R^{\oplus c}\ar[r]&0\\
&0&0
}\qquad
\xymatrix@R-1pc@C-1pc{
&&0\ar[d]&0\ar[d]\\
0\ar[r]&A\ar[r]\ar@{=}[d]&W\ar[r]\ar[d]&C\ar[r]\ar[d]&0\\
0\ar[r]&A\ar[r]&V\ar[d]\ar[r]&C\oplus R^{\oplus c}\ar[r]\ar[d]&0\\
&&R^{\oplus c}\ar[d]\ar@{=}[r]&R^{\oplus c}\ar[d]\\
&&0&0}
$$
As $R$ is Gorenstein and $V$ is maximal Cohen--Macaulay, the middle column in the left diagram splits and we get $B\cong R^{\oplus a}\oplus V$.
The middle column in the right diagram splits, so $V\cong W\oplus R^{\oplus c}$.
Hence $B\cong W\oplus R^{\oplus(a+c)}$.
As $\add\Y=\ext\X$ is extension-closed and contains $A,C$, the first row in the right diagram shows $W\in\add\Y$.
Therefore, $B$ is in $\add(\Y\cup\{R\})$, and so is $M$.
We have shown that $\add(\Y\cup\{R\})$ contains $\ext(\X\cup\{R\})$.
Let $\ZZ$ be an extension-closed subcategory of $\mod R$ containing $\X\cup\{R\}$.
Since $\ZZ$ contains $\X$ and is extension-closed, $\ZZ$ contains $\ext\X=\add\Y$.
Hence $\ZZ$ contains $\add(\Y\cup\{R\})$.
It follows that $\add(\Y\cup\{R\})=\ext(\X\cup\{R\})$.
\end{proof}

Next we recall some notions from commutative algebra.

\begin{dfn}
Let $R$ be a local ring.
\begin{enumerate}[(1)]
\item
We say that $R$ is a {\em hypersurface} if it satisfies the inequality $\edim R-\depth R\le1$, where $\edim R$ and $\depth R$ stand for the embedding dimension of $R$ and the depth of $R$, respectively.
\item
Let $M$ be an $R$-module.
For an integer $n>0$, we denote by $\syz^nM$ the {\em $n$th syzygy} of $M$, that is, the image of the $n$th differential map in a minimal free resolution of $M$.
We put $\syz M=\syz^1M$ and $\syz^0M=M$.
\item
An $R$-module $M$ is called {\em rigid} if $\Ext_R^1(M,M)=0$.
\end{enumerate}
\end{dfn}

\begin{rem}
\begin{enumerate}[(1)]
\item
A local ring $(R,\m)$ is a hypersurface if and only if the $\m$-adic completion of $R$ is isomorphic to $S/(f)$ for some regular local ring $(S,\n)$ and some element $f\in\n$; see \cite[5.1]{A}.
\item
Let $R$ be a hypersurface.
Let $M$ be a nonfree maximal Cohen--Macaulay $R$-module with no free summand.
Then there is an isomorphism $\syz^2M\cong M$; see \cite[5.1.1 and 5.1.2]{A}.
\item
For an $R$-module $M$ and an integer $n\ge0$, the $n$th syzygy $\syz^nM$ is uniquely determined up to isomorphism, since so is a minimal free resolution of $M$. 
\end{enumerate}
\end{rem}

We investigate the relationship between extension closures and syzygies over a local ring.

\begin{lem}\label{4}
Let $(R,\m,k)$ be a local ring of (Krull) dimension $d$.
Let $M$ be an $R$-module.
\begin{enumerate}[\rm(1)]
\item
Let $N$ be an $R$-module.
If $N$ belongs to $\ext M$, then $\syz N$ belongs to $\ext\syz M$. 
\item
Suppose that the local ring $R$ is Gorenstein and singular, and that the $R$-module $M$ is maximal Cohen--Macaulay.
If $\ext M=\cm_0(R)$, then $\ext\syz^iM=\cm_0(R)$ for all integers $i\ge0$.
\end{enumerate}
\end{lem}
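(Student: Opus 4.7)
The plan is to prove (1) by the classical subcategory-of-definition trick, and to bootstrap to (2) by induction on $i$. The new ingredient for (2) is an exact sequence that exhibits the free module $R$ as an extension of two syzygies of $M$; the chief obstacle throughout is that the syzygy operator loses free direct summands, so cosyzygy constructions will only recover modules up to free summands, and we must separately certify that $R$ itself lies in $\ext\syz M$.

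For (1), I would introduce the subcategory
$$\X=\{N\in\mod R\mid\syz N\in\ext\syz M\}$$
and show it is an extension-closed subcategory of $\mod R$ containing $M$, which will force $\ext M\subseteq\X$. The module $M$ lies in $\X$ tautologically, and $\X$ is closed under direct summands because $\syz$ commutes with direct sums and $\ext\syz M$ is closed under direct summands. For closure under extensions, I would take a short exact sequence $0\to N_1\to E\to N_2\to0$ with $N_1,N_2\in\X$; the horseshoe lemma applied to minimal free covers of $N_1,N_2$ assembles into a (possibly non-minimal) free cover of $E$ whose kernel fits into an exact sequence $0\to\syz N_1\to\syz E\oplus R^k\to\syz N_2\to0$ for some $k\ge0$. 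Since $\syz N_1,\syz N_2\in\ext\syz M$, the middle term lies in $\ext\syz M$, and hence so does its direct summand $\syz E$.

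For (2), I would induct on $i$, with the base case being the hypothesis. The inductive step reduces to the single statement that whenever $\ext M'=\cm_0(R)$ for a maximal Cohen--Macaulay $R$-module $M'$, one has $\ext\syz M'=\cm_0(R)$; indeed, applying this with $M'=\syz^{i-1}M$ (to which the inductive hypothesis applies) yields the assertion for $i$. The inclusion $\ext\syz M\subseteq\cm_0(R)$ is immediate because $\syz M\in\cm_0(R)$ and $\cm_0(R)$ is extension-closed, so the content is the reverse inclusion.

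The reverse inclusion has two steps, the first being the main obstacle. To show $R\in\ext\syz M$: since $R$ is singular, the $d$th syzygy $\syz^d k$ (with $d=\dim R$) is a nonfree maximal Cohen--Macaulay module in $\cm_0(R)$, so $\add R\subsetneq\cm_0(R)=\ext M$ forces $M$ not to be free; hence $\syz M\ne0$, and the minimal free cover of $\syz M$ gives an exact sequence
$$0\to\syz^2M\to R^n\to\syz M\to0$$
with $n\ge1$. Part (1) applied to $\syz M\in\cm_0(R)=\ext M$ yields $\syz^2M\in\ext\syz M$, and tautologically $\syz M\in\ext\syz M$; the extension-closedness of $\ext\syz M$ therefore forces $R^n\in\ext\syz M$, whence $R\in\ext\syz M$. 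With free modules now available in $\ext\syz M$, the proof concludes by a cosyzygy argument: for any $N\in\cm_0(R)$, the Gorenstein hypothesis produces a cosyzygy $\syz^{-1}N$, and Auslander--Buchsbaum applied locally at each prime $\p\ne\m$ shows $\syz^{-1}N\in\cm_0(R)=\ext M$. Part (1) then gives $\syz\syz^{-1}N\in\ext\syz M$, and since $\syz\syz^{-1}N$ agrees with $N$ up to free direct summands and free modules now lie in $\ext\syz M$, we conclude that $N\in\ext\syz M$.
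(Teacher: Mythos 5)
Your proof of (1) is the same subcategory-of-definition argument as the paper's, down to the horseshoe-lemma sequence $0\to\syz N_1\to\syz E\oplus R^{\oplus k}\to\syz N_2\to0$, and your reduction of (2) to the single step $i=1$ also matches. Where you genuinely diverge is in how you finish (2). The paper takes the cosyzygy $N$ of $\syz^dk$ (so that $\syz N\cong\syz^dk\oplus R^{\oplus n}$), invokes Dutta's theorem to conclude $n=0$ because $R$ is singular, deduces $\syz^dk\in\ext\syz M$ from part (1), and then cites \cite[Corollary 2.6]{stcm} to get $\ext\syz^dk=\cm_0(R)$. You instead first certify $R\in\ext\syz M$ directly: singularity of $R$ gives $\add R\subsetneq\cm_0(R)=\ext M$, hence $M$ is nonfree, hence the minimal presentation $0\to\syz^2M\to R^{\oplus n}\to\syz M\to0$ has $n\ge1$ and traps $R^{\oplus n}$ between two modules already known (by part (1)) to lie in $\ext\syz M$; you then sweep up an arbitrary $N\in\cm_0(R)$ by writing it, up to free summands, as $\syz(\syz^{-1}N)$ with $\syz^{-1}N\in\cm_0(R)=\ext M$. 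Both arguments are correct. The paper's is shorter given the external inputs; yours is self-contained, needing neither Dutta's theorem nor the quoted corollary of \cite{stcm}, at the cost of using the Gorenstein hypothesis twice (for the cosyzygy and for reflexivity of maximal Cohen--Macaulay modules) and of the routine verification that stably isomorphic maximal Cohen--Macaulay modules agree up to free summands.
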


\begin{proof}
(1) Let $\X$ be the subcategory of $\mod R$ consisting of $R$-modules $X$ such that $\syz X$ is in $\ext\syz M$.
Then $M$ is in $\X$.
Let $X$ be an $R$-module in $\X$ and $Y$ a direct summand of $X$.
Then $\syz Y$ is a direct summand of $\syz X$.
Since $\syz X$ is in $\ext\syz M$, so is $\syz Y$.
Therefore, $Y$ belongs to $\X$.
Let $0\to A\to B\to C\to0$ be an exact sequence of $R$-modules with $A,C\in\X$.
Then there is an exact sequence $0\to\syz A\to\syz B\oplus R^{\oplus n}\to\syz C\to0$ with $n\in\N$.
Since $\syz A,\syz C$ are in $\ext\syz M$, so is $\syz B$.
Hence $B$ is in $\X$.
Thus, $\X$ is extension-closed and contains $M$.
It follows that $\X$ contains $\ext M$, and $N$ belongs to $\X$, which means that $\syz N$ is in $\ext\syz M$.

(2) We may assume $i=1$.
The module $M$ is in $\ext M=\cm_0(R)$, so that $\syz M$ is also in $\cm_0(R)$, whence $\ext\syz M\subseteq\cm_0(R)$.
As $R$ is Gorenstein and $M$ is maximal Cohen--Macaulay, there exists a maximal Cohen--Macaulay $R$-module $N$ such that $\syz^dk\cong\syz N\oplus R^{\oplus n}$ for some $n\in\N$.
As $R$ is singular, we have $n=0$ by \cite[Corollary 1.3]{D}.
Since $\syz^dk$ belongs to $\cm_0(R)$, so does $\syz N$.
Since $N$ is a maximal Cohen--Macaulay module over a Gorenstein ring $R$, it is observed that $N$ is in $\cm_0(R)$.
As $\cm_0(R)=\ext M$, we see from (1) that $\syz N$ belongs to $\ext\syz M$.
Hence $\syz^dk$ is in $\ext\syz M$.
It follows from \cite[Corollary 2.6]{stcm} that $\ext\syz M=\cm_0(R)$.
\end{proof}

We study extension-closedness of additive closures of rigid maximal Cohen--Macaulay modules.

\begin{lem}\label{2}
Let $R$ be a Cohen--Macaulay local ring.
Let $M$ be a rigid maximal Cohen--Macaulay $R$-module.
\begin{enumerate}[\rm(1)]
\item
The additive closure $\add M$ is an extension-closed subcategory of $\cm(R)$.
\item
If $R$ is Gorenstein, then $\add\{R,M\}$ is an extension-closed subcategory of $\cm(R)$.
\item
If $M$ has no nonzero free summand, then $\add M\ne\add\{R,M\}$.
\end{enumerate}
\end{lem}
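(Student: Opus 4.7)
My plan is to take the three parts in order: (1) is the technical core, (2) follows by appealing to Lemma~\ref{3}(3), and (3) reduces to a direct argument about free summands.

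For part (1), I would start with an arbitrary short exact sequence $0\to L\to N\to K\to 0$ of maximal Cohen--Macaulay modules in which $L,K\in\add M$. Lemma~\ref{3}(1) then produces a short exact sequence
$$
0\to M^{\oplus a}\to B\to M^{\oplus b}\to 0
$$
with $N$ a direct summand of $B$. The rigidity assumption yields $\Ext_R^1(M^{\oplus b},M^{\oplus a})\cong\Ext_R^1(M,M)^{\oplus ab}=0$, so this sequence splits and $B\cong M^{\oplus(a+b)}$. Thus $N\in\add M$, and since $\add M$ is by construction closed under direct summands this establishes the extension-closedness claimed.

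For part (2), observe that (1) gives $\ext M=\add M$. Applying Lemma~\ref{3}(3) with $\X=\Y=\{M\}$ upgrades this to $\ext(\{M,R\})=\add(\{M,R\})$, which is precisely the extension-closedness of $\add\{R,M\}$.

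For part (3), I would argue by contradiction: if $R\in\add M$, then $R$ is a direct summand of some $M^{\oplus n}$, giving a split surjection $\pi=(\pi_1,\dots,\pi_n)\colon M^{\oplus n}\twoheadrightarrow R$. Choose $m_i\in M$ with $\sum_i\pi_i(m_i)=1$; since the maximal ideal $\m$ is proper, at least one $\pi_i(m_i)$ must be a unit, which makes the corresponding $\pi_i\colon M\to R$ surjective, hence split (as $R$ is free). This exhibits $R$ as a direct summand of $M$, contradicting the hypothesis. Therefore $R\in\add\{R,M\}\setminus\add M$. The only real obstacle in the whole argument is keeping rigidity sharp enough in (1); since $\Ext^1$ is additive in both variables this is immediate, and note in particular that no Krull--Schmidt assumption is needed for (3) because the trace-style argument above bypasses it entirely.
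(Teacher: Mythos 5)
Your proof is correct; part (1) is essentially identical to the paper's argument (Lemma \ref{3}(1) plus splitting via $\Ext_R^1(M^{\oplus b},M^{\oplus a})=0$), but parts (2) and (3) take genuinely different, equally valid routes. For (2), the paper observes that over a Gorenstein ring the module $R\oplus M$ is itself rigid (since $\Ext_R^1(M,R)=0$ for $M$ maximal Cohen--Macaulay and $\Ext_R^1(R,-)=0$) and simply reapplies part (1) to $R\oplus M$; you instead first upgrade (1) to the equality $\ext M=\add M$ and then invoke Lemma \ref{3}(3). Both use the Gorenstein hypothesis in the same essential place ($\Ext_R^1(M,R)=0$ is exactly what makes the relevant extensions by free modules split), but the paper's route is shorter and avoids the intermediate identification of $\add M$ with $\ext M$, while yours has the small virtue of exercising Lemma \ref{3}(3), which the paper needs elsewhere anyway. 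For (3), the paper deduces from \cite[Corollaries 1.10 and 1.15]{LW} that $R$ being a summand of $M^{\oplus n}$ forces $R$ to be a summand of $M$; your ``unit trick'' (some component $\pi_i$ of the split surjection $M^{\oplus n}\twoheadrightarrow R$ must hit a unit, hence is a split surjection itself) proves the same implication directly and elementarily, bypassing the cited results and, as you note, any Krull--Schmidt considerations. All steps check out.
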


\begin{proof}
(1) By definition, $\add M$ is closed under direct summands.
Let $0\to X\to Y\to Z\to0$ be an exact sequence of maximal Cohen--Macaulay $R$-modules, and suppose that $X$ and $Z$ are in $\add M$.
By Lemma \ref{3}(1) there is an exact sequence $\sigma:0\to M^{\oplus a}\to W\to M^{\oplus b}\to0$ in $\mod R$ with $a,b\in\N$ such that $Y$ is a direct summand of $W$.
Since $M$ is rigid, $\Ext_R^1(M^{\oplus b},M^{\oplus a})=0$.
Hence the short exact sequence $\sigma$ splits, and $W$ is isomorphic to $M^{\oplus(a+b)}$.
Therefore, $Y$ belongs to $\add M$.
It follows that $\add M$ is extension-closed.

(2) If $R$ is Gorenstein, then the maximal Cohen--Macaulay $R$-module $R\oplus M$ is rigid.
It follows from (1) that $\add\{R,M\}=\add(R\oplus M)$ is an extension-closed subcategory of $\cm(R)$.

(3) Assume $\add M=\add\{R,M\}$.
Then $R$ is a direct summand of a finite direct sum of copies of $M$.
By \cite[Corollaries 1.10 and 1.15]{LW}, we see that $R$ is a direct summand of $M$.
Thus the assertion follows.
\end{proof}

The following lemma will be used in the case where $S$ is a regular local ring.

\begin{lem}\label{1}
Let $S$ be a local ring.
Let $x,y$ be a regular sequence on $S$.
Take the quotient ring $R=S/(xy)$.
Then the $R$-modules $R/(x)$ and $R/(y)$ are rigid. 
\end{lem}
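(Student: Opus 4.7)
The plan is to construct an explicit $2$-periodic minimal free resolution of $R/(x)$ over $R$ and then to compute $\Ext_R^1(R/(x),R/(x))$ from it. Rigidity of $R/(y)$ will then follow by exchanging the roles of $x$ and $y$; in a local noetherian ring any regular sequence can be permuted, so $y,x$ is also a regular sequence on $S$.

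The first step is to establish the annihilator identities $(0:_R x)=(y)$ and $(0:_R y)=(x)$ in $R=S/(xy)$. The inclusions $\supseteq$ are clear from the relation $xy=0$. For the reverse inclusion of the first identity, I would lift an element $r\in(0:_R x)$ to some $\tilde r\in S$, observe that $x\tilde r\in(xy)S$, write $x\tilde r=xys$, and cancel $x$ using that it is a nonzerodivisor on $S$. The second identity is handled analogously, by reducing modulo $x$ after the lift and invoking that $y$ is a nonzerodivisor on $S/(x)$.

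These identities assemble the exact complex
$$\cdots\xrightarrow{x}R\xrightarrow{y}R\xrightarrow{x}R\to R/(x)\to0,$$
which is then a minimal free resolution of $R/(x)$. Applying $\Hom_R(-,R/(x))$ to its truncation produces
$$R/(x)\xrightarrow{x}R/(x)\xrightarrow{y}R/(x)\xrightarrow{x}R/(x)\xrightarrow{y}\cdots,$$
with differentials alternating between multiplication by $x$ and by $y$. Since $x$ acts as zero on $R/(x)$, one reads off
$$\Ext_R^1(R/(x),R/(x))=\ker\bigl(y\cdot\colon R/(x)\to R/(x)\bigr).$$

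For the last step, I would identify $R/(x)$ with $S/(x)$ as $R$-modules (since $(xy)\subseteq(x)$ in $S$) and invoke once more that $y$ is a nonzerodivisor on $S/(x)$. This forces the kernel above to vanish, giving $\Ext_R^1(R/(x),R/(x))=0$; rigidity of $R/(y)$ follows symmetrically. There is no serious obstacle: the argument is essentially a bookkeeping exercise that makes careful use of each of the two defining properties of the regular sequence $x,y$ (and of its permutation $y,x$) in the correct places.
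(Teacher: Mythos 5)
Your proposal is correct and follows essentially the same route as the paper: both establish the annihilator identities $(0:_Rx)=(y)$ and $(0:_Ry)=(x)$ from the regularity hypotheses, use them to build the $2$-periodic free resolution of $R/(x)$, compute $\Ext_R^1(R/(x),R/(x))$ as the kernel of multiplication by $y$ on $R/(x)\cong S/(x)$, and conclude by the regularity of $y$ on $S/(x)$, with $R/(y)$ handled by symmetry. Your explicit remark that regular sequences over a Noetherian local ring are permutable is a welcome bit of extra care that the paper leaves implicit.
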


\begin{proof}
Using the fact that the elements $x$ and $y$ are regular on $S$, we observe that the equalities $0:_Rx=yR$ and $0:_Ry=xR$ hold true.
This implies that the sequence $\cdots\xrightarrow{y}R\xrightarrow{x}R\xrightarrow{y}R\xrightarrow{x}R\to R/xR\to0$ is exact.
We have $\Ext_R^1(R/xR,R/xR)=\h^1(0\to R/xR\xrightarrow{0}R/xR\xrightarrow{y}R/xR\xrightarrow{0}R/xR\xrightarrow{y}\cdots)=0$, since $y$ is regular on $S/xS=R/xR$.
Hence $R/xR$ is a rigid $R$-module.
By symmetry, the $R$-module $R/yR$ is also rigid.
\end{proof}

Here we recall the definitions of the stable category and cosyzygies of maximal Cohen--Macaulay modules.

\begin{dfn}
\begin{enumerate}[(1)]
\item
Let $R$ be a Cohen--Macaulay local ring.
Let $\lcm(R)$ be the {\em stable category} of $\cm(R)$; the objects of $\lcm(R)$ are the same as those of $\cm(R)$, and the hom-set from $M$ to $N$ is given by
$$
\Hom_{\lcm(R)}(M,N)=\Hom_R(M,N)/\{f\in\Hom_R(M,N)\mid\text{$f$ factors through a free $R$-module}\}.
$$
\item
Let $R$ be a Gorenstein local ring.
Let $M$ be a maximal Cohen--Macaulay $R$-module.
We define the {\em (first) cosyzygy} $\syz^{-1}M$ of $M$ as $(\syz(M^\ast))^\ast$, where $(-)^\ast=\Hom_R(-,R)$.
Note then that $\syz^{-1}M$ is maximal Cohen--Macaulay.
For $n\ge2$, we define the {\em $n$th cosyzygy} $\syz^{-n}M$ of $M$ inductively by $\syz^{-n}M=\syz^{-1}(\syz^{-(n-1)}M)$.
Note that $\syz^{-n}M=0$ for every integer $n>0$ when $M$ is a free $R$-module.
\end{enumerate}
\end{dfn}

\begin{rem}
\begin{enumerate}[(1)]
\item
Let $R$ be a Cohen--Macaulay local ring.
For each integer $n\ge0$, taking the $n$th syzygy defines an additive functor $\syz^n:\lcm(R)\to\lcm(R)$.
\item
Let $R$ be a Gorenstein local ring.
For each $n>0$, taking the $n$th cosyzygy defines an additive functor $\syz^{-n}:\lcm(R)\to\lcm(R)$.
The functors $\syz^n,\syz^{-n}:\lcm(R)\to\lcm(R)$ are mutually quasi-inverse equivalences.
The stable category $\lcm(R)$ is a triangulated category, and the assignment $M\mapsto M$ gives a triangle equivalence $\lcm(R)\to\ds(R)$.
For the details, we refer the reader to \cite[Theorem 4.4.1]{B}.
\end{enumerate}
\end{rem}

The lemma below follows from \cite[Proposition (3.11)]{Y}.
For the definition and fundamental properties of the Auslander--Reiten translation functor, we refer the reader to \cite[Chapters 3 and 5]{Y}.

\begin{lem}\label{6}
Let $R$ be a Gorenstein local ring of dimension $d$.
Let $\tau:\lcm(R)\to\lcm(R)$ stand for the Auslander--Reiten translation functor.
Then $\tau\cong\syz^{2-d}$.
\end{lem}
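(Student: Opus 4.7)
The plan is to derive the stated formula from \cite[Proposition~(3.11)]{Y}, which expresses the Auslander--Reiten translate of any Cohen--Macaulay local ring $R$ of dimension $d$ with canonical module $\omega_R$ on $\lcm(R)$ as
$$\tau M\cong\Hom_R(\syz^d\,\mathrm{Tr}\,M,\,\omega_R),$$
where $\mathrm{Tr}$ denotes the Auslander transpose. Under the Gorenstein hypothesis one has $\omega_R\cong R$, so this formula specializes to $\tau M\cong(\syz^d\,\mathrm{Tr}\,M)^{\ast}$, where $(-)^{\ast}=\Hom_R(-,R)$.

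The first step is to rewrite the transpose as a cosyzygy of $M^{\ast}$. A minimal free presentation $P_1\to P_0\to M\to 0$ produces the four--term exact sequence
$$0\to M^{\ast}\to P_0^{\ast}\to P_1^{\ast}\to\mathrm{Tr}\,M\to 0,$$
which shows $\syz^2(\mathrm{Tr}\,M)\cong M^{\ast}$ modulo free summands; equivalently, $\mathrm{Tr}\,M\cong\syz^{-2}(M^{\ast})$ in $\lcm(R)$. Applying $\syz^d$ gives $\syz^d\,\mathrm{Tr}\,M\cong\syz^{d-2}(M^{\ast})$.

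The second step is to dualize. The defining relation $\syz^{-1}N=(\syz(N^{\ast}))^{\ast}$ recorded in the excerpt, together with the double--dual isomorphism $M^{\ast\ast}\cong M$ in $\lcm(R)$ (valid because $R$ is Gorenstein and $M$ is maximal Cohen--Macaulay), yields the stable identification $(\syz N)^{\ast}\cong\syz^{-1}(N^{\ast})$, and hence by iteration $(\syz^n N)^{\ast}\cong\syz^{-n}(N^{\ast})$ for every $n\in\Z$. Applying this to $N=M^{\ast}$ and $n=d-2$:
$$\tau M\cong\bigl(\syz^{d-2}(M^{\ast})\bigr)^{\ast}\cong\syz^{2-d}(M^{\ast\ast})\cong\syz^{2-d}(M),$$
as desired; naturality in $M$ is inherited from each ingredient.

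Because the lemma is advertised as a direct consequence of Yoshino's proposition, I do not expect a genuine obstacle. The only point requiring some care is bookkeeping: verifying that the three identifications $\mathrm{Tr}\,M\cong\syz^{-2}(M^{\ast})$, $M^{\ast\ast}\cong M$, and $(\syz^n N)^{\ast}\cong\syz^{-n}(N^{\ast})$ all pass to $\lcm(R)$ after free summands are declared zero, which is standard for MCM modules over a Gorenstein ring.
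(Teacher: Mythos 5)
Your derivation is correct and follows exactly the route the paper intends: the paper offers no written proof beyond the single remark that the lemma ``follows from [Y, Proposition (3.11)]'', and your argument is precisely the standard unwinding of that formula in the Gorenstein case, via $\omega_R\cong R$, the stable identification $\mathrm{Tr}\,M\cong\syz^{-2}(M^{\ast})$, and the compatibility $(\syz^nN)^{\ast}\cong\syz^{-n}(N^{\ast})$. Nothing further is needed.
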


Next we introduce the syzygy of a subcategory of modules.

\begin{dfn}
Let $R$ be a local ring.
For a subcategory $\X$ of $\mod R$ we denote by $\syz\X$ the subcategory of $\mod R$ consisting of $R$-modules $Y$ such that there exists an exact sequence $0\to Y\to P\to X\to0$ in $\mod R$ with $P\in\add R$ and $X\in\X$.
Note that $\syz\X$ necessarily contains $\add R$.
\end{dfn}

The lemma below describes commutativity of the syzygy and extension closure of a subcategory of modules.

\begin{lem}\label{10}
Let $R$ be a Gorenstein local ring.
Let $\X$ be a subcategory of $\mod R$ contained in $\cm(R)$.
Then there is an equality $\ext(\syz\X)=\add\syz(\ext\X)$.
\end{lem}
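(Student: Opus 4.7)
The plan is to prove the two inclusions $\add\syz(\ext\X)\subseteq\ext(\syz\X)$ and $\ext(\syz\X)\subseteq\add\syz(\ext\X)$ separately.

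\emph{First inclusion.} Every extension-closed subcategory is closed under finite direct sums (via split conflations) and under direct summands, so $\ext(\syz\X)$ is additively closed, and it suffices to show $\syz(\ext\X)\subseteq\ext(\syz\X)$. Imitating the proof of Lemma \ref{4}(1), I would introduce
$$
\Y=\{V\in\mod R\mid \syz V\in\ext(\syz\X)\}
$$
and verify three things: (i) $\X\subseteq\Y$, because $\syz X\in\syz\X\subseteq\ext(\syz\X)$ for every $X\in\X$; (ii) $\Y$ is closed under direct summands, since minimal syzygies distribute over direct sums; (iii) $\Y$ is extension-closed, because the horseshoe lemma yields, from any $0\to Z_1\to Z\to Z_2\to0$ with $Z_1,Z_2\in\Y$, an exact sequence $0\to\syz Z_1\to\syz Z\oplus R^{\oplus n}\to\syz Z_2\to0$ whose middle term lies in $\ext(\syz\X)$ by extension-closedness, whence $\syz Z\in\ext(\syz\X)$ by direct summand closure. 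Hence $\ext\X\subseteq\Y$. For a general $W\in\syz(\ext\X)$ sitting in $0\to W\to P\to V\to0$ with $V\in\ext\X$ and $P\in\add R$, Schanuel's lemma yields $W\oplus P_0\cong\syz V\oplus P$; combining $\syz V\in\ext(\syz\X)$ with $P,P_0\in\add R\subseteq\syz\X\subseteq\ext(\syz\X)$ and closure under sums and summands then forces $W\in\ext(\syz\X)$.

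\emph{Second inclusion.} It suffices to show that $\add\syz(\ext\X)$ is an extension-closed subcategory containing $\syz\X$. The containment is immediate, since any $Y\in\syz\X$ already lies in $\syz(\ext\X)$ via $\X\subseteq\ext\X$. For extension-closedness, I would start from an exact sequence $0\to A\to B\to C\to0$ with $A,C\in\add\syz(\ext\X)$ and apply Lemma \ref{3}(1) to the subcategory $\syz(\ext\X)$ to obtain an exact sequence $0\to A'\to B'\to C'\to0$ in which $A',C'$ are finite direct sums of modules in $\syz(\ext\X)$ (hence themselves in $\syz(\ext\X)$, since $\ext\X$ is closed under finite direct sums) and $B$ is a direct summand of $B'$. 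Fixing exact sequences $0\to A'\to P\to X\to0$ and $0\to C'\to Q\to Y\to0$ with $P,Q\in\add R$ and $X,Y\in\ext\X$ then sets up the key construction.

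\emph{The pushout step.} Forming the pushout of $A'\hookrightarrow B'$ along $A'\hookrightarrow P$ produces a commutative diagram whose rows are $0\to A'\to B'\to C'\to0$ and $0\to P\to B''\to C'\to0$, and in which $B'\hookrightarrow B''$ has cokernel $X$. Since $R$ is Gorenstein and $C'$ is maximal Cohen--Macaulay, $\Ext_R^1(C',P)=0$ forces the lower row to split, so $B''\cong P\oplus C'$. Composing the resulting inclusion $B'\hookrightarrow P\oplus C'$ with $P\oplus C'\hookrightarrow P\oplus Q$ (induced by $C'\hookrightarrow Q$) yields an exact sequence $0\to B'\to P\oplus Q\to Z\to0$ in which $Z$ fits into $0\to X\to Z\to Y\to0$ via a snake/nine-lemma computation. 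Therefore $Z\in\ext\X$, so $B'\in\syz(\ext\X)$ and hence $B\in\add\syz(\ext\X)$. I expect the main obstacle to be executing this pushout-split-compose construction cleanly and confirming that $Z$ is indeed the advertised extension of $Y$ by $X$; the Gorenstein hypothesis enters essentially only through the vanishing $\Ext_R^1(C',P)=0$.
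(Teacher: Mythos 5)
Your proposal is correct and follows essentially the same route as the paper: one inclusion by showing $\add\syz(\ext\X)$ is extension-closed via Lemma \ref{3}(1) together with the pushout--split--compose construction (where the Gorenstein hypothesis kills $\Ext_R^1(C',P)$), and the other by pulling back along $\syz$ with a horseshoe-lemma argument and a Schanuel adjustment. The only cosmetic difference is that you specialize the second half to $\Y=\ext(\syz\X)$ rather than an arbitrary extension-closed $\Y\supseteq\syz\X$, which is logically equivalent here.
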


\begin{proof}
We have that $\add\syz(\ext\X)$ contains $\syz\X$ and is closed under direct summands.
Let $0\to L\to M\to N\to0$ be an exact sequence in $\mod R$ with $L,N\in\add\syz(\ext\X)$.
Lemma \ref{3}(1) implies that there exists an exact sequence $0\to A\to B\to C\to0$ in $\mod R$ with $A,C\in\syz(\ext\X)$ such that $M$ is a direct summand of $B$.
There exist exact sequences $0\to A\to F\to V\to0$ and $0\to C\to G\to W\to0$ with $F,G\in\add R$ and $V,W\in\ext\X$.
We have a pushout diagram as in the left below.
The middle row splits since $R$ is Gorenstein and $C$ is maximal Cohen--Macaulay.
Hence we get another pushout diagram as in the right below. 
$$
\xymatrix@R-1pc@C-1pc{
&0\ar[d]&0\ar[d]\\
0\ar[r]& A\ar[r]\ar[d]& B\ar[r]\ar[d]& C\ar[r]\ar@{=}[d]& 0\\
0\ar[r]&F\ar[r]\ar[d]& D\ar[r]\ar[d]& C\ar[r]&0\\
&V\ar@{=}[r]\ar[d]&V\ar[d]\\
&0&0
}\qquad\qquad
\xymatrix@R-1pc@C-1pc{
&&0\ar[d]&0\ar[d]\\
0\ar[r]&B\ar[r]\ar@{=}[d]&F\oplus C\ar[r]\ar[d]&V\ar[r]\ar[d]&0\\
0\ar[r]&B\ar[r]&F\oplus G\ar[d]\ar[r]&E\ar[r]\ar[d]&0\\
&&W\ar[d]\ar@{=}[r]&W\ar[d]\\
&&0&0}
$$
The right column in the right diagram shows that $E$ is in $\ext\X$, and then the middle row in the same diagram shows that $B$ is in $\syz(\ext\X)$.
Hence $M$ is in $\add\syz(\ext\X)$, and therefore, $\add\syz(\ext\X)$ is extension-closed.

It remains to prove that if $\Y$ is an extension-closed subcategory of $\mod R$ containing $\syz\X$, then $\Y$ contains $\add\syz(\ext\X)$.
Let $\ZZ$ be the subcategory of $\mod R$ consisting of $R$-modules $Z$ with $\syz Z\in\Y$.
Then $\ZZ$ contains $\X$.
If an $R$-module $Z$ is in $\ZZ$ and $W$ is a direct summand of $Z$, then $\syz W$ is a direct summand of $\syz Z$, which belongs to $\Y$, and so does $\syz W$ since $\Y$ is closed under direct summands.
Hence $\ZZ$ is closed under direct summands.
Let $0\to A\to B\to C\to0$ be an exact sequence with $A,C\in\ZZ$.
Then there is an exact sequence $0\to\syz A\to\syz B\oplus F\to\syz C\to0$ with $F\in\add R$ and $\syz A,\syz C\in\Y$.
As $\Y$ is extension-closed, $\syz B$ belongs to $\Y$, and hence $B$ is in $\ZZ$.
It follows that $\ZZ$ is extension-closed.
We now observe that $\ZZ$ contains $\ext\X$, which implies $\syz(\ext\X)\subseteq\Y$.
As $\Y$ is extension-closed, we get the desired inclusion $\add\syz(\ext\X)\subseteq\Y$.
\end{proof}

\section{On the $\a_n,\d_n,\e_6,\e_7,\e_8$-singularities with dimension at most two}

In this section, we give a complete classification of the extension-closed subcategories of $\cm(R)$ in the case where $R$ is a $P$-singularity with $P\in\{\a_n,\d_n,\e_6,\e_7,\e_8\}$ and has dimension at most two.

\begin{thm}\label{22}
Let $R=k[\![x,y]\!]/(x^2+y^{n+1})$ be the $\a_n^1$-singularity, where $n$ is a positive integer.
When $n$ is odd, put $N_\pm=R/(y^{(n+1)/2}\pm\sqrt{-1}\,x)$.
The Hasse diagram of the poset $\E(R)$ is
$$
\xymatrix@R-1pc@C-1pc{
\cm(R)\ar@{-}[d]\\
\add R\ar@{-}[d]&\text{if $n$ is even, and}\\
0
}
\qquad
\xymatrix@R-1pc@C-1pc{
\cm(R)\ar@{-}[d]\ar@{-}[rd]\\
\add\{R,N_+\}\ar@{-}[d]\ar@{-}[rd]&\add\{R,N_-\}\ar@{-}[rd]\ar@{-}[d]&&\text{if $n$ is odd.}
\\
\add N_+\ar@{-}[rd]&\add R\ar@{-}[d]&\add N_-\ar@{-}[ld]\\
&0
}
$$
\end{thm}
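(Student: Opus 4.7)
The plan is to combine Lemmas~\ref{1} and~\ref{2} on rigidity, Lemma~\ref{4} on syzygies of extension closures, and the classification of resolving subcategories of $\mod R$ contained in $\cm(R)$ that was recalled in the introduction; for our one-dimensional hypersurface with isolated singularity, there are only two such resolving subcategories, namely $\add R$ and $\cm(R)$.

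I first list the indecomposable MCMs: the free module $R$, the ideals $M_j=(x,y^j)$ for $1\le j<(n+1)/2$, and, when $n$ is odd with $m=(n+1)/2$, two further rank-one modules $N_\pm$. For $n$ odd, set $u=y^m+\sqrt{-1}\,x$ and $v=y^m-\sqrt{-1}\,x$; then $uv=x^2+y^{n+1}$, the pair $u,v$ is a regular sequence in $k[\![x,y]\!]$, $R=k[\![x,y]\!]/(uv)$, and $N_+\cong R/(u)$, $N_-\cong R/(v)$. Lemma~\ref{1} then gives the rigidity of $N_\pm$, and Lemma~\ref{2} yields that $\add N_\pm$ and $\add\{R,N_\pm\}$ are extension-closed; together with $0$, $\add R$ and $\cm(R)$ these are the candidate subcategories appearing in the diagrams, and the strict inclusions between them follow from Lemma~\ref{2}(3) and comparison of indecomposable summands.

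The crux is the identity $\syz M_j\cong M_j$ for $1\le j<(n+1)/2$. I would derive it from the matrix factorization $\bigl(\begin{smallmatrix}y^j&x\\-x&y^{n+1-j}\end{smallmatrix}\bigr)$ of $M_j$, whose adjugate is a matrix factorization of the ideal $(x,y^{n+1-j})$, combined with the explicit $R$-module isomorphism $(x,y^j)\cong(x,y^{n+1-j})$ given by $ax+by^j\mapsto bx-ay^{n+1-j}$. The resulting short exact sequence $0\to\syz M_j\to R^2\to M_j\to0$ places $R$ inside $\ext M_j$, so $\ext\{R,M_j\}\subseteq\ext M_j$. Applying Lemma~\ref{4}(1) to $R\oplus M_j$, whose syzygy is $M_j$, shows that every $N\in\ext\{R,M_j\}=\ext(R\oplus M_j)$ satisfies $\syz N\in\ext M_j\subseteq\ext\{R,M_j\}$; thus $\ext\{R,M_j\}$ is closed under $\syz$ and, being automatically contained in $\cm(R)$, is a resolving subcategory of $\mod R$ contained in $\cm(R)$. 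By the classification recalled above only $\add R$ and $\cm(R)$ qualify, and the presence of the non-free $M_j$ forces $\ext\{R,M_j\}=\cm(R)$, whence $\ext M_j=\cm(R)$.

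The same strategy handles $\ext\{N_+,N_-\}$ when $n$ is odd: the canonical exact sequence $0\to N_-\to R\to N_+\to0$ coming from $uv=0$ (with $N_-\cong uR\subset R$) gives $R\in\ext\{N_+,N_-\}$, and since $\syz N_\pm\cong N_\mp$, the same application of Lemma~\ref{4}(1) to $N_+\oplus N_-$ yields syzygy-closedness, hence $\ext\{N_+,N_-\}=\cm(R)$. A case analysis on $\ind_{\cm(R)}\X$, according to whether $\X$ contains some $M_j$, contains both $N_+$ and $N_-$, or is confined to a subset of $\{R,N_+,N_-\}$, then shows that every $\X\in\E(R)$ coincides with one of the listed subcategories, after which the Hasse diagrams are read off directly. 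The main obstacle is the matrix-factorization computation establishing $\syz M_j\cong M_j$; without this self-identity, $\ext\{R,M_j\}$ need not be closed under $\syz$ and the two-term resolving classification cannot be invoked.
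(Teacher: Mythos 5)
Your proposal is correct, but it reaches the key identities $\ext M_j=\cm(R)$ and $\ext\{N_+,N_-\}=\cm(R)$ by a genuinely different route from the paper. The paper reads off, from Yoshino's Auslander--Reiten quiver data \cite[(5.12) and (9.9)]{Y}, explicit exact sequences $0\to M_j\to M_{j-1}\oplus M_{j+1}\to M_j\to0$, $0\to M_l\to M_{l-1}\oplus N_+\oplus N_-\to M_l\to0$ and $0\to N_\pm\to M_l\to N_\mp\to0$; these chain all the closures $\ext M_j$ together, show that they contain $M_0=R$ and $N_\pm$, and hence exhaust $\ind\cm(R)$, with no appeal to any classification theorem. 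You instead use the self-syzygy identity $\syz M_j\cong M_j$ (correct: the presentation matrix $\left(\begin{smallmatrix}x&y^j\\y^{n+1-j}&-x\end{smallmatrix}\right)$ squares to $x^2+y^{n+1}$ times the identity) to put $R$ inside $\ext M_j$, then apply Lemma \ref{4}(1) to $R\oplus M_j$ to see that $\ext\{R,M_j\}$ is closed under syzygies, conclude that it is a resolving subcategory contained in $\cm(R)$, and invoke the classification recalled in the introduction to force $\ext\{R,M_j\}=\cm(R)$; the same scheme, via $0\to N_-\to R\to N_+\to0$ and $\syz N_\pm\cong N_\mp$, handles $\ext\{N_+,N_-\}$. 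This works, but it trades the AR-sequence computations for two external inputs: the classification theorem of \cite{stcm,thd}, and the standard (but nowhere stated in this paper) fact that a subcategory containing $R$ and closed under direct summands, extensions and minimal syzygies is resolving --- you should cite or prove the latter (the usual pullback along a projective cover of the quotient does it). The remaining structure --- rigidity of $N_\pm$ via Lemmas \ref{1} and \ref{2}, and the case analysis on $\ind_{\cm(R)}\X$ according to whether $\X$ meets $\{M_1,\dots,M_l\}$ or contains both $N_+$ and $N_-$ --- coincides with the paper's. On balance the paper's argument is more elementary and uniform with the rest of Sections 3--4, while yours needs less of the AR quiver (only the list of indecomposables and their matrix factorizations) at the price of the heavier classification machinery.
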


\begin{proof}
We begin with the case where $n$ is even.
Put $l=n/2$, $I_0=R$ and $I_j=(x,y^j)R$ for each $1\le j\le l$.
By \cite[(5.12)]{Y} we have $\ind\cm(R)=\{I_0,I_1,\dots,I_l\}$ and there exist exact sequences $0\to I_j\to I_{j-1}\oplus I_{j+1}\to I_j\to0$ for each $1\le j\le l-1$, and $0\to I_l\to I_{l-1}\oplus I_l\to I_l\to0$.
Hence $R=I_0\in\ext I_1=\ext I_2=\cdots=\ext I_l$.
It is observed that $\ext I_j=\cm(R)$ for all $1\le j\le l$.
We obtain the equality $\E(R)=\{0,\add R,\cm(R)\}$.

Next we deal with the case where $n$ is odd.
Put $l=(n-1)/2$, $N_\pm=R/(y^{l+1}\pm\sqrt{-1}\,x)\cong(y^{l+1}\mp\sqrt{-1}\,x)R$, $M_0=R$ and $M_j=\cok\left(\begin{smallmatrix}x&y^j\\y^{n+1-j}&-x\end{smallmatrix}\right)\cong(x,y^j)R$ for each integer $1\le j\le l$.
Using \cite[(9.9)]{Y}, we have that $\ind\cm(R)=\{M_0,M_1,\dots,M_l,N_+,N_-\}$ and there exist exact sequences
$$
\begin{array}{l}
0\to M_j\to M_{j-1}\oplus M_{j+1}\to M_j\to0\text{ for each $1\le j\le l-1$,}\\
0\to M_l\to M_{l-1}\oplus N_+\oplus N_-\to M_l\to0\text{ and }0\to N_\pm\to M_l\to N_\mp\to0.
\end{array}
$$
Hence $R=M_0\in\ext M_1=\ext M_2=\cdots=\ext M_l\ni N_\pm$ and $M_l\in\ext\{N_+,N_-\}$.
We get $\ext M_1=\ext M_2=\cdots=\ext M_l=\ext\{N_+,N_-\}=\cm(R)$.
Applying Lemma \ref{1}, we see that $N_+,N_-$ are rigid $R$-modules.
By Lemma \ref{2}, we conclude that $\E(R)=\{0,\add R,\add N_+,\add N_-,\add\{R,N_+\},\add\{R,N_-\},\cm(R)\}$.
\end{proof}

\begin{thm}\label{23}
Let $R=k[\![x,y]\!]/(x^3+y^4)$ be the $\e_6^1$-singularity.
The Hasse diagram of $\E(R)$ is the following.
$$
\xymatrix@R-1pc@C-1pc{
\cm(R)\ar@{-}[d]\\
\add R\ar@{-}[d]\\
0
}
$$
\end{thm}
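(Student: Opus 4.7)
The plan is to follow the even-$A_n$ case of the proof of Theorem \ref{22}: show $\ext M=\cm(R)$ for every non-free indecomposable maximal Cohen--Macaulay $R$-module $M$. Combined with Lemma \ref{2}(1) (which makes $\add R$ extension-closed, since $R$ itself is rigid) and the triviality of the zero subcategory, this forces $\E(R)=\{0,\add R,\cm(R)\}$, matching the claimed three-vertex chain.

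I would invoke \cite[Chapter 9]{Y}, which enumerates the finitely many indecomposable maximal Cohen--Macaulay $R$-modules (call them $R,M_1,\dots,M_n$) together with explicit matrix factorizations and their almost split sequences
$$
0\to\tau M_i\to E_i\to M_i\to 0\qquad(i=1,\dots,n),
$$
where $E_i$ is the direct sum of the indecomposables adjacent to $M_i$ in the AR quiver of $\cm(R)$. The AR quiver is connected, and Lemma \ref{6} combined with the hypersurface identity $\syz^2\cong\mathrm{id}$ on non-free indecomposables identifies $\tau$ with $\syz$ and makes $\tau$ an involution on $\{M_1,\dots,M_n\}$; the explicit description in \cite[Chapter 9]{Y} shows $\tau$ has at least one fixed point.

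First I would dispose of any $\tau$-fixed indecomposable $M$: its almost split sequence takes the symmetric form $0\to M\to E_M\to M\to 0$, so $E_M\in\ext M$ and every AR-neighbor of $M$ enters $\ext M$ by closure under direct summands. Since $\tau$ stabilises the set of AR-neighbors of $M$ (as an involution fixing $M$), these neighbors form a union of $\tau$-orbits; in particular, for each non-free neighbor $N$ both $N$ and $\tau N$ lie in $\ext M$ after this step, so the AR sequence at $N$ can be applied and its middle term joins $\ext M$. Iterating along the connected AR quiver places every non-free indecomposable in $\ext M$, and the AR sequence attached to some non-free indecomposable adjacent to $R$ has $R$ as a direct summand of its middle term, yielding $R\in\ext M$. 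Thus $\ext M=\cm(R)$ for any $\tau$-fixed $M$. Lemma \ref{4}(2) with $d=1$ then propagates $\ext\tau^iM=\cm(R)$ to every $i\ge 0$, reducing the remaining task to one representative of each non-trivial $\tau$-orbit.

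The main obstacle is producing a $\tau$-fixed indecomposable $N\in\ext M$ for each non-fixed $M$; once this is in hand, the preceding step gives $\ext M\supseteq\ext N=\cm(R)$. I plan to extract $N$ from the matrix factorizations of $x^3+y^4$ in \cite[Chapter 9]{Y} by exhibiting a non-split self-extension $0\to M\to X\to M\to 0$ (such a sequence must exist because otherwise $M$ would be rigid and Lemma \ref{2}(1) would contradict the asserted count of extension-closed subcategories) whose middle term $X$ has a $\tau$-fixed summand. Carrying out this direct computation on the explicit matrix factorizations, for each non-trivial $\tau$-orbit, is the only non-routine ingredient; once it is in place, the argument closes via the propagation above.
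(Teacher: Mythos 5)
Your overall strategy (show $\ext M=\cm(R)$ for every nonfree indecomposable $M$, then conclude $\E(R)=\{0,\add R,\cm(R)\}$) is the right one, and your treatment of the $\tau$-fixed indecomposables is essentially what the paper does: for the $\e_6^1$-singularity the fixed points are $M_2$ and $X$, whose AR sequences $0\to M_2\to X\to M_2\to0$ and $0\to X\to M_2\oplus A\oplus B\to X\to0$ do propagate along the connected quiver exactly as you describe. The problem is the second half, which is where the actual content of the proof lies. For a non-$\tau$-fixed $M$ you need to place some $\tau$-fixed indecomposable inside $\ext M$, and you propose to do this by exhibiting a non-split self-extension $0\to M\to Y\to M\to0$ with a $\tau$-fixed summand in $Y$. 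Two things go wrong here. First, your argument that such a sequence exists is circular: you infer non-rigidity of $M$ from ``the asserted count of extension-closed subcategories,'' which is precisely the statement being proved. Second, even granting non-rigidity, there is no a priori reason that the middle term of \emph{some} non-split self-extension contains a $\tau$-fixed summand; this must be exhibited, and you defer it to an unperformed matrix-factorization computation. That computation is not routine and is exactly what the paper supplies by a different device: it chains the AR sequences $b,c,d,f$ (resp.\ $b,c,e,f$) into grids of exact squares and composes them via Lemma \ref{5} to manufacture the explicit self-extensions $0\to A\to M_2\oplus A\oplus R\to A\to0$ and $0\to M_1\to M_2\oplus R\to M_1\to0$, which put the $\tau$-fixed module $M_2$ into $\ext A$ and $\ext M_1$.

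Two further remarks. The paper also short-circuits half of the orbit analysis by identifying $N_1\cong\m$ and $B\cong\m^2$ and invoking \cite[Corollary 2.7]{stcm} and \cite[Theorem 5.5]{serre} to get $\ext N_1=\ext B=\cm(R)$ directly; your Lemma \ref{4}(2) propagation within $\tau$-orbits ($\tau\cong\syz$ in dimension one) is a legitimate substitute for this, so that part is fine. But as written, your proposal establishes the theorem only for the two $\tau$-fixed modules and leaves the orbits $\{A,B\}$ and $\{M_1,N_1\}$ resting on a computation you have not done and an existence claim you have not justified non-circularly; until the analogue of the paper's exact-square compositions is carried out, the proof is incomplete.
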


\begin{proof}
By virtue of \cite[(9.13)]{Y}, we have that $\ind\cm(R)=\{R,M_1,M_2,N_1,A,B,X\}$ with $N_1\cong\m:=(x,y)R$, $B\cong\m^2$, and there exist exact sequences
$$
\begin{array}{l}
a:0\to M_2\to X\to M_2\to0,\quad
b:0\to X\to M_2\oplus A\oplus B\to X\to0,\quad
c:0\to A\to X\oplus N_1\to B\to0,\\
d:0\to B\to X\oplus M_1\to A\to0,\quad
e:0\to M_1\to A\to N_1\to0,\quad
f:0\to N_1\to B\oplus R\to M_1\to0
\end{array}
$$
such that the maps $C\to D$ with $C,D\in\ind\cm(R)$ appearing in different exact sequences (e.g., the maps $X\to M_2$ in $a,b$) are the same.
It follows from \cite[Corollary 2.7]{stcm} that $\ext N_1=\cm(R)$.
The associated graded ring $\gr_\m R$ is isomorphic to $k[x,y]/(x^3)$, which has depth one.
We see from \cite[Theorem 5.5]{serre} that $\ext B=\cm(R)$.
The exact sequences $a$ and $b$ show that $X\in\ext M_2$ and $B\in\ext X$, respectively.
Hence the equalities $\ext M_2=\ext X=\ext B=\cm(R)$ hold.
The exact sequences $b,c,d,f$ make exact squares as in the left below, while $b,c,e,f$ make exact squares as in the right below.
$$
\xymatrix@R-1pc@C2pc{
A\ar[r]\ar[d]& X\ar[r]\ar[d]& M_2\oplus A\ar[d]\\
N_1\ar[r]\ar[d]& B\ar[r]\ar[d]& X\ar[d]\\
R\ar[r]& M_1\ar[r]& A
}\qquad
\xymatrix@R-1pc@C2pc{
M_1\ar[r]\ar[d]& A\ar[r]\ar[d]& X\ar[r]\ar[d]& M_2\oplus A\ar[r]\ar[d]& M_2\oplus N_1\ar[r]\ar[d]& M_2\oplus R\ar[d]\\
0\ar[r]& N_1\ar[r]& B\ar[r]& X\ar[r]& B\ar[r]& M_1
}$$
In view of Lemma \ref{5}, there exist short exact sequences $0\to A\to M_2\oplus A\oplus R\to A\to0$ and $0\to M_1\to M_2\oplus R\to M_1\to0$, which show that $M_2\in\ext A\cap\ext M_1$.
It is seen that the equalities $\ext A=\ext M_1=\ext M_2=\cm(R)$ hold.
Now it is observed that $\E(R)=\{0,\add R,\cm(R)\}$.
\end{proof}

\begin{thm}\label{24}
Let $R=k[\![x,y]\!]/(x^3+y^5)$ be the $\e_8^1$-singularity.
The Hasse diagram of $\E(R)$ is the following.
$$
\xymatrix@R-1pc@C-1pc{
\cm(R)\ar@{-}[d]\\
\add R\ar@{-}[d]\\
0
}
$$
\end{thm}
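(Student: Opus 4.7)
The plan is to follow the template of Theorem~\ref{23} almost verbatim. First, I would invoke \cite{Y} for the explicit description of $\ind\cm(R)$ and of the AR sequences for the $\e_8^1$-singularity: apart from $R$ itself, there are eight non-free indecomposable maximal Cohen--Macaulay modules in bijection with the vertices of the $E_8$ Dynkin diagram, with AR sequences whose shape reflects the $E_8$ AR quiver. Among these, I would single out two distinguished modules $N$ and $B$ with $N\cong\m$ and $B\cong\m^2$ (they occur in the $E_8$ table for the same reason they occur in the $E_6$ table).

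The next step is to establish the two ``seed'' identities $\ext N=\cm(R)$ and $\ext B=\cm(R)$. The first follows from \cite[Corollary~2.7]{stcm} applied to $\m$, just as in Theorem~\ref{23}. For the second, observe that the associated graded ring $\gr_\m R\cong k[x,y]/(x^3)$ is the \emph{same} ring as the one that appears in the $\e_6^1$ case, hence still has depth one; so the same appeal to \cite[Theorem~5.5]{serre} yields $\ext\m^2=\cm(R)$.

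The third step is to show $\ext M=\cm(R)$ for every other non-free indecomposable $M$ by producing, for each such $M$, a conflation of the form $0\to M\to E_M\to M\to 0$ whose middle term $E_M$ contains $N$ or $B$ as a direct summand. For those $M$ with $\tau M\cong M$, the AR sequence ending at $M$ is already of this ``loop'' shape, and one reads $E_M$ directly from the Yoshino table. For those $M$ with $\tau M\not\cong M$, one composes the AR sequences joining $M$, $\tau M$, and their AR-neighbors via Lemma~\ref{5}, producing exact squares whose diagonals give the desired loop conflations — this is exactly the manoeuvre applied to the sequences $b,c,d,e,f$ at the end of Theorem~\ref{23}. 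Closure of $\ext M$ under direct summands then places $N$ or $B$ in $\ext M$, hence $\ext M=\cm(R)$.

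Once $\ext M=\cm(R)$ is known for every non-free indecomposable $M$, the classification is immediate. If $\X\in\E(R)$ is nonzero, closure under direct summands forces $\X$ to contain some nonzero indecomposable; either every such indecomposable is isomorphic to $R$ (and then $\X\subseteq\add R$), or $\X$ contains some non-free indecomposable $M$ and consequently $\X\supseteq\ext M=\cm(R)$. Noting that both $\add R$ and $\cm(R)$ are themselves extension-closed, we obtain $\E(R)=\{0,\add R,\cm(R)\}$, matching the claimed three-element chain. The principal obstacle to anticipate is the bookkeeping in step three: the $E_8$ AR quiver has more vertices and longer branches than that of $E_6$, so the family of exact squares to be assembled is correspondingly larger, though each individual exact square is no harder than its $E_6$ counterpart.
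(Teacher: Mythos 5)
Your plan follows the paper's proof of this theorem essentially verbatim: read off $\ind\cm(R)$ and the AR sequences from Yoshino, seed with $\m$ and $\m^2$ via \cite[Corollary 2.7]{stcm} and the results of \cite{serre}, then compose exact squares with Lemma \ref{5} to manufacture loop conflations $0\to M\to E_M\to M\to0$ for the remaining indecomposables, exactly as in Theorem \ref{23}. Two small corrections: the $\e_8^1$ curve singularity has sixteen non-free indecomposables (the bijection with the eight Dynkin vertices is the statement for the two-dimensional quotient singularity, not the curve), and the middle terms $E_M$ one actually obtains contain not literally $\m$ or $\m^2$ but other modules (e.g.\ $A_2$, $N_2$, $B_2$) whose extension closures must first be shown to be all of $\cm(R)$ --- the paper does this with \cite[Proposition 6.8]{serre} for six ideals at once and with Lemma \ref{4}(2) for their syzygies --- so the bootstrapping order matters, though the architecture is unchanged.
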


\begin{proof}
Put $U=\left(\begin{smallmatrix}y&-x&0\\0&y&-x\\x&0&y^3\end{smallmatrix}\right)$ and $V=\left(\begin{smallmatrix}y&-x&0\\0&y^2&-x\\x&0&y^2\end{smallmatrix}\right)$.
In view of \cite[(9.15)]{Y} and Lemma \ref{6}, we have that
$$
\ind\cm(R)=\{R,M_1,M_2,N_1,N_2,A_1,A_2,B_1,B_2,C_1,C_2,D_1,D_2,X_1,X_2,Y_1,Y_2\},
$$
where $A_1=\cok U$, $A_2=\cok V$, $M_1\cong(x^2,y)R$, $M_2\cong(x^2,y^2)R$, $N_1\cong\m:=(x,y)R$, $N_2\cong(x,y^2)R$, $N_i\cong\syz M_i$, $B_i\cong\syz A_i$, $D_i\cong\syz C_i$, $Y_i\cong\syz X_i$ for $i=1,2$, and there exist exact sequences
$$
\begin{array}{l}
0\to Y_2\to D_1\oplus X_1\to X_2\to0,\quad
0\to X_1\to X_2\oplus A_2\oplus C_2\to Y_1\to0,\quad
0\to D_1\to A_1\oplus X_2\to C_1\to0,\\
0\to X_2\to C_1\oplus Y_1\to Y_2\to0,\quad
0\to C_2\to Y_1\oplus N_2\to D_2\to0,\quad
0\to Y_1\to Y_2\oplus B_2\oplus D_2\to X_1\to0,\\
0\to C_1\to B_1\oplus Y_2\to D_1\to0,\quad
0\to B_1\to N_1\oplus D_1\to A_1\to0,\quad
0\to N_2\to D_2\to M_2\to0,\\
0\to D_2\to X_1\oplus M_2\to C_2\to0,
\end{array}
$$
where the maps $E\to F$ with $E,F\in\ind\cm(R)$ are all the same.
It is verified that $R\xleftarrow{\left(\begin{smallmatrix}xy&x^2&-y^2\end{smallmatrix}\right)}R^{\oplus3}\xleftarrow{U}R^{\oplus3}$ and $R\xleftarrow{\left(\begin{smallmatrix}xy^2&x^2&-y^3\end{smallmatrix}\right)}R^{\oplus3}\xleftarrow{V}R^{\oplus3}$ are exact sequences, so $A_1\cong\m^2$ and $A_2\cong(x^2,xy^2,y^3)R$.
For each $I\in\{M_1,M_2,N_1,N_2,A_1,A_2\}$, by \cite[Proposition 6.8]{serre} and completion $R/\m$ is in $\ext R/I$.
Lemma \ref{4}(1) (or \cite[Lemma 5.2(1)]{serre}) implies $\m\in\ext I$.
This gives $\ext I=\cm(R)$ by \cite[Corollary 2.7]{stcm}.
It follows from Lemma \ref{4}(2) that $\ext B_1=\ext B_2=\cm(R)$.
The above exact sequences make the following exact squares.
$$
\xymatrix@R-1pc@C-1pc{
Y_2\ar[r]\ar[d]& D_1\ar[r]\ar[d]& A_1\ar[d]\\
X_1\ar[r]\ar[d]& X_2\ar[r]\ar[d]& C_1\ar[d]\\
A_2\oplus C_2\ar[r]& Y_1\ar[r]& Y_2
}\qquad
\xymatrix@R-1pc@C-1pc{
X_1\ar[r]\ar[d]& X_2\ar[r]\ar[d]& C_1\oplus B_2\ar[d]\\
A_2\oplus C_2\ar[r]\ar[d]& Y_1\ar[r]\ar[d]& Y_2\oplus B_2\ar[d]\\
A_2\oplus N_2\ar[r]& D_2\ar[r]& X_1
}\qquad
\xymatrix@R-1pc@C-1pc{
C_1\ar[r]\ar[d]& B_1\ar[r]\ar[d]& N_1\ar[d]\\
Y_2\ar[r]\ar[d]& D_1\ar[r]\ar[d]& A_1\ar[d]\\
X_1\ar[r]& X_2\ar[r]& C_1
}\quad
\xymatrix@R-1pc@C-1pc{
C_2\ar[r]\ar[d]& Y_1\ar[r]\ar[d]& Y_2\oplus B_2\ar[d]\\
N_2\ar[r]\ar[d]& D_2\ar[r]\ar[d]& X_1\ar[d]\\
0\ar[r]& M_2\ar[r]& C_2
}$$
By Lemma \ref{5}, these exact squares produce the following four exact sequences:
$$
\begin{array}{l}
0\to Y_2\to A_1\oplus A_2\oplus C_2\to Y_2\to0,\quad
0\to X_1\to C_1\oplus B_2\oplus A_2\oplus N_2\to X_1\to0,\\
0\to C_1\to N_1\oplus X_1\to C_1\to0,\quad
0\to C_2\to Y_2\oplus B_2\to C_2\to0.
\end{array}
$$
Hence $A_2\in\ext Y_2\cap\ext X_1$, $X_1\in\ext C_1$ and $B_2\in\ext C_2$, and we get $\ext I=\cm(R)$ for $I\in\{Y_2,X_1,C_1,C_2\}$.
Lemma \ref{4}(2) implies $\ext I=\cm(R)$ for $I\in\{X_2,Y_1,D_1,D_2\}$.
Consequently, $\ext I=\cm(R)$ for all $I\in\{M_1,M_2,N_1,N_2,A_1,A_2,B_1,B_2,Y_2,X_1,C_1,C_2,X_2,Y_1,D_1,D_2\}$.
We obtain $\E(R)=\{0,\add R,\cm(R)\}$.
\end{proof}

\begin{thm}\label{25}
Let $R=k[\![x,y]\!]/(x^3+xy^3)$ be the $\e_7^1$-singularity.
The Hasse diagram of $\E(R)$ is the following.
$$
\xymatrix@R-1pc@C-1pc{
\cm(R)\ar@{-}[d]\ar@{-}[rd]\\
\add\{R,A,C\}\ar@{-}[d]&\add\{R,B,D\}\ar@{-}[d]\ar@{-}[rd]\\
\add\{R,A\}\ar@{-}[d]\ar@{-}[rd]&\add\{R,B\}\ar@{-}[d]\ar@{-}[rd]&\add\{B,D\}\ar@{-}[d]\\
\add A\ar@{-}[rd]&\add R\ar@{-}[d]&\add B\ar@{-}[ld]\\
&0
}
$$
Here, $A=R/(x)$, $B=\syz A=R/(x^2+y^3)$, $C=\cok\left(\begin{smallmatrix}x^2&xy\\xy^2&-x^2\end{smallmatrix}\right)$ and $D=\syz C=\cok\left(\begin{smallmatrix}x&y\\y^2&-x\end{smallmatrix}\right)$.
\end{thm}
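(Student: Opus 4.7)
The plan is to follow the pattern of Theorems \ref{22}--\ref{24}: list the indecomposable maximal Cohen--Macaulay modules, determine the extension closure generated by each, and then enumerate all possible extension-closed subcategories as finite unions of these ``orbits''. I would invoke \cite[(9.14)]{Y} to read off the complete list of indecomposables together with all Auslander--Reiten sequences. Among the non-free indecomposables, $A=R/(x)$, $B=R/(x^2+y^3)$, $C$ and $D$ are named in the statement; the remaining ones would be labeled explicitly, and I would note that $B=\syz A$ and $D=\syz C$.

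Since $x$ and $x^2+y^3$ form a regular sequence on $S=k[\![x,y]\!]$ with $R=S/(x(x^2+y^3))$, Lemma \ref{1} gives the rigidity of $A$ and $B$, so Lemma \ref{2} yields that $\add A$, $\add B$, $\add\{R,A\}$ and $\add\{R,B\}$ are extension-closed. For the remaining non-trivial ext-closed subcategories the strategy is to first pin down $\ext C$ using the matrix factorization $\phi=\left(\begin{smallmatrix}x^2&xy\\xy^2&-x^2\end{smallmatrix}\right)=x\phi'$ with $\phi'=\left(\begin{smallmatrix}x&y\\y^2&-x\end{smallmatrix}\right)$. The identity $\phi=x\phi'$ produces a short exact sequence $0\to A^{\oplus 2}\to C\to D\to 0$, and the minimal free presentation $0\to D\to R^{\oplus 2}\to C\to 0$ forces $D,R\in\ext C$, hence $\add\{R,A,C\}\subseteq\ext C$. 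For the reverse inclusion I would prove that $R\oplus A\oplus C$ is rigid by a direct cross-term $\Ext^1$ computation using the explicit $2$-periodic resolutions, and then invoke Lemma \ref{2}(1). Once $\ext C=\add\{R,A,C\}$ is established, Lemma \ref{10} transfers this to $\ext D=\add\{B,D\}$ via the syzygy correspondence (noting $\syz R=0$ so the free summand disappears), and Lemma \ref{3}(3) promotes $\ext D$ to $\ext\{R,D\}=\add\{R,B,D\}$.

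For each remaining non-free indecomposable $M$, I would show $\ext M=\cm(R)$ by imitating Theorem \ref{24}: chain the AR sequences via Lemma \ref{5} into exact squares that produce short exact sequences of the form $0\to M\to N\to M\to 0$ whose middle term $N$ is a direct sum of modules already known to generate $\cm(R)$, and then invoke \cite[Corollary 2.7]{stcm} together with Lemma \ref{4}(1) to reduce the generation of $\cm(R)$ to having the residue field in the extension closure. The ten subcategories in the Hasse diagram are then exactly the extension closures of unions of indecomposables, and the covering relations among them read off directly from the computations of $\ext M$ above.

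The main obstacle is the cross-term $\Ext^1$ vanishing needed to conclude that $R\oplus A\oplus C$ is rigid (so that $\ext C$ is \emph{equal to}, not just contained in, $\add\{R,A,C\}$); here the mixture of a free module, a cyclic quotient module and a rank-two matrix factorization module is delicate, and the identities $\phi\phi'=\phi'\phi=(x^3+xy^3)I$, $(\phi')^2=(x^2+y^3)I$, combined with $x(x^2+y^3)=0$ in $R$ and the short exact sequence $0\to A^{\oplus 2}\to C\to D\to 0$, should be the principal technical tools. A related subtlety is ruling out spurious ext-closed candidates such as $\add\{A,C\}$, $\add\{R,C\}$, $\add D$ or $\add\{R,D,B,\ldots\}$ beyond the ten listed, which becomes automatic once $R\in\ext C$ and $B\in\ext D$ are in hand, but must nonetheless be verified cleanly against every subset of indecomposables.
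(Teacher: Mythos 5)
Your overall architecture (pin down $\ext C$ and $\ext D$, show every other nonfree indecomposable generates $\cm(R)$, then enumerate) matches the paper, but the two steps you flag as the technical core of the $C$/$D$ analysis are both wrong, and the second one is unfixable as stated. First, the claim that the presentation $0\to D\to R^{\oplus2}\to C\to0$ ``forces $D,R\in\ext C$'' misuses extension-closedness: an extension-closed subcategory is not closed under kernels of epimorphisms, so this sequence tells you only that $R\in\ext\{C,D\}$. Worse, the conclusion $D\in\ext C$ is false and would contradict the theorem you are proving: the Auslander--Reiten-type sequence $\sigma:0\to C\to X_3\to D\to0$ gives $X_3\in\ext\{C,D\}$ and $\ext X_3=\cm(R)$, so $D\in\ext C$ would force $\ext C=\cm(R)$, whereas the theorem asserts $\ext C=\add\{R,A,C\}\subsetneq\cm(R)$. (Similarly, your sequence $0\to A^{\oplus2}\to C\to D\to0$ only shows $C\in\ext\{A,D\}$ and says nothing about $\ext C$.) The paper instead obtains $R,A\in\ext C$ from a \emph{self-extension} $0\to C\to R\oplus A\oplus C\to C\to0$, manufactured by composing the exact squares coming from $\sigma,\tau,\zeta$ via Lemmas \ref{5} and \ref{7}; that is the step you are missing.

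Second, the module $R\oplus A\oplus C$ is not rigid, so the planned cross-term $\Ext^1$ computation cannot succeed. Indeed $(x^2+y^3)D=0$, so $D=\cok\left(\begin{smallmatrix}x&y\\y^2&-x\end{smallmatrix}\right)$ is the unique nonfree indecomposable maximal Cohen--Macaulay module over the $\a_2$-curve $k[\![x,y]\!]/(x^2+y^3)$, whose Auslander--Reiten sequence $0\to D\to B\oplus D\to D\to0$ is a nonsplit self-extension that remains nonsplit over $R$; hence $\Ext_R^1(D,D)\ne0$ and, by $2$-periodicity, $\Ext_R^1(C,C)\cong\Ext_R^1(D,D)\ne0$. (This is consistent with $\add\{B,D\}$ and $\add\{R,A,C\}$ being extension-closed: rigidity is sufficient but not necessary, since the middle term $B\oplus D$ of a nonsplit extension can still lie in the subcategory.) The paper's actual mechanism for the upper bound is a support argument: it shows $\{E\in\ind\cm(R)\mid E_\p=0\}=\{B,D\}$ for $\p=(x)$, deduces $\ext D\subseteq\add\{B,D\}$ from Lemma \ref{3}(2) since $D_\p=0$, and only then transfers to $\ext C=\add\{R,A,C\}$ via Lemma \ref{10} --- note also that Lemma \ref{10} always drags $\add R$ along, so your proposed passage from $\ext C$ to $\ext D=\add\{B,D\}$ ``because $\syz R=0$'' does not remove the free summand; showing $R\notin\ext D$ genuinely requires the localization argument. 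You would need to replace both of your key steps by these support-theoretic ones for the proof to go through.
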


\begin{proof}
We observe from \cite[(9.14)]{Y} and Lemma \ref{6} that
$$
\ind\cm(R)=\{R,A,B,C,D,M_i,N_i,X_j,Y_j\mid i=1,2\text{ and }j=1,2,3\},
$$
where $A=R/(x)$, $\syz A\cong B=R/(x^2+y^3)$, $C=\cok\left(\begin{smallmatrix}x^2&xy\\xy^2&-x^2\end{smallmatrix}\right)$, $\syz C\cong D=\cok\delta$ with $\delta=\left(\begin{smallmatrix}x&y\\y^2&-x\end{smallmatrix}\right)$, $\syz M_i\cong N_i$ for $i=1,2$, $\syz X_j\cong Y_j$ for $j=1,2,3$, $N_1=\cok\psi_1$ with $\psi_1=\left(\begin{smallmatrix}x^2&y\\xy^2&-x\end{smallmatrix}\right)$, $Y_1=\cok\eta_1$ with $\eta_1=\left(\begin{smallmatrix}y&0&x\\-x&xy&0\\0&-x&y\end{smallmatrix}\right)$, and there exist exact sequences
$$
\begin{array}{l}
0\to X_3\to X_1\oplus D\oplus Y_2\to Y_3\to0,\quad
0\to Y_2\to Y_3\oplus N_2\to X_2\to0,\quad
0\to X_1\to N_1\oplus Y_3\to Y_1\to0,\\
0\to Y_3\to Y_1\oplus C\oplus X_2\to X_3\to0,\quad
0\to N_1\to R\oplus Y_1\to M_1\to0,\quad
0\to Y_1\to M_1\oplus X_3\to X_1\to0,\\
0\to M_2\to Y_2\oplus B\to N_2\to0,\qquad
\rho:0\to B\to N_2\to A\to0,\qquad
0\to N_2\to X_2\oplus A\to M_2\to0,\\
0\to X_2\to X_3\oplus M_2\to Y_2\to0,\qquad
\sigma:0\to C\to X_3\to D\to0,\qquad
\tau:0\to D\to Y_3\to C\to0,
\end{array}
$$
where the maps $F\to G$ with $F,G\in\ind\cm(R)$ are all the same.
Lemmas \ref{2} and \ref{1} give the subdiagram
$$
\xymatrix@R-1pc@C-1pc{
\add\{R,A\}\ar@{-}[d]\ar@{-}[rd]&\add\{R,B\}\ar@{-}[d]\ar@{-}[rd]\\
\add A\ar@{-}[rd]&\add R\ar@{-}[d]&\add B\ar@{-}[ld]\\
&0
}
$$
of the Hasse diagram of $\E(R)$.
It is easily verified that the sequences $R\xleftarrow{\left(\begin{smallmatrix}x&y\end{smallmatrix}\right)}R^3\xleftarrow{\psi_1}R^3$ and $R\xleftarrow{\left(\begin{smallmatrix}xy&y^2&-x^2\end{smallmatrix}\right)}R^3\xleftarrow{\eta_2}R^3$ are exact.
Hence $N_1\cong\m$ and $Y_1\cong\m^2$, where $\m$ stands for the maximal ideal of $R$.
Note that $\gr_\m R\cong k[x,y]/(x^3)$.
The extension closures $\ext N_1$ and $\ext Y_1$ coincide with $\cm(R)$ by \cite[Corollary 2.7]{stcm} and \cite[Theorem 5.5]{serre}, respectively.
Lemma \ref{4}(2) yields $\ext M_1=\ext X_1=\cm(R)$.
There exist exact squares
$$
\xymatrix@R-1pc@C-1pc{
X_3\ar[r]\ar[d]& X_1\oplus D\ar[r]\ar[d]& N_1\oplus D\ar[r]\ar[d]& R\oplus D\ar[d]\\
Y_2\ar[r]\ar[d]& Y_3\ar[r]\ar[d]& Y_1\ar[r]\ar[d]& M_1\ar[d]\\
N_2\oplus C\ar[r]\ar[d]& X_2\oplus C\ar[r]\ar[d]& X_3\ar[r]\ar[d]& X_1\ar[d]\\
D\oplus A\oplus C\ar[r]& D\oplus M_2\oplus C\ar[r]& D\oplus Y_2\ar[r]& Y_3
}\qquad
\xymatrix@R-1pc@C-1pc{
X_2\ar[r]\ar[d]& M_2\ar[r]\ar[d]& B\ar[d]\\
X_3\ar[r]\ar[d]& Y_2\ar[r]\ar[d]& N_2\ar[d]\\
X_1\oplus D\ar[r]& Y_3\ar[r]& X_2
}\qquad
\xymatrix@R-1pc@C-1pc{
M_2\ar[r]\ar[d]& Y_2\ar[r]\ar[d]& Y_3\ar[d]\\
B\ar[r]\ar[d]& N_2\ar[r]\ar[d]& X_2\ar[d]\\
0\ar[r]& A\ar[r]& M_2
}
$$
Applying Lemma \ref{5}, we obtain the following four exact sequences.
$$
\begin{array}{l}
\alpha:0\to X_3\to N_1\oplus D\oplus N_2\oplus C\to X_3\to0,\qquad
\beta:0\to X_2\to B\oplus X_1\oplus D\to X_2\to0,\\
\gamma:0\to M_2\to Y_3\to M_2\to0,\qquad
\zeta:0\to X_3\to R\oplus D\oplus D\oplus A\oplus C\to Y_3\to0.
\end{array}
$$
The exact sequences $\alpha,\beta,\gamma$ respectively show that $N_1\in\ext X_3$, $X_1\in\ext X_2$ and $Y_3\in\ext M_2$.
The first two containments give equalities $\ext X_3=\ext X_2=\cm(R)$, which give equalities $\ext Y_3=\ext Y_2=\cm(R)$ by Lemma \ref{4}(2).
The third containment now shows that $\ext M_2=\cm(R)$, which implies $\ext N_2=\cm(R)$ by Lemma \ref{4}(2) again.
In summary, we have
\begin{equation}\label{8}
\ext E=\cm(R)\qquad\text{for all }E\in\{M_i,N_i,X_j,Y_j\mid i=1,2\text{ and }j=1,2,3\}.
\end{equation}
Applying Lemma \ref{7} to the exact sequences $\sigma,\tau,\zeta$, we get an exact sequence $0\to C\to R\oplus A\oplus C\to C\to0$, which shows that $R,A\in\ext C$.
It follows from Lemma \ref{4}(1) that $B\in\ext D$, and thus $\add\{B,D\}\subseteq\ext D$.

Let $\p$ be the prime ideal of $R$ generated by $x$.
We claim that $B,D$ are the only nonisomorphic indecomposable maximal Cohen--Macaulay $R$-modules $X$ with $X_\p=0$.
Indeed, by \eqref{8}, for all $E\in\{M_i,N_i,X_j,Y_j\mid i=1,2\text{ and }j=1,2,3\}$ the module $R$ belongs to $\ext E$, so that $R_\p$ belongs to $\ext E_\p$ by Lemma \ref{3}(2), and in particular, $E_\p\ne0$.
Since $A=R/(x)$ and $B=R/(x^2+y^3)$, we have $A_\p\ne0=B_\p$.
There are equivalences
$$
\left(\begin{smallmatrix}x&y\\y^2&-x\end{smallmatrix}\right)
\cong\left(\begin{smallmatrix}x&1\\y^2&-y^{-1}x\end{smallmatrix}\right)
\cong\left(\begin{smallmatrix}0&1\\y^2+y^{-1}x^2&-y^{-1}x\end{smallmatrix}\right)
\cong\left(\begin{smallmatrix}0&1\\y^2+y^{-1}x^2&0\end{smallmatrix}\right)
\cong\left(\begin{smallmatrix}0&1\\1&0\end{smallmatrix}\right)
\cong\left(\begin{smallmatrix}1&0\\0&1\end{smallmatrix}\right)
$$
of matrices over $R_\p$.
Hence it holds that $D_\p\cong\cok\left(\begin{smallmatrix}1&0\\0&1\end{smallmatrix}\right)=0$.
The localized exact sequence $\sigma_\p:0\to C_\p\to(X_3)_\p\to D_\p\to0$ shows that $C_\p\cong(X_3)_\p\ne0$.
Thus the claim follows.

It follows from Lemma \ref{3}(2) and the equality $D_\p=0$ that every $R$-module $E$ that belongs to $\ext D$ is such that $E_\p=0$.
Therefore, the above claim implies that $\add\{B,D\}=\ext D$.
Using Lemma \ref{3}(3), we get $\add\{B,D,R\}=\ext\{D,R\}$.
In summary, we have
\begin{equation}\label{13}
0\subsetneq(\ext B=\add B)\subsetneq(\ext D=\add\{B,D\})\subsetneq(\ext\{D,R\}=\add\{B,D,R\})\subsetneq\cm(R).
\end{equation}
Suppose that there exists an extension-closed subcategory $\X$ with $\add\{B,D,R\}\subsetneq\X\subsetneq\cm(R)$.
Then $\X$ contains some module
$$
E\in\ind\cm(R)\setminus\{B,D,R\}=\{A,C,M_i,N_i,X_j,Y_j\mid i=1,2\text{ and }j=1,2,3\}.
$$
In view of \eqref{8}, the module $E$ must be either $A$ or $C$.
Hence $\X$ contains either $\ext\{A,B\}$ or $\ext\{C,D\}$.
The exact sequences $\rho,\sigma$ give rise to equalities $\ext\{A,B\}=\ext\{C,D\}=\cm(R)$.
Therefore we have $\X=\cm(R)$, which is a contradiction.
Thus, the chain \eqref{13} is saturated.

Applying Lemma \ref{10} to $\X=\{D\}$, we obtain $\ext(\syz\{D\})=\add\syz(\ext D)$.
Hence
$$
\ext C=\ext\{C,R\}=\ext(\syz\{D\})=\add\syz(\ext D)=\add\{\syz B,\syz D,R\}=\add\{A,C,R\}.
$$
We have a chain of subcategories of $\cm(R)$:
\begin{equation}\label{14}
0\subsetneq(\ext A=\add A)\subsetneq(\ext\{A,R\}=\add\{A,R\})\subsetneq(\ext C=\add\{A,C,R\})\subsetneq\cm(R).
\end{equation}
Suppose that there exists an extension-closed subcategory $\X$ with $\add\{A,C,R\}\subsetneq\X\subsetneq\cm(R)$.
Then $\X$ contains some module $E\in\ind\cm(R)\setminus\{A,C,R\}=\{B,D,M_i,N_i,X_j,Y_j\mid i=1,2\text{ and }j=1,2,3\}$.
By \eqref{8} we must have $E\in\{B,D\}$.
Hence $\X$ contains either $\ext\{A,B\}$ or $\ext\{C,D\}$, and get a contradiction as before.
Thus, the chain \eqref{14} is saturated.
Now we obtain the Hasse diagram in the theorem.
\end{proof}

\begin{thm}\label{17}
Let $R=k[\![x,y]\!]/(x^2y+y^{n-1})$ be the $\d_n^1$-singularity, where $n\ge4$ is an integer.
Set $A=R/(y)$, $B=R/(x^2+y^{n-2})$, $M_j=\cok\left(\begin{smallmatrix}x&y^j\\y^{n-j-2}&-x\end{smallmatrix}\right)$ and $N_j=\cok\left(\begin{smallmatrix}xy&y^{j+1}\\y^{n-j-1}&-xy\end{smallmatrix}\right)$ for each $0\le j\le n-3$.
\begin{enumerate}[\rm(1)]
\item
Suppose that $n$ is odd and put $l=\frac{n-3}{2}$.
Then the Hasse diagram of the poset $\E(R)$ is the following.
$$
\xymatrix@R-1pc@C-1pc{
\cm(R)\ar@{-}[d]\ar@{-}[rd]\\
\add\{R,A,N_1,\dots,N_l\}\ar@{-}[d]&\add\{R,B,M_1,\dots,M_l\}\ar@{-}[d]\ar@{-}[rd]\\
\add\{R,A\}\ar@{-}[d]\ar@{-}[rd]&\add\{R,B\}\ar@{-}[d]\ar@{-}[rd]&\add\{B,M_1,\dots,M_l\}\ar@{-}[d]\\
\add A\ar@{-}[rd]&\add R\ar@{-}[d]&\add B\ar@{-}[ld]\\
&0
}
$$
\item
Suppose that $n$ is even and put $l=\frac{n-4}{2}$.
Then the Hasse diagram of the poset $\E(R)$ is the following.
$$
\xymatrix@R2pc@C1.3pc{
&&&&\cm(R)\ar@{-}[lld]\ar@{-}[ld]\ar@{-}[d]\ar@{-}[rd]\ar@{-}[rrd]\ar@{-}[rrrd]\\
&&\X_1\ar@{-}[ld]\ar@{-}[d]&\X_2\ar@{-}[lld]\ar@{-}[d]\ar@{-}[rrd]&\X_3\ar@{-}[lld]\ar@{-}[d]\ar@{-}[rrd]&\X_4\ar@{-}[rd]\ar@{-}[rrd]&\X_5\ar@{-}[ld]\ar@{-}[rrd]&\X_6\ar@{-}[d]\ar@{-}[rd]\ar@{-}[rrd]\\
&\Y_1\ar@{-}[ld]\ar@{-}[d]\ar@{-}[rrd]&\Y_2\ar@{-}[lld]\ar@{-}[d]\ar@{-}[rrd]&\Y_3\ar@{-}[d]\ar@{-}[rrd]&\Y_4\ar@{-}[d]\ar@{-}[rrd]&\Y_5\ar@{-}[lllld]\ar@{-}[d]\ar@{-}[rrd]&\Y_6\ar@{-}[lllld]\ar@{-}[d]\ar@{-}[rrd]&\Y_7\ar@{-}[rd]\ar@{-}[rrd]\ar@{-}[rrrd]&\Y_8\ar@{-}[ld]\ar@{-}[rd]\ar@{-}[rrrd]&\Y_9\ar@{-}[rd]\ar@{-}[rrd]\\
\ZZ_1\ar@{-}[rrd]\ar@{-}[rrrrrd]&\ZZ_2\ar@{-}[rrd]\ar@{-}[rrrrd]&\ZZ_3\ar@{-}[rrd]\ar@{-}[rrrd]&\ZZ_4\ar@{-}[ld]\ar@{-}[d]&\ZZ_5\ar@{-}[lld]\ar@{-}[d]&\ZZ_6\ar@{-}[lld]\ar@{-}[rd]&\ZZ_7\ar@{-}[lld]\ar@{-}[rd]&\ZZ_8\ar@{-}[lld]\ar@{-}[ld]&\ZZ_9\ar@{-}[llld]\ar@{-}[ld]&\ZZ_{10}\ar@{-}[lllld]\ar@{-}[ld]&\ZZ_{11}\ar@{-}[llld]\ar@{-}[lld]&\ZZ_{12}\ar@{-}[llllld]\ar@{-}[llld]\\
&&\W_1\ar@{-}[rrrd]&\W_2\ar@{-}[rrd]&\W_3\ar@{-}[rd]&\W_4\ar@{-}[d]&\W_5\ar@{-}[ld]&\W_6\ar@{-}[lld]&\W_7\ar@{-}[llld]\\
&&&&&0}
$$
The vertices in the diagram are as follows, where $C_\pm=R/(xy\pm\sqrt{-1}\,y^{l+2})$ and $D_\pm=R/(x\mp\sqrt{-1}\,y^{l+1})$.
$$
\begin{array}{l}
\X_1=\add\{R,A,C_+,C_-,N_1,\dots,N_l\},\ 
\X_2=\add\{R,A,C_+,D_-\},\ 
\X_3=\add\{R,A,C_-,D_+\},\\
\X_4=\add\{R,B,C_-,D_+\},\ 
\X_5=\add\{R,B,C_+,D_-\},\ 
\X_6=\add\{R,B,D_+,D_-,M_1,\dots,M_l\},\\
\Y_1=\add\{R,A,C_+\},\ 
\Y_2=\add\{R,A,C_-\},\ 
\Y_3=\add\{A,C_+,D_-\},\ 
\Y_4=\add\{A,C_-,D_+\},\\
\Y_5=\add\{R,C_+,D_-\},\ 
\Y_6=\add\{R,C_-,D_+\},\ 
\Y_7=\add\{R,B,D_+\},\ 
\Y_8=\add\{R,B,D_-\},\\
\Y_9=\add\{B,D_+,D_-,M_1,\dots,M_l\},\ 
\ZZ_1=\add\{R,A\},\ 
\ZZ_2=\add\{R,C_+\},\ 
\ZZ_3=\add\{R,C_-\},\\
\ZZ_4=\add\{A,C_+\},\ 
\ZZ_5=\add\{A,C_-\},\ 
\ZZ_6=\add\{C_+,D_-\},\ 
\ZZ_7=\add\{C_-,D_+\},\ 
\ZZ_8=\add\{R,D_-\},\\
\ZZ_9=\add\{R,D_+\},\ 
\ZZ_{10}=\add\{R,B\},\ 
\ZZ_{11}=\add\{B,D_+\},\ 
\ZZ_{12}=\add\{B,D_-\},\ 
\W_1=\add A,\\
\W_2=\add C_+,\ 
\W_3=\add C_-,\ 
\W_4=\add R,\ 
\W_5=\add D_-,\ 
\W_6=\add D_+,\ 
\W_7=\add B.
\end{array}
$$
\end{enumerate}
\end{thm}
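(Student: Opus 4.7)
The approach mirrors that of Theorem \ref{25}, with the extra richness in the even case reflecting further splittings of the defining equation. First, by \cite[(9.12)]{Y} together with Lemma \ref{6} we extract the explicit list $\ind\cm(R)$ and the Auslander--Reiten sequences connecting its objects: in the odd case the indecomposables are $R, A, B$ together with the matrix factorizations $M_j, N_j$, while in the even case the further splitting $x^2+y^{n-2} = (x+\sqrt{-1}\,y^{l+1})(x-\sqrt{-1}\,y^{l+1})$ produces the additional indecomposables $C_\pm, D_\pm$. Since $x^2y+y^{n-1} = y\cdot(x^2+y^{n-2})$ exhibits a regular-sequence pair in $k[\![x,y]\!]$, Lemma \ref{1} shows $A$ and $B$ are rigid; in the even case the factorizations $(xy \pm \sqrt{-1}\,y^{l+2})(x \mp \sqrt{-1}\,y^{l+1}) = x^2y+y^{n-1}$ likewise show $C_\pm, D_\pm$ are rigid. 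Lemma \ref{2} then yields extension-closedness of the additive closures at the bottom of each diagram, including (for the even case) the mixed closures $\add\{C_+,D_-\}$ and $\add\{C_-,D_+\}$ that come from a common factorization.

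Next, for every indecomposable $E$ not among these rigid ones, we aim to show $\ext E = \cm(R)$. Following Theorem \ref{25}, we chain Auslander--Reiten sequences into exact squares via Lemma \ref{5} and apply Lemma \ref{7} to manufacture self-extensions $0 \to E' \to F \to E' \to 0$ placing a generator of $\ext(\ind\cm(R))$ inside $\ext E$. Using \cite[Proposition 6.8]{serre} combined with \cite[Corollary 2.7]{stcm} reduces matters to verifying that a high syzygy of $k$ lies in $\ext E$, and Lemma \ref{4}(2) propagates the conclusion through all syzygies and cosyzygies (computed via Lemma \ref{6}). Consequently, any extension-closed subcategory containing some $M_j$ or $N_j$ together with $R$ is forced to be all of $\cm(R)$.

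To distinguish the remaining small subcategories we localize at the minimal primes of $R$: namely $(y)$ and $(x^2+y^{n-2})$ in the odd case, and $(y), (x \pm \sqrt{-1}\,y^{l+1})$ in the even case. Direct computation with the matrix factorizations shows which indecomposables vanish at which prime; Lemma \ref{3}(2) then forces any extension-closed subcategory to be "support-saturated," that is, determined by the set of minimal primes annihilating its members. Combining this with Lemma \ref{3}(3) (to adjoin $R$) and Lemma \ref{10} (to relate $\ext\X$ with $\ext\syz\X$) identifies the listed $\X_i, \Y_i, \ZZ_i, \W_i$ as precisely the realizable support patterns, yielding both that they are extension-closed and that no others appear below $\cm(R)$.

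The main obstacle will be the combinatorial completeness of the even-case diagram: one must verify that all listed nodes are distinct extension-closed subcategories, that each drawn edge is correct, and crucially that no additional subcategory is hidden between adjacent nodes. Saturation at each edge is forced by showing that any extension-closed subcategory strictly containing a listed node must contain some $M_j, N_j$, or an \emph{incompatible} combination of factor-quotients (for instance, both $D_+$ and $D_-$, or both $A$ and $B$), and then the short exact sequences built in the second step push it up to the next listed node. While the bookkeeping is substantially heavier than in the odd case, each individual verification is a routine application of the lemmas already developed.
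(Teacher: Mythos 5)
Your overall architecture (rigidity for the bottom of the diagram, Auslander--Reiten sequences chained into exact squares to generate $\cm(R)$, localization at minimal primes to bound the rest) does match the paper's, but two of your central claims are false and would derail the classification. First, you assert that $\ext E=\cm(R)$ for \emph{every} indecomposable $E$ outside the rigid list $R,A,B,C_\pm,D_\pm$, and conclude that any extension-closed subcategory containing some $M_j$ or $N_j$ together with $R$ must be all of $\cm(R)$. This contradicts the theorem you are proving: $\add\{R,B,M_1,\dots,M_l\}$ in the odd case and $\X_1,\X_6,\Y_9$ in the even case are proper extension-closed subcategories containing $M_j$'s or $N_j$'s (and $R$). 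Indeed $(M_j)_{(y)}=0$, so by Lemma \ref{3}(2) every module in $\ext M_j$ vanishes at $(y)$ and $\ext M_j$ can never contain $A$ or $R$. What is actually true, and what the paper proves, is that $\ext X_j=\ext Y_j=\cm(R)$ for the \emph{other} two $\tau$-orbits $X_j=\cok\left(\begin{smallmatrix}x&y^j\\y^{n-j-1}&-xy\end{smallmatrix}\right)$ and $Y_j\cong(x,y^j)R$ appearing in \cite[(9.11), (9.12)]{Y}, which your sketch never distinguishes from $M_j,N_j$; the modules $M_j,N_j$ instead satisfy $\ext M_j=\add\{B,M_1,\dots,M_l\}$ (resp.\ $=\Y_9$) and $\ext N_j=\add\{R,A,N_1,\dots,N_l\}$ (resp.\ $=\X_1$). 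Second, your ``support-saturated'' principle --- that extension-closed subcategories are determined by the set of minimal primes annihilating their members --- is false: $\add R$, $\add\{R,A\}$, $\add\{R,C_+\}$, $\X_2$, and $\cm(R)$ all have full support yet are distinct nodes of the diagram. Localization via Lemma \ref{3}(2) only yields \emph{upper} bounds (e.g.\ $\ext\{D_+,D_-\}\subseteq\add\{E:\chi_1(E)=0\}$); the matching lower bounds must come from explicitly constructed extensions, and the separation of same-support nodes comes from rigidity, not support.

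A further gap: to make Lemma \ref{2} certify that $\ZZ_4=\add\{A,C_+\}$, $\ZZ_6=\add\{C_+,D_-\}$, $\Y_1$, $\Y_7$, etc.\ are extension-closed, you need the \emph{direct sums} $A\oplus C_+$, $C_+\oplus D_-$, $B\oplus D_+$, \dots\ to be rigid, i.e.\ the cross-groups $\Ext_R^1(C_+,D_-)$ and $\Ext_R^1(D_-,C_+)$ must vanish. Lemma \ref{1} only gives rigidity of each factor module $R/(g)$, $R/(h)$ separately for a factorization $f=gh$; it says nothing about extensions between modules arising from \emph{different} factorizations, and your appeal to ``a common factorization'' does not supply the needed vanishing. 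The paper imports these maximal-rigidity statements from the cluster-tilting computation in \cite[Proposition 2.6]{BIKR}; without that input (or a direct matrix-factorization computation of the cross-Ext groups) the middle layers of the even-case diagram are not justified.
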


\begin{proof}
Put $X_j=\cok\left(\begin{smallmatrix}x&y^j\\y^{n-j-1}&-xy\end{smallmatrix}\right)$ and $Y_j=\cok\left(\begin{smallmatrix}xy&y^j\\y^{n-j-1}&-x\end{smallmatrix}\right)$ for each $0\le j\le n-3$.
By \cite[(9.11) and (9.12)]{Y} and Lemma \ref{6}, we have that
\begin{equation}\label{20}
\ind\cm(R)\supseteq\{R,A,B,X_{l+1},Y_{l+1},X_j,Y_j,M_j,N_j\mid 1\le j\le l\},
\end{equation}
and that there exist exact sequences
$$
\begin{array}{l}
\rho_j:0\to Y_{j+1}\to M_{j+1}\oplus N_j\to X_{j+1}\to0,\qquad
0\to M_j\to X_j\oplus Y_{j+1}\to N_j\to0,\\
\zeta_j:0\to X_{j+1}\to N_{j+1}\oplus M_j\to Y_{j+1}\to0,\qquad
\sigma_j:0\to N_j\to Y_j\oplus X_{j+1}\to M_j\to0
\end{array}
\quad(0\le j\le l)
$$
where the maps $F\to G$ with $F,G\in\ind\cm(R)$ are all the same, and isomorphisms $M_0\cong B$, $N_0\cong A\oplus R$, $X_0\cong Y_0\cong R$, $\syz A\cong B$, $\syz X_j\cong Y_j\cong(x,y^j)R$ and $\syz N_j\cong M_j\cong(xy,y^{j+1})R$ for all $1\le j\le l+1$.
For each $1\le j\le l$ there are exact squares in the left below, which produce exact sequences in the right below.
$$
\xymatrix@R-1.3pc@C-.2pc{
M_j\ar[r]\ar[d]&X_j\ar[r]\ar[d]&M_{j-1}\ar[r]\ar[d]&X_{j-1}\ar[d]&&0\to M_j\to M_{j+1}\oplus M_{j-1}\to M_j\to0,\\
Y_{j+1}\ar[r]\ar[d]&N_j\ar[r]\ar[d]&Y_j\ar[r]\ar[d]&N_{j-1}\ar[r]\ar[d]&Y_{j-1}\ar[d]&0\to X_j\to X_{j+1}\oplus X_{j-1}\to X_j\to0,\\
M_{j+1}\ar[r]&X_{j+1}\ar[r]\ar[d]&M_j\ar[r]\ar[d]&X_j\ar[r]\ar[d]&M_{j-1}\ar[d]&0\to N_j\to N_{j+1}\oplus N_{j-1}\to N_j\to0,\\
&N_{j+1}\ar[r]&Y_{j+1}\ar[r]&N_j\ar[r]&Y_j&0\to Y_j\to Y_{j+1}\oplus Y_{j-1}\to Y_j\to0.}
$$
These four exact sequences show that
$$
\begin{array}{l}
M_{l+1}\in\ext M_l=\ext M_{l-1}=\cdots=\ext M_1\ni M_0=B,\qquad
\ext X_l=\ext X_{l-1}=\cdots=\ext X_1,\\
N_{l+1}\in\ext N_l=\ext N_{l-1}=\cdots=\ext N_1\ni N_0=A\oplus R,\qquad
\ext Y_l=\ext Y_{l-1}=\cdots=\ext Y_1.
\end{array}
$$
Since $Y_1$ is isomorphic to the maximal ideal $(x,y)R$, we have $\ext Y_1=\cm(R)$ by \cite[Corollary 2.7]{stcm}.
Lemma \ref{4}(2) implies that $\ext X_1=\cm(R)$.
Thus for all $1\le j\le l$ there are inclusions and equalities
\begin{equation}\label{15}
\ext M_j\supseteq\add\{B,M_1,\dots,M_{l+1}\},\quad
\ext N_j\supseteq\add\{R,A,N_1,\dots,N_{l+1}\},\quad
\ext X_j=\ext Y_j=\cm(R).
\end{equation}

(1) Let $n$ be odd.
By Lemmas \ref{2} and \ref{1}, we get the subdiagram of the Hasse diagram of $\E(R)$:
$$
\xymatrix@R-1pc@C+1pc{
\add\{R,A\}\ar@{-}[d]\ar@{-}[rd]&\add\{R,B\}\ar@{-}[d]\ar@{-}[rd]\\
\add A\ar@{-}[rd]&\add R\ar@{-}[d]&\add B\ar@{-}[ld]\\
&0
}
$$
By virtue of \cite[(9.11)]{Y}, the inclusion \eqref{20} is an equality.
Put $\p=(y)\in\spec R$.
It is clear that $(M_0)_\p=B_\p=0\ne A_\p$.
For each $1\le j\le l+1$, as $M_j\cong(xy,y^{j+1})R$ and $Y_j\cong(x,y^j)R$, we have $(M_j)_\p\subseteq\p R_\p=0$ and $(Y_j)_\p\cong R_\p$.
For each $1\le j\le l+1$, there is an exact sequence $0\to X_j\to R^{\oplus2}\to Y_j\to0$, which shows that $(X_j)_\p\cong R_\p$.
For each $1\le j\le l$, the exact sequence $\sigma_j$ shows $(N_j)_\p\ne0$.
Hence there is an equality
\begin{equation}\label{16}
\{E\in\ind\cm(R)\mid E_\p=0\}=\{B,M_1,\dots,M_l\}.
\end{equation}

Since $n$ is odd, we have isomorphisms $X_{l+1}\cong Y_{l+1}$, $M_l\cong M_{l+1}$ and $N_l\cong N_{l+1}$ by \cite[(9.11.5)]{Y}.
The short exact sequence $\sigma_l$ gives rise to a short exact sequence $0\to N_{l+1}\to Y_l\oplus Y_{l+1}\to M_{l+1}\to0$.
Therefore, there are exact squares in the left below, which produce an exact sequence in the right below.
$$
\xymatrix@R-1.3pc@C+1.3pc{
X_{l+1}\ar[r]\ar[d]&M_l\ar[r]\ar[d]&X_l\ar[d]\\
N_{l+1}\ar[r]\ar[d]&Y_{l+1}\ar[r]\ar[d]&N_l\ar[d]&0\to X_{l+1}\to X_l\oplus Y_l\to X_{l+1}\to0.
\\
Y_l\ar[r]&M_{l+1}\ar[r]&X_{l+1}
}
$$
This exact sequence and \eqref{15} give $\cm(R)=\ext X_l\subseteq\ext X_{l+1}$, which and Lemma \ref{4}(2) yield $\cm(R)=\ext X_{l+1}=\ext Y_{l+1}$.
By \eqref{16}, for any $1\le j\le l$ and any $E\in\ext M_j$ it holds that $E_\p=0$.
Using \eqref{15} and \eqref{16}, we get $\ext M_j=\add\{B,M_1,\dots,M_l\}$ for each $1\le j\le l$.
This equality, \eqref{15}, and Lemma \ref{4}(1) yield $\ext N_j=\add\{R,A,N_1,\dots,N_l\}$ for each $1\le j\le l$.
Applying Lemma \ref{3}(3), we get $\ext\{R,M_j\}=\add\{R,B,M_1,\dots,M_l\}$ for every $1\le j\le l$.
The exact sequence $\sigma_l$ shows that $X_{l+1}\in\ext\{M_l,N_l\}$, which implies $\ext\{M_l,N_l\}=\cm(R)$.
Therefore, $\ext\{M_j,N_h\}=\cm(R)$ for all $1\le j,h\le l$.
From $\sigma_0$ and the proof of Lemma \ref{3}(3) (or \cite[page 78, line 2]{Y}) we get an exact sequence $0\to A\to X_1\to B\to0$, which implies $X_1\in\ext\{A,B\}$, and $\ext\{A,B\}=\cm(R)$ by \eqref{15}.
Now we obtain the Hasse diagram as in the theorem.

(2) We consider the case where $n$ is even.
In view of \cite[(9.12)]{Y} and Lemma \ref{6}, we have that
$$
\ind\cm(R)=\{R,A,B,C_\pm,D_\pm,X_{l+1},Y_{l+1},X_j,Y_j,M_j,N_j\mid 1\le j\le l\}
$$
with $\syz C_\pm\cong D_\pm$, $M_{l+1}\cong D_+\oplus D_-$ and $N_{l+1}\cong C_+\oplus C_-$, and that there are exact sequences
$$
\begin{array}{l}
\alpha_\pm:0\to C_\pm\to Y_{l+1}\to D_\pm\to0,\qquad
\beta:0\to B\to Y_1\to A\to0,\\
\gamma_\pm:0\to D_\pm\to X_{l+1}\to C_\pm\to0,\qquad
\delta:0\to A\to X_1\to B\to0.
\end{array}
$$
By Lemma \ref{1} we see that $R,A,B,C_\pm,D_\pm$ are rigid.
As is shown in the proof of \cite[Proposition 2.6]{BIKR}, the direct sums $C_+\oplus D_-,\,C_-\oplus D_+,\,C_+\oplus A,\,D_+\oplus B,\,A\oplus C_-,\,B\oplus D_-$ are (maximal) rigid.
Applying Lemma \ref{2}, we see that the following subcategories are extension-closed.
$$
\Y_1,\Y_2,\Y_5,\Y_6,\Y_7,\Y_8,\ZZ_1,\ZZ_2,\ZZ_3,\ZZ_4,\ZZ_5,\ZZ_6,\ZZ_7,\ZZ_8,\ZZ_9,\ZZ_{10},\ZZ_{11},\ZZ_{12},\W_1,\W_2,\W_3,\W_4,\W_5,\W_6,\W_7.
$$
Taking the direct sum of $\alpha_+$ and $\alpha_-$, we get an exact sequence $0\to N_{l+1}\to Y_{l+1}\oplus Y_{l+1}\to M_{l+1}\to0$.
Thus there are exact squares in the left below, which produce an exact sequence in the right below.
$$
\xymatrix@R-1.3pc@C+1.3pc{
X_{l+1}\ar[r]\ar[d]&M_l\ar[r]\ar[d]&X_l\ar[d]\\
N_{l+1}\ar[r]\ar[d]&Y_{l+1}\ar[r]\ar[d]&N_l\ar[d]&0\to X_{l+1}\to X_l\oplus Y_{l+1}\to X_{l+1}\to0.
\\
Y_{l+1}\ar[r]&M_{l+1}\ar[r]&X_{l+1}
}
$$
Hence $X_l$ belongs to $\ext X_{l+1}$.
Thanks to \eqref{15} and Lemma \ref{4}(2), for all integers $1\le h\le l$, it holds that
$$
\ext Y_h=\ext X_h=\ext Y_1=\ext X_l=\ext X_{l+1}=\ext Y_{l+1}=\cm(R).
$$
By $\beta,\gamma_\pm$ we have $\ext\{A,B\}=\ext\{C_+,D_+\}=\ext\{C_-,D_-\}=\cm(R)$. 
As $M_{l+1}\cong D_+\oplus D_-$, it holds that
$$
\begin{array}{l}
C_+,D_+\in\ext\{C_+,D_+,D_-\}=\ext\{M_{l+1},C_+\}\subseteq\ext\{M_j,C_+\},\\
C_-,D_-\in\ext\{C_-,D_+,D_-\}=\ext\{M_{l+1},C_-\}\subseteq\ext\{M_j,C_-\}
\end{array}
$$
for all $1\le j\le l$, where the inclusions follow from \eqref{15}.
Hence $\ext\{M_j,C_+\}=\ext\{M_j,C_-\}=\cm(R)$ hold for every $1\le j\le l+1$.
Applying Lemma \ref{4}(2), we get $\ext\{N_j,D_+\}=\ext\{N_j,D_-\}=\cm(R)$ for every $1\le j\le l+1$.
It is observed from \eqref{15} that $A,B\in\ext\{A,M_j\}\cap\ext\{B,N_j\}\cap\ext\{M_j,N_{j'}\}$, which implies that $\ext\{A,M_j\}=\ext\{B,N_j\}=\ext\{M_j,N_{j'}\}=\cm(R)$ for all $1\le j,j'\le l$.
So far, we have shown that
\begin{equation}\label{19}
\ext\C=\cm(R)\text{ for all }\C\in\left\{\begin{array}{r|l}
\begin{matrix}
\{X_h\},\{Y_h\},\{A,B\},\{C_+,D_+\},\{C_-,D_-\},\\
\{C_+,M_j\},\{C_-,M_j\},\{D_+,N_j\},\{D_-,N_j\},\\
\{A,M_j\},\{B,N_j\},\{M_j,N_{j'}\}
\end{matrix}&
\begin{matrix}
1\le h\le l+1\\
\,1\le j,j'\le l
\end{matrix}\end{array}\right\}.
\end{equation}
Since $M_{l+1}\cong D_+\oplus D_-$ and $N_0\cong A\oplus R$, the exact sequences $\alpha_\pm,\rho_l,\sigma_l,\rho_{l-1},\sigma_{l-1},\dots,\rho_1,\sigma_1,\rho_0,\delta$ and the exact sequences $\gamma_+,\zeta_l,\alpha_-$ respectively give rise to the following two diagrams of exact squares.
$$
\begin{array}{l}
\xymatrix@R-1pc@C-1.4pc{
C_\pm\ar[r]\ar[d]&Y_{l+1}\ar[r]\ar[d]&D_\mp\oplus N_l\ar[r]\ar[d]&D_\mp\oplus Y_l\ar[r]\ar[d]&D_\mp\oplus N_{l-1}\ar[r]\ar[d]&\cdots\ar[r]&D_\mp\oplus N_1\ar[r]\ar[d]&D_\mp\oplus Y_1\ar[r]\ar[d]&D_\mp\oplus A\oplus R\ar[r]\ar[d]&D_\mp\oplus R\ar[d]\\
0\ar[r]&D_\pm\ar[r]&X_{l+1}\ar[r]&M_l\ar[r]&X_l\ar[r]&\cdots\ar[r]&X_2\ar[r]&M_1\ar[r]&X_1\ar[r]&B
}\\
\xymatrix@R-1pc@C-1pc{
D_+\ar[r]\ar[d]&X_{l+1}\ar[r]\ar[d]&M_l\oplus C_-\ar[r]\ar[d]&M_l\ar[d]\\
0\ar[r]&C_+\ar[r]&Y_{l+1}\ar[r]&D_-
}
\end{array}
$$
We obtain exact sequences $0\to C_\pm\to D_\mp\oplus R\to B\to0$ and $\varepsilon:0\to D_+\to M_l\to D_-\to0$, which show
\begin{equation}\label{18}
R,D_\pm\in\ext\{B,C_\mp\},\qquad
M_l\in\ext\{D_+,D_-\}.
\end{equation}
It follows from Lemma \ref{4}(1) and \eqref{15} that there are containments
\begin{equation}\label{29}
C_\pm\in\ext\{A,D_\mp\}
\quad\text{and}\quad
B,M_j\in\ext M_l\subseteq\ext\{D_+,D_-\}
\quad\text{for all }1\le j\le l+1.
\end{equation}
Put $\p=(y)$, $\q=(x+\sqrt{-1}\,y^{l+1})R$ and $\r=(x-\sqrt{-1}\,y^{l+1})R$; these are the minimal prime ideals of $R$.
For each $E\in\ind\cm(R)$ we denote by $\chi(E)=(\chi_1(E),\chi_2(E),\chi_3(E))$ the triple of the dimensions of the vector spaces $E_\p,E_\q,E_\r$ over the fields $R_\p,R_\q,R_\r$ respectively.
Note that
$$
A=R/\p,\qquad
B=R/\q\r,\qquad
C_+=R/\p\q,\qquad
C_-=R/\p\r,\qquad
D_+=R/\r,\qquad
D_-=R/\q.
$$
We see that $\chi(R)=(1,1,1)$, $\chi(A)=(1,0,0)$, $\chi(B)=(0,1,1)$, $\chi(C_+)=(1,1,0)$, $\chi(C_-)=(1,0,1)$, $\chi(D_+)=(0,0,1)$ and $\chi(D_-)=(0,1,0)$.
Fix $1\le h\le l+1$ and $1\le j\le l$.
As $\syz X_h\cong Y_h\cong(x,y^h)R$ and $X_h$ is generated by two elements, there is an exact sequence $0\to(x,y^h)R\to R^{\oplus2}\to X_h\to0$.
Localizing this at $\p,\q,\r$ shows $\chi(X_h)=\chi(Y_h)=(1,1,1)$.
Since $\syz N_j\cong M_j\cong(xy,y^{j+1})R$, a similar argument shows $\chi(M_j)=(0,1,1)$ and $\chi(N_j)=(2,1,1)$.
In summary, for all $1\le h\le l+1$ and $1\le j\le l$ we have the following table.
\begin{center}
\begin{tabular}{|c||c|c|c|c|c|c|c|c|} \hline
$E$&$R,X_h,Y_h$&$A$&$B,M_j$&$C_+$&$C_-$&$D_+$&$D_-$&$N_j$\\
\hline
$\chi(E)$&$(1,1,1)$&$(1,0,0)$&$(0,1,1)$&$(1,1,0)$&$(1,0,1)$&$(0,0,1)$&$(0,1,0)$&$(2,1,1)$\\
\hline
\end{tabular}
\end{center}
Since the equalities in the left below hold, so do the equalities in the right below by \eqref{29}.
$$
\begin{cases}
\{E\mid\chi_1(E)=0\}=\{B,M_1,\dots,M_l,D_+,D_-\},\\
\{E\mid\chi_2(E)=0\}=\{A,C_-,D_+\},\\
\{E\mid\chi_3(E)=0\}=\{A,C_+,D_-\}.
\end{cases}
\begin{cases}
\ext\{D_+,D_-\}=\add\{B,M_1,\dots,M_l,D_+,D_-\}=\Y_9,\\
\ext\{A,D_+\}=\add\{A,C_-,D_+\}=\Y_4,\\
\ext\{A,D_-\}=\add\{A,C_+,D_-\}=\Y_3.
\end{cases}
$$
Hence $\Y_3,\Y_4,\Y_9$ are extension-closed.
Using Lemmas \ref{3}(3) and \ref{10}, we obtain the following six equalities.
$$
\begin{array}{l}
\begin{cases}
\ext\{R,D_+,D_-\}=\add\{R,D_+,D_-,B,M_1,\dots,M_l\}=\X_6,\\
\ext\{R,A,D_+\}=\add\{R,A,D_+,C_-\}=\X_3,\\
\ext\{R,A,D_-\}=\add\{R,A,D_-,C_+\}=\X_2,
\end{cases}\\
\begin{cases}
\ext\{R,C_+,C_-\}=\add\{R,C_+,C_-,A,N_1,\dots,N_l\}=\X_1,\\
\ext\{R,B,C_+\}=\add\{R,B,C_+,D_-\}=\X_5,\\
\ext\{R,B,C_-\}=\add\{R,B,C_-,D_+\}=\X_4.
\end{cases}
\end{array}
$$
Consequently, the subcategories $\X_1,\X_2,\X_3,\X_4,\X_5,\X_6$ are extension-closed.
It is observed from \eqref{19} that for each integer $1\le p\le6$ there exists no extension-closed subcategory $\C$ of $\cm(R)$ with $\X_p\subsetneq\C\subsetneq\cm(R)$.

We claim that for each $(p,q)\in\{(1,1),(2,1),(7,6),(8,6)\}$ there exists no extension-closed subcategory $\C$ of $\cm(R)$ with $\Y_p\subsetneq\C\subsetneq\X_q$.
Indeed, we have $\ext(\Y_1\cup\{C_-\})=\ext\{R,A,C_+,C_-\}\supseteq\ext\{R,C_+,C_-\}=\X_1$, so that $\ext(\Y_1\cup\{C_-\})=\X_1$.
Also, by \eqref{15}, for each $1\le j\le l$, it holds that
$$
\ext(\Y_1\cup\{N_j\})=\ext\{R,A,C_+,N_j\}\supseteq\ext\{R,C_+,N_{l+1}\}=\ext\{R,C_+,C_+\oplus C_-\}\supseteq\ext\{R,C_+,C_-\}=\X_1.
$$
Hence $\ext(\Y_1\cup\{N_j\})=\X_1$, and thus the claim follows for $(p,q)=(1,1)$.
A similar argument shows the claim for $(p,q)=(2,1)$.
Also, analogously, we can show that $\ext(\Y_7\cup\{D_-\})=\ext(\Y_7\cup\{M_j\})=\X_6$ for $1\le j\le l$, which deduces the claim for $(p,q)=(7,6)$.
The claim for $(p,q)=(8,6)$ is shown similarly.

It holds that $\ext(\ZZ_{11}\cup\{D_-\})=\ext\{B,D_+,D_-\}\supseteq\ext\{D_+,D_-\}=\Y_9$, and therefore $\ext(\ZZ_{11}\cup\{D_-\})=\Y_9$.
By using \eqref{15}, for each integer $1\le j\le l$ we have
$$
\ext(\ZZ_{11}\cup\{M_j\})=\ext\{B,D_+,M_j\}\supseteq\ext\{B,D_+,M_{l+1}\}=\ext\{B,D_+,D_+\oplus D_-\}\supseteq\ext\{D_+,D_-\}=\Y_9,
$$
whence $\ext(\ZZ_{11}\cup\{M_j\})=\Y_9$.
It is observed that there is no extension-closed subcategory $\C$ of $\cm(R)$ with $\ZZ_{11}\subsetneq\C\subsetneq\Y_9$.
By an analogous argument, there is no extension-closed subcategory $\C$ with $\ZZ_{12}\subsetneq\C\subsetneq\Y_9$.

Now we have obtained the Hasse diagram of $\E(R)$ as in the theorem.
\end{proof}

\begin{rem}
In Theorem \ref{17}(2), we can prove more.
Applying $\syz$ to the exact sequence $\varepsilon$, we get $N_l\in\ext\{C_+,C_-\}$.
It follows from \eqref{15} that $R\in\ext N_l\subseteq\ext\{C_+,C_-\}$.
Using \eqref{18}, we obtain equalities
$$
\X_1=\ext\{C_+,C_-\},\qquad
\X_5=\ext\{B,C_+\},\qquad
\X_4=\ext\{B,C_-\}.
$$
Using \eqref{15}, the isomorphisms $M_{l+1}\cong D_+\oplus D_-$, $N_{l+1}\cong C_+\oplus C_-$ and Lemma \ref{3}(3), for all $1\le j\le l$ we have
$$
\begin{array}{l}
\ext M_j=\add\{B,M_1,\dots,M_{l+1}\}=\add\{B,M_1,\dots,M_l,D_+,D_-\}=\ext\{D_+,D_-\}=\Y_9,\\
\ext N_j=\add\{R,A,N_1,\dots,N_{l+1}\}=\add\{R,A,N_1,\dots,N_l,C_+,C_-\}=\ext\{C_+,C_-\}=\X_1,\\
\ext\{R,M_j\}=\add\{R,B,M_1,\dots,M_l,D_+,D_-\}=\ext\{R,D_+,D_-\}=\X_6.
\end{array}
$$
\end{rem}

\begin{thm}\label{21}
Let $R$ be one of the following hypersurfaces, where $n$ is an integer.
$$
\begin{cases}
\a_n^0:\ k[\![x]\!]/(x^{n+1})&(n\ge1),\\
\a_n^2:\ k[\![x,y,z]\!]/(x^2+y^{n+1}+z^2)&(n\ge1),\\
\d_n^2:\ k[\![x,y,z]\!]/(x^2y+y^{n-1}+z^2)&(n\ge4),\\
\e_6^2:\ k[\![x,y,z]\!]/(x^3+y^4+z^2),\\
\e_7^2:\ k[\![x,y,z]\!]/(x^3+xy^3+z^2),\\
\e_8^2:\ k[\![x,y,z]\!]/(x^3+y^5+z^2).
\end{cases}
$$
Then the Hasse diagram of the poset $\E(R)$ is the following graph.
$$
\xymatrix@R-1pc@C-1pc{
\cm(R)\ar@{-}[d]\\
\add R\ar@{-}[d]\\
0
}
$$
\end{thm}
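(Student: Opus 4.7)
The plan is to prove the following for each of the six rings $R$ listed: every non-free indecomposable $M\in\ind\cm(R)$ satisfies $\ext M=\cm(R)$. Once this is shown, any non-zero extension-closed subcategory $\X\subseteq\cm(R)$ either contains no non-free indecomposable (so $\X=\add R$) or contains some non-free $M$, forcing $\X\supseteq\ext M=\cm(R)$. Combined with $\add R\neq\cm(R)$ (since $R$ is singular), this yields the three-element Hasse diagram. A convenient structural feature is that for all six rings the Auslander--Reiten translation satisfies $\tau\cong\mathrm{id}$ on $\lcm(R)$ by Lemma \ref{6} (trivially in the zero-dimensional case, and because $2-d=0$ in the 2-dimensional cases), so every AR sequence takes the symmetric form $0\to M\to E\to M\to 0$, with $E$ the direct sum of the AR neighbors of $M$.

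For $A_n^0=k[\![x]\!]/(x^{n+1})$ the argument is direct: the indecomposables are $R$ together with $R/(x^j)$ for $1\le j\le n$, and Yoshino's AR sequences \cite[(5.12)]{Y} take the form $0\to R/(x^j)\to R/(x^{j-1})\oplus R/(x^{j+1})\to R/(x^j)\to 0$ for $1\le j\le n-1$, with a boundary sequence $0\to R/(x^n)\to R\oplus R/(x^{n-1})\to R/(x^n)\to 0$ at $j=n$ in which $R$ itself appears as a direct summand of the middle term. Starting from any non-free $R/(x^j)$, iterated application produces every $R/(x^i)$ and $R$ in $\ext R/(x^j)$. For $A_n^2$, since $\sqrt{-1}\in k$ by the standing convention, the change of variables $u=x+\sqrt{-1}\,z$, $v=x-\sqrt{-1}\,z$ identifies $A_n^2\cong k[\![u,v,y]\!]/(uv+y^{n+1})$, which is the Kn\"orrer double of $A_n^0$. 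Kn\"orrer periodicity yields a triangle equivalence $\lcm(A_n^0)\simeq\lcm(A_n^2)$, transferring the generation statement to the stable category of $A_n^2$; the missing relation $R\in\ext M$ is supplied by the AR sequence ending in an indecomposable $N_0$ adjacent to $R$ in the extended Dynkin quiver $\widetilde A_n$, whose middle term has $R$ as a direct summand.

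For the remaining cases $D_n^2, E_6^2, E_7^2, E_8^2$, I would argue directly using Yoshino's explicit descriptions \cite[(9.11)--(9.15)]{Y} of the indecomposable MCMs and AR sequences, following the template of Theorems \ref{22}--\ref{25}: chain AR sequences through the exact-square calculus of Lemma \ref{5} to show that $\syz^2 k$ belongs to $\ext M$ for every non-free indecomposable $M$, and then appeal to \cite[Corollary 2.7]{stcm} to conclude $\ext M=\cm(R)$. Lemma \ref{4}(2) is a useful aid, since once one non-free indecomposable is shown to have $\ext M=\cm(R)$ its syzygies inherit the property. The main obstacle will be the combinatorial bookkeeping in the $E$-type 2-dimensional cases, which have a large number of indecomposables and intricate AR quivers; however, the structural pattern of chaining short exact sequences via exact squares, as carried out for the 1-dimensional $E$-type singularities, transfers to this setting.
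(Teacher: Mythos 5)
Your overall strategy coincides with the paper's: show that $\ext E=\cm(R)$ for every nonfree indecomposable $E$, using the fact that $\tau\cong\syz^{2-d}\cong\mathrm{id}$ in these cases, so that the AR sequences have the symmetric form $0\to E\to(\bigoplus\text{neighbors of }E)\to E\to0$. The paper's proof is just this observation plus citations (it handles $\a_n^0$ by quoting \cite[Proposition 5.6]{stcm} and the two-dimensional cases by reading off the AR quivers in \cite[(10.15)]{Y} --- note your reference (9.11)--(9.15) is to the one-dimensional matrix factorizations). Where you diverge, two remarks. First, your Kn\"orrer detour for $\a_n^2$ is both unnecessary and more delicate than you acknowledge: a triangle equivalence $\lcm(\a_n^0)\simeq\lcm(\a_n^2)$ transfers extension-closed subcategories of the \emph{stable} categories, and translating ``$\underline{\ext}\,M=\lcm(R)$'' back into ``every nonfree indecomposable lies in $\ext_{\cm(R)}M$'' requires splitting off the free summands that appear in the short exact sequences representing triangles (the kind of Gorenstein splitting argument the paper isolates in Lemmas \ref{3}(3) and \ref{10}); you correctly flag that $R\in\ext M$ needs the extra AR sequence at a neighbor of $R$, but the transfer of the nonfree part also needs justification. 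The direct route --- the AR quiver of $\a_n^2$ is the cycle $\widetilde{\a}_n$ with symmetric AR sequences, so chaining neighbors reaches every vertex including $R$ --- is shorter and is what the paper intends. Second, for $\d_n^2$ and $\e_\ast^2$ your plan to chain exact squares toward $\syz^2k$ and invoke \cite[Corollary 2.7]{stcm} would work but is overkill: since $\tau\cong\mathrm{id}$, each AR sequence already has the same module at both ends, so $\ext E$ swallows all AR neighbors of $E$ directly, and connectedness of the extended Dynkin quiver (with $R$ at the extending vertex) finishes the argument with no exact-square calculus at all; the combinatorial bookkeeping you anticipate for the $E$-type cases does not actually arise.
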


\begin{proof}
The case $\a_n^0$ follows from \cite[Proposition 5.6]{stcm}.
In the other cases, since $\dim R=2$, Lemma \ref{6} implies that the Auslander--Reiten translation functor is isomorphic to the identity functor.
From the Auslander--Reiten quivers exhibited in \cite[(10.15)]{Y} it is easy to observe that for any nonfree indecomposable maximal Cohen--Macaulay $R$-module $E$ the equality $\ext E=\cm(R)$ holds.
The assertion follows from this.
\end{proof}

\section{On the $\a_\infty,\d_\infty$-singularities with dimension at most two}

In this section, we give a complete classification of the extension-closed subcategories of $\cm_0(R)$ in the case where $R$ is a $P$-singularity with $P\in\{\a_\infty,\d_\infty\}$ and has dimension at most two.

\begin{thm}\label{26}
Let $R$ be one of the following hypersurfaces.
$$
\begin{cases}
\a_\infty^1:\ k[\![x,y]\!]/(x^2),\\
\d_\infty^2:\ k[\![x,y,z]\!]/(x^2y+z^2).
\end{cases}
$$
Then the Hasse diagram of the poset $\E_0(R)$ is the following.
$$
\xymatrix@R-1pc@C-1pc{
\cm_0(R)\ar@{-}[d]\\
\add R\ar@{-}[d]\\
0
}
$$
\end{thm}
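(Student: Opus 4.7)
My plan is to show that every non-free indecomposable $M \in \cm_0(R)$ satisfies $\ext M = \cm_0(R)$, paralleling the strategy of Theorem \ref{21}. Together with rigidity of $R$ (so $\add R$ is extension-closed by Lemma \ref{2}(1)), this forces $\E_0(R) = \{0, \add R, \cm_0(R)\}$.

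For $R = k[\![x,y]\!]/(x^2)$, I would use the Buchweitz--Greuel--Schreyer classification (as recorded in Yoshino's book), which gives $\ind\cm(R) = \{R, N, M_1, M_2, \ldots\}$ with $N = R/(x)$ and $M_n \cong (x, y^n)R \cong \cok\bigl(\begin{smallmatrix} x & -y^n \\ 0 & x \end{smallmatrix}\bigr)$. Localizing at the unique non-maximal prime $\p = (x)R$, one checks $(M_n)_\p \cong R_\p$ while $N_\p$ is the residue field of $R_\p$, so $\ind\cm_0(R) = \{R, M_1, M_2, \ldots\}$. The period-one relation $\syz M_n \cong M_n$ yields a short exact sequence $0 \to M_n \to R^2 \to M_n \to 0$ in $\cm_0(R)$, whence $R \in \ext M_n$. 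For $n \ge 2$ the Auslander--Reiten sequence $0 \to M_n \to M_{n-1} \oplus M_{n+1} \to M_n \to 0$ also lies in $\cm_0(R)$, and iteration gives $\{M_k : k \ge 1\} \subseteq \ext M_n$.

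The case $n = 1$ is the crux, since the AR sequence $0 \to M_1 \to N \oplus M_2 \to M_1 \to 0$ exits $\cm_0(R)$. I would handle it by exhibiting a nonzero class in $\Ext_R^1(M_n, M_n)$ via the cocycle $(0, x) \in M_n^2 = \Hom_R(R^2, M_n)$; this is a cocycle because $x \cdot x = 0$ in $M_n$, and a residue-mod-$x$ analysis shows it is not a coboundary. The resulting pushout is a short exact sequence $0 \to M_n \to E \to M_n \to 0$ in $\cm_0(R)$; substituting $g_2' = g_2 - y^n f_2$ to eliminate the $f_2$-column in the presentation matrix splits off a free summand and leaves the matrix $\bigl(\begin{smallmatrix} x & y^{2n} \\ 0 & x \end{smallmatrix}\bigr)$, yielding $E \cong R \oplus M_{2n}$. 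Taking $n = 1$ puts $M_2 \in \ext M_1$, after which the AR argument of the previous paragraph supplies the remaining $M_k$, completing $\ext M_1 = \cm_0(R)$.

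For $R = k[\![x,y,z]\!]/(x^2 y + z^2)$ of Krull dimension two, Lemma \ref{6} gives $\tau$ isomorphic to the identity on $\lcm(R)$, so every AR sequence takes the form $0 \to M \to E \to M \to 0$. I would combine the analogous BGS/Yoshino classification (possibly via Kn\"orrer-type descent from $\d_\infty^1 = k[\![x,y]\!]/(x^2 y)$) with a check of local freeness at the singular prime $\p = (x, z)R$ to identify $\ind\cm_0(R)$, then apply the same three-part strategy---syzygy sequence for $R$, AR sequences for interior indecomposables, and an explicit Ext construction at any boundary---to obtain $\ext M = \cm_0(R)$ for every non-free indecomposable $M$. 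The principal obstacle throughout is the boundary Ext computation: verifying non-triviality of the chosen cocycle and executing the change-of-generator reduction to identify the middle term as a concrete direct sum, which is particularly delicate in the $\d_\infty^2$ case owing to the richer module structure coming from the extra variable $z$.
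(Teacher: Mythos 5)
Your treatment of the $\a_\infty^1$ case is correct and is actually more self-contained than the paper's own argument: the paper merely observes that $\tau E\cong E$ for every nonfree indecomposable $E\in\cm_0(R)$ and then reads off $\ext E=\cm_0(R)$ from the Auslander--Reiten quivers recorded in Schreyer's paper, whereas you exhibit the needed conflations explicitly. Your identification $\ind\cm_0(R)=\{R,M_1,M_2,\dots\}$, the sequence $0\to M_n\to R^{\oplus2}\to M_n\to0$ coming from $\syz M_n\cong M_n$, and the non-split extension with middle term $R\oplus M_{2n}$ obtained from the cocycle $(0,x)$ all check out. One small correction: the middle term of the almost split sequence ending in $M_1$ cannot be $N\oplus M_2$, since localizing at $\p=(x)$ gives length $1+2=3$ rather than the required $4$; the sequence $0\to M_1\to R\oplus M_2\to M_1\to0$ that you construct is in fact the natural candidate for that AR sequence, so the ``crux'' you isolate is not really an obstruction, though your workaround is valid in any case.

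The genuine gap is the $\d_\infty^2$ case, which you leave entirely as a plan and yourself flag as unexecuted. This half is not a routine repetition of the $\a_\infty^1$ argument. First, ``Kn\"orrer-type descent'' from $\d_\infty^1=k[\![x,y]\!]/(x^2y)$ cannot work in any naive form: Kn\"orrer periodicity relates dimensions $d$ and $d+2$, not $d$ and $d+1$, and the two rings genuinely behave differently --- Theorem \ref{27} shows that $\E_0$ of $\d_\infty^1$ is a ten-element poset, with proper nonzero extension-closed subcategories generated by the rigid modules $A=R/(y)$ and $B=R/(x^2)$, while the present statement asserts that $\E_0$ of $\d_\infty^2$ is trivial. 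Any transfer argument would therefore have to explain why all of those intermediate subcategories disappear upon adding $z^2$, and your proposal does not address this. Second, to run your three-part strategy you would need the actual list of indecomposables of $k[\![x,y,z]\!]/(x^2y+z^2)$ that are locally free on the punctured spectrum (note the singular locus is the one-dimensional prime $(x,z)$, so $\cm_0(R)\subsetneq\cm(R)$ and this selection step is substantive), together with the almost split sequences $0\to E\to?\to E\to0$ (here $\tau\cong\operatorname{id}$ by Lemma \ref{6}), and then a verification that the quiver is connected strongly enough that $\ext E=\cm_0(R)$ for every such $E$ --- this is exactly what the paper extracts from \cite[(6.2)]{S}, in the same spirit as the proof of Theorem \ref{21}. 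Until that is carried out, half of the theorem remains unproved.
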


\begin{proof}
Let $E$ be a nonfree indecomposable maximal Cohen--Macaulay $R$-module that is locally free on the punctured spectrum of $R$.
Then there is an isomorphism $\tau E\cong E$; this follows from \cite[(6.1)]{S} in case $\a_\infty^1$, and from Lemma \ref{6} in case $\d_\infty^2$.
From the Auslander--Reiten quiver in \cite[(6.1)]{S} in case $\a_\infty^1$ and in \cite[(6.2)]{S} in case $\d_\infty^2$, it is easy to observe that $\ext E=\cm_0(R)$.
The assertion follows from this.
\end{proof}

\begin{thm}\label{30}
Let $R=k[\![x,y,z]\!]/(xy)$ be the $\a_\infty^2$-singularity.
For each integer $i>0$, let $I_i=(x,z^i)$ and $J_i=(y,z^i)$ be ideals of $R$.
Then the Hasse diagram of the poset $\E_0(R)$ is the following.
$$
\xymatrix@R-1pc@C-1pc{
\cm_0(R)\ar@{-}[d]\ar@{-}[rd]\\
\add\{R,I_1,I_2,\dots\}\ar@{-}[rd]&\add\{R,J_1,J_2,\dots\}\ar@{-}[d]\\
&\add R\ar@{-}[d]\\
&0
}
$$
\end{thm}

\begin{proof}
By Lemma \ref{6} we have $\tau E\cong E$ for every $E\in\ind\cm_0(R)\setminus\{R\}$.
From the Auslander--Reiten quiver given in \cite[(6.2)]{S}, we see that $\ind\cm_0(R)=\{R,I_i,J_i\mid i\in\Z_{>0}\}$ and that
$$
R\in\ext I_1=\ext I_2=\ext I_3=\cdots,\qquad
R\in\ext J_1=\ext J_2=\ext J_3=\cdots.
$$
Hence $\ext\{I_p,J_q\}=\cm_0(R)$ for all integers $p,q>0$.
To get the Hasse diagram in the assertion, it suffices to show that $J_q\notin\ext I_p$ (and then by symmetry we get $I_q\notin\ext J_p$)  for all integers $p,q>0$.
We have only to verify $J_1\notin\ext I_1$, as if $J_q\in\ext I_p$, then $J_1\in\ext J_1=\ext J_q\subseteq\ext I_p=\ext I_1$.

Now, assume that $J_1\in\ext I_1$.
It follows from \cite[Propositions 2.2(1) and 2.4]{tes} that there exist an $R$-module $M$ and a filtration $0=M_0\subsetneq M_1\subsetneq\cdots\subsetneq M_n=M$ of $R$-submodules of $M$ with $M_i/M_{i-1}\cong I_1$ for each $1\le i\le n$ such that $J_1$ is a direct summand of $M$.
We establishes two claims.

\begin{claim}\label{34}
Let $N$ be an $R$-module.
Denote by $R^\times$ the set of units of $R$.
Put $A_j=\left(
\begin{smallmatrix}
y&-z^j\\
0&x
\end{smallmatrix}
\right)$ for each $j>0$.
\begin{enumerate}[(1)]
\item
The following are equivalent for all positive integers $r$ and $l_1,\dots,l_r$.
\begin{enumerate}[(a)]
\item
There exists an exact sequence $0\to I_{l_1}\oplus\cdots\oplus I_{l_r}\to N\to I_1\to0$.
\item
The module $N$ is isomorphic to the cokernel of the $R$-linear map given by the matrix $D(u_1,\dots,u_r)=\left(
\begin{smallmatrix}
A_{l_1}&O&\cdots&O&U_1\\
O&A_{l_2}&\cdots&O&U_2\\
\cdots&\cdots&\cdots&\cdots&\cdots\\
O&O&\cdots&A_{l_r}&U_r\\
O&O&\cdots&O&A_1
\end{smallmatrix}
\right)$, where $U_i=\left(
\begin{smallmatrix}
0&u_i\\
0&0
\end{smallmatrix}
\right)$ with $u_i\in R^\times\cup\{0\}$ for each $1\le i\le r$.
\end{enumerate}
\item
Assume that condition (1b) is satisfied.
\begin{enumerate}[(a)]
\item
If $u_j=0$ for some $1\le j\le r$, then there exist an exact sequence $0\to I_{l_1}\oplus\cdots\oplus I_{l_{j-1}}\oplus I_{l_{j+1}}\oplus\cdots\oplus I_{l_r}\to N'\to I_1\to0$ and an isomorphism $N\cong I_{l_j}\oplus N'$.
\item
If $l_j\le l_h$ and $u_h\in R^\times$ for some $1\le j\ne h\le r$, then $D(u_1,\dots,u_r)\cong D(u_1,\dots,u_{j-1},0,u_{j+1},\dots,u_r)$.
\item
If $r=1$ and $u_1\in R^\times$, then there exists an isomorphism $N\cong I_{l_1+1}\oplus R$.
\item
The module $N$ is isomorphic to either $I_{l_1}\oplus\cdots\oplus I_{l_r}\oplus I_1$ or $I_{l_1}\oplus\cdots\oplus I_{l_{s-1}}\oplus I_{l_s+1}\oplus I_{l_{s+1}}\oplus I_{l_r}\oplus R$ for some $1\le s\le r$.
\end{enumerate}
\end{enumerate}
\end{claim}

\begin{proof}[Proof of Claim]
(1)
(a)\,$\Rightarrow$\,(b):
Applying the horseshoe lemma repeatedly gives rise to a commutative diagram
\begin{equation}\label{36}
\xymatrix@R-1pc@C+3.5pc{
&0\ar[d]&0\ar[d]&0\ar[d]&0\ar[d]&0\ar[d]\\
0&\bigoplus_{i=1}^rI_{l_i}\ar[l]\ar[d]&F_0\ar[l]_-{
\bigoplus_{i=1}^r\left(
\begin{smallmatrix}
x&z^{l_i}
\end{smallmatrix}
\right)
}\ar[d]&F_1\ar[l]_{\bigoplus_{i=1}^rA_{l_i}}\ar[d]&F_2\ar[l]_{\bigoplus_{i=1}^rB_{l_i}}\ar[d]&F_3\ar[l]_{\bigoplus_{i=1}^rA_{l_i}}\ar[d]\\
0&N\ar[l]\ar[d]&E_0\ar[l]\ar[d]&E_1\ar[l]_H\ar[d]&E_2\ar[l]\ar[d]&E_3\ar[l]\ar[d]\\
0&I_1\ar[l]\ar[d]&G_0\ar[l]_{
\left(
\begin{smallmatrix}
x&z
\end{smallmatrix}
\right)
}\ar[d]&G_1\ar[l]_{A_1}\ar[d]&G_2\ar[l]_{B_1}\ar[d]&G_3\ar[l]_{A_1}\ar[d]\\
&0&0&0&0&0
}
\end{equation}
with exact rows and columns, where $F_h=R^{\oplus2r}$, $G_h=R^{\oplus2}$, $E_h=F_h\oplus G_h=R^{\oplus(2r+2)}$ for $0\le h\le3$, $B_j=\left(
\begin{smallmatrix}
x&z^j\\
0&y
\end{smallmatrix}
\right)$ for $j>0$, and $H=\left(
\begin{smallmatrix}
A_{l_1}&O&\cdots&O&C_1\\
O&A_{l_2}&\cdots&O&C_2\\
\cdots&\cdots&\cdots&\cdots&\cdots\\
O&O&\cdots&A_{l_r}&C_r\\
O&O&\cdots&O&A_1
\end{smallmatrix}
\right)$ with $C_i=\left(
\begin{smallmatrix}
a_i&b_i\\
c_i&d_i
\end{smallmatrix}
\right)$ and $a_i,b_i,c_i,d_i\in R$ for $1\le i\le r$.
We do diagram chasing.
Take any vector $v\in G_2$.
The vector $B_1v\in G_1$ comes from $\binom{\zv}{B_1v}\in E_1=F_1\oplus G_1$.
Since $A_1B_1v=\zv$, the map $H$ sends $\binom{\zv}{B_1v}$ to $\left(\begin{smallmatrix}C_1B_1v\\
\cdots\\
C_rB_1v\\
\zv\end{smallmatrix}\right)\in E_0=F_0\oplus G_0$, which comes from $\left(\begin{smallmatrix}C_1B_1v\\
\cdots\\
C_rB_1v
\end{smallmatrix}\right)\in F_0$.
This goes to $\zv\in\bigoplus_{i=1}^rI_{l_i}$ by the map $\bigoplus_{i=1}^r\left(
\begin{smallmatrix}
x&z^{l_i}
\end{smallmatrix}
\right)$ by the snake lemma, so that it belongs to the image of $\bigoplus_{i=1}^rA_{l_i}$, which coincides with the kernel of $\bigoplus_{i=1}^rB_{l_i}$.
Hence $\zv=\left(\begin{smallmatrix}B_{l_1}&&\\
&\cdots&\\
&&B_{l_r}\end{smallmatrix}\right)\left(\begin{smallmatrix}C_1B_1v\\
\cdots\\
C_rB_1v
\end{smallmatrix}\right)=\left(\begin{smallmatrix}B_{l_1}C_1B_1v\\
\cdots\\
B_{l_r}C_rB_1v
\end{smallmatrix}\right)$.
Since this holds for any vector $v\in G_2$, it is observed that $B_{l_i}C_iB_1$ is a zero matrix for every $1\le i\le r$.
Thus,
\begin{equation}\label{35}
\left(
\begin{smallmatrix}
0&0\\
0&0\end{smallmatrix}
\right)=\left(
\begin{smallmatrix}
x&z^{l_i}\\
0&y
\end{smallmatrix}
\right)\left(
\begin{smallmatrix}
a_i&b_i\\
c_i&d_i
\end{smallmatrix}
\right)\left(
\begin{smallmatrix}
x&z\\
0&y
\end{smallmatrix}
\right)=\left(
\begin{smallmatrix}
x(xa_i+z^{l_i}c_i)&z(xa_i+z^{l_i}c_i+yz^{l_i-1}d_i)\\
0&y(zc_i+yd_i)
\end{smallmatrix}
\right)\quad\text{for every integer $1\le i\le r$}.
\end{equation}
Fix $1\le i\le r$.
We see that $xa_i+z^{l_i}c_i+yz^{l_i-1}d_i=0$, $xa_i+z^{l_i}c_i=ye_i$ and $zc_i+yd_i=xf_i$ for some $e_i,f_i\in R$.
Note that the sequence
$
R\xleftarrow{\mu=\left(\begin{smallmatrix}x&y&z\end{smallmatrix}\right)}R^{\oplus3}\xleftarrow{
\nu=\left(
\begin{smallmatrix}
y&0&z&0\\
0&x&0&z\\
0&0&-x&-y
\end{smallmatrix}
\right)
}R^{\oplus4}
$
is exact; it is part of a minimal free resolution of the residue field $k$.
We have $xf_i+y(-d_i)+z(-c_i)=0$, so that $\left(\begin{smallmatrix}
f_i\\
-d_i\\
-c_i
\end{smallmatrix}\right)\in R^{\oplus3}$ belongs to the kernel of the above map $\mu$, which is equal to the image of the above map $\nu$.
Hence there exist elements $q_i,s_i,t_i,p_i\in R$ such that
$
\left(\begin{smallmatrix}
f_i\\
-d_i\\
-c_i
\end{smallmatrix}\right)=\left(
\begin{smallmatrix}
y&0&z&0\\
0&x&0&z\\
0&0&-x&-y
\end{smallmatrix}
\right)\left(\begin{smallmatrix}
q_i\\
s_i\\
t_i\\
p_i
\end{smallmatrix}\right)
=\left(\begin{smallmatrix}
yq_i+zt_i\\
xs_i+zp_i\\
-xt_i-yp_i
\end{smallmatrix}\right).
$
We thus get $c_i=xt_i+yp_i$ and $d_i=-xs_i-zp_i$.
As $xa_i+z^{l_i}(xt_i+yp_i)=ye_i$, we have $x(a_i+z^{l_i}t_i)=y(e_i-z^{l_i}p_i)\in(x)\cap(y)=0$.
Hence $a_i+z^{l_i}t_i=yg_i$ for some $g_i\in R$, so that $a_i=yg_i-z^{l_i}t_i$.
Write $b_i=u_i+x\alpha_i+y\beta_i+z\gamma_i$ with $u_i\in R^\times\cup\{0\}$ and $\alpha_i,\beta_i,\gamma_i\in R$.
It follows that
$$
\begin{array}{l}
H=\left(
\begin{smallmatrix}
A_{l_1}&&&C_1\\
&\cdots&&\cdots\\
&&A_{l_r}&C_r\\
&&&A_1
\end{smallmatrix}
\right)=\left(
\begin{smallmatrix}
y&-z^{l_1}&&&&a_1&b_1\\
0&x&&&&c_1&d_1\\
&&\cdots&&&\cdots&\cdots\\
&&&y&-z^{l_r}&a_r&b_r\\
&&&0&x&c_r&d_r\\
&&&&&y&-z\\
&&&&&0&x
\end{smallmatrix}
\right)=\left(
\begin{smallmatrix}
y&-z^{l_1}&&&&yg_1-z^{l_1}t_1&b_1\\
0&x&&&&xt_1+yp_1&-xs_1-zp_1\\
&&\cdots&&&\cdots&\cdots\\
&&&y&-z^{l_r}&yg_r-z^{l_r}t_r&b_r\\
&&&0&x&xt_r+yp_r&-xs_r-zp_r\\
&&&&&y&-z\\
&&&&&0&x
\end{smallmatrix}
\right)\\
\phantom{H}\cong\left(
\begin{smallmatrix}
y&-z^{l_1}&&&&-z^{l_1}t_1&b_1\\
0&x&&&&xt_1+yp_1&-xs_1-zp_1\\
&&\cdots&&&\cdots&\cdots\\
&&&y&-z^{l_r}&-z^{l_r}t_r&b_r\\
&&&0&x&xt_r+yp_r&-xs_r-zp_r\\
&&&&&y&-z\\
&&&&&0&x
\end{smallmatrix}
\right)\cong\left(
\begin{smallmatrix}
y&-z^{l_1}&&&&-z^{l_1}t_1&b_1\\
0&x&&&&xt_1+yp_1&-zp_1\\
&&\cdots&&&\cdots&\cdots\\
&&&y&-z^{l_r}&-z^{l_r}t_r&b_r\\
&&&0&x&xt_r+yp_r&-zp_r\\
&&&&&y&-z\\
&&&&&0&x
\end{smallmatrix}
\right)\\
\phantom{H}\cong\left(
\begin{smallmatrix}
y&-z^{l_1}&&&&0&b_1\\
0&x&&&&yp_1&-zp_1\\
&&\cdots&&&\cdots&\cdots\\
&&&y&-z^{l_r}&0&b_r\\
&&&0&x&yp_r&-zp_r\\
&&&&&y&-z\\
&&&&&0&x
\end{smallmatrix}
\right)\cong\left(
\begin{smallmatrix}
y&-z^{l_1}&&&&0&b_1\\
0&x&&&&0&0\\
&&\cdots&&&\cdots&\cdots\\
&&&y&-z^{l_r}&0&b_r\\
&&&0&x&0&0\\
&&&&&y&-z\\
&&&&&0&x
\end{smallmatrix}
\right)=\left(
\begin{smallmatrix}
y&-z^{l_1}&&&&0&u_1+x\alpha_1+y\beta_1+z\gamma_1\\
0&x&&&&0&0\\
&&\cdots&&&\cdots&\cdots\\
&&&y&-z^{l_r}&0&u_r+x\alpha_r+y\beta_r+z\gamma_r\\
&&&0&x&0&0\\
&&&&&y&-z\\
&&&&&0&x
\end{smallmatrix}
\right)\\
\phantom{H}\cong\left(
\begin{smallmatrix}
y&-z^{l_1}&&&&0&u_1+z\gamma_1\\
0&x&&&&0&0\\
&&\cdots&&&\cdots&\cdots\\
&&&y&-z^{l_r}&0&u_r+z\gamma_r\\
&&&0&x&0&0\\
&&&&&y&-z\\
&&&&&0&x
\end{smallmatrix}
\right)\cong\left(
\begin{smallmatrix}
y&-z^{l_1}&&&&y\gamma_1&u_1\\
0&x&&&&0&0\\
&&\cdots&&&\cdots&\cdots\\
&&&y&-z^{l_r}&y\gamma_r&u_r\\
&&&0&x&0&0\\
&&&&&y&-z\\
&&&&&0&x
\end{smallmatrix}
\right)\cong\left(
\begin{smallmatrix}
y&-z^{l_1}&&&&0&u_1\\
0&x&&&&0&0\\
&&\cdots&&&\cdots&\cdots\\
&&&y&-z^{l_r}&0&u_r\\
&&&0&x&0&0\\
&&&&&y&-z\\
&&&&&0&x
\end{smallmatrix}
\right).
\end{array}
$$

(b)\,$\Rightarrow$\,(a):
For each integer $1\le i\le r$, let $a_i=c_i=d_i=0$ and $b_i=u_i\in R^\times\cup\{0\}$.
We easily verify that the first equality in \eqref{35} holds.
This means that \eqref{36} is a commutative diagram with exact rows and columns if we put $C_i=\left(
\begin{smallmatrix}
a_i&b_i\\
c_i&d_i
\end{smallmatrix}
\right)=\left(
\begin{smallmatrix}
0&u_i\\
0&0
\end{smallmatrix}
\right)=U_i$ for every integer $1\le i\le r$.

(2)(a) We may assume $j=1$.
Then $D(u_1,\dots,u_r)=D(0,u_2,\dots,u_r)=A_{l_1}\oplus D(u_2,\dots,u_r)$.
Set $N'=\cok D(u_2,\dots,u_r)$.
We have $N\cong I_{l_1}\oplus N'$.
By (1) there is an exact sequence $0\to I_{l_2}\oplus\cdots\oplus I_{l_r}\to N'\to I_1\to0$.

(b) Replacing $A_{l_1},\dots,A_{l_r}$ if necessary, we may assume $j=1$ and $h=2$, so $l_2-l_1\ge0$.
We have
$$
\begin{array}{l}
D(u_1,u_2,\dots,u_r)=\left(
\begin{smallmatrix}
y&-z^{l_1}&0&0&\cdots&0&u_1\\
0&x&0&0&\cdots&0&0\\
0&0&y&-z^{l_2}&\cdots&0&u_2\\
0&0&0&x&\cdots&0&0\\
\cdots&\cdots&\cdots&\cdots&\cdots&\cdots&\cdots
\end{smallmatrix}
\right)\cong\left(
\begin{smallmatrix}
y&-z^{l_1}&-u_2^{-1}u_1y&u_2^{-1}u_1z^{l_2}&\cdots&0&0\\
0&x&0&0&\cdots&0&0\\
0&0&y&-z^{l_2}&\cdots&0&u_2\\
0&0&0&x&\cdots&0&0\\
\cdots&\cdots&\cdots&\cdots&\cdots&\cdots&\cdots
\end{smallmatrix}
\right)\\
\phantom{D(u_1,u_2,\dots,u_r)}
\cong\left(
\begin{smallmatrix}
y&-z^{l_1}&0&u_2^{-1}u_1z^{l_2}&\cdots&0&0\\
0&x&0&0&\cdots&0&0\\
0&0&y&-z^{l_2}&\cdots&0&u_2\\
0&0&0&x&\cdots&0&0\\
\cdots&\cdots&\cdots&\cdots&\cdots&\cdots&\cdots
\end{smallmatrix}
\right)\cong\left(
\begin{smallmatrix}
y&-z^{l_1}&0&0&\cdots&0&0\\
0&x&0&u_2^{-1}u_1z^{l_2-l_1}x&\cdots&0&0\\
0&0&y&-z^{l_2}&\cdots&0&u_2\\
0&0&0&x&\cdots&0&0\\
\cdots&\cdots&\cdots&\cdots&\cdots&\cdots&\cdots
\end{smallmatrix}
\right)\\
\phantom{D(u_1,u_2,\dots,u_r)}
\cong\left(
\begin{smallmatrix}
y&-z^{l_1}&0&0&\cdots&0&0\\
0&x&0&0&\cdots&0&0\\
0&0&y&-z^{l_2}&\cdots&0&u_2\\
0&0&0&x&\cdots&0&0\\
\cdots&\cdots&\cdots&\cdots&\cdots&\cdots&\cdots
\end{smallmatrix}
\right)=D(0,u_2,\dots,u_r).
\end{array}
$$

(c) Put $p=l_1$ and $h=u_1$.
Since $h$ is a unit of $R$, we have
$$
\begin{array}{l}
D(u_1)=D(h)=\left(
\begin{smallmatrix}
y&-z^p&0&h\\
0&x&0&0\\
0&0&y&-z\\
0&0&0&x
\end{smallmatrix}
\right)\cong\left(
\begin{smallmatrix}
0&0&0&h\\
0&x&0&0\\
h^{-1}yz&-h^{-1}z^{p+1}&y&-z\\
0&h^{-1}xz^p&0&x
\end{smallmatrix}
\right)\cong\left(
\begin{smallmatrix}
0&0&0&1\\
0&x&0&0\\
h^{-1}yz&-h^{-1}z^{p+1}&y&0\\
0&h^{-1}xz^p&0&0
\end{smallmatrix}
\right)\\
\phantom{D(u_1)=D(h)=\left(
\begin{smallmatrix}
y&-z^p&0&h\\
0&x&0&0\\
0&0&y&-z\\
0&0&0&x
\end{smallmatrix}
\right)}
\cong\left(
\begin{smallmatrix}
0&0&0&1\\
0&x&0&0\\
0&-h^{-1}z^{p+1}&y&0\\
0&0&0&0
\end{smallmatrix}
\right)\cong\left(
\begin{smallmatrix}
y&-z^{p+1}&0&0\\
0&x&0&0\\
0&0&0&0\\
0&0&0&1
\end{smallmatrix}
\right).
\end{array}
$$
Therefore, we obtain a desired isomorphism $N\cong I_{p+1}\oplus R$.

(d) The assertion follows from iterated application of (a), (b), (c) and (1).
\renewcommand{\qedsymbol}{$\square$}
\end{proof}

\begin{claim}\label{33}
For every integer $1\le q\le n$ one has $M_q\cong I_{l_1}\oplus\cdots\oplus I_{l_q}$, where $0\le l_1,\dots,l_q\le q$ and $I_0:=R$.
\end{claim}

\begin{proof}[Proof of Claim]
We use induction on $q$.
There is an isomorphism $M_1\cong I_1$, so we are done by putting $l_1=1$ when $q=1$.
We consider the case $q\ge2$.
There exists an exact sequence $0\to M_{q-1}\to M_q\to I_1\to0$.
The induction hypothesis implies $M_{q-1}\cong R^{\oplus e}\oplus I_{l_1}\oplus\cdots\oplus I_{l_r}$ with $e\ge0$, $r=q-1-e$ and $1\le l_1,\dots,l_r\le q-1$.
The same argument as in the proof of Lemma \ref{3}(3) shows that there is an exact sequence $0\to L\to K\to I_1\to0$ such that $L=I_{l_1}\oplus\cdots\oplus I_{l_r}$ and $R^{\oplus e}\oplus K\cong M_q$.
It is observed from Claim \ref{33}(2d) that $K\cong I_{h_1}\oplus\cdots\oplus I_{h_{r+1}}$ for some $0\le h_1,\dots,h_{r+1}\le q$.
Now the assertion follows.
\renewcommand{\qedsymbol}{$\square$}
\end{proof}

Claim \ref{33} yields an isomorphism $M=M_n\cong I_{l_1}\oplus\cdots\oplus I_{l_n}$ with $0\le l_1,\dots,l_n\le n$.
This is a contrariction, since $J_1$ is not a direct summand of this direct sum; note that $J_1,R,I_{l_1},\dots,I_{l_n}$ are all indecomposable $R$-modules.
This contradiction shows that $J_1$ is not in $\ext I_1$, and now the proof of the theorem is completed.
\end{proof}

\begin{thm}\label{27}
Let $R=k[\![x,y]\!]/(x^2y)$ be the $\d_\infty^1$-singularity.
Put $A=R/(y)$ and $B=R/(x^2)$.
Moreover, set $M_j=\cok\left(\begin{smallmatrix}x&y^j\\0&-x\end{smallmatrix}\right)$ and $N_j=\cok\left(\begin{smallmatrix}xy&y^{j+1}\\0&-xy\end{smallmatrix}\right)$ for each integer $j\ge0$.
Then the Hasse diagram of $\E_0(R)$ is:
$$
\xymatrix@R-1pc@C-1pc{
\cm_0(R)\ar@{-}[d]\ar@{-}[rd]\\
\add\{R,A,N_1,N_2,\dots\}\ar@{-}[d]&\add\{R,B,M_1,M_2,\dots\}\ar@{-}[d]\ar@{-}[rd]\\
\add\{R,A\}\ar@{-}[d]\ar@{-}[rd]&\add\{R,B\}\ar@{-}[d]\ar@{-}[rd]&\add\{B,M_1,M_2,\dots\}\ar@{-}[d]\\
\add A\ar@{-}[rd]&\add R\ar@{-}[d]&\add B\ar@{-}[ld]\\
&0
}
$$
\end{thm}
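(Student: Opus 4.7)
The strategy closely mirrors the proof of Theorem \ref{17}(1) (the $\d_n^1$-case with $n$ odd), translated to the countably infinite setting and restricted to $\cm_0(R)$. First I would record the structural data. The matrix-factorization identity
$$
\begin{pmatrix} x & y^j \\ 0 & -x \end{pmatrix} \begin{pmatrix} xy & y^{j+1} \\ 0 & -xy \end{pmatrix} = x^2 y \cdot I_2
$$
gives $\syz M_j \cong N_j$ and $\syz N_j \cong M_j$, while elementary resolutions yield $\syz A \cong A$ and $\syz B \cong B$ (both periodic of period one, reflecting the reducibility of $R$). Direct calculation with the defining matrices shows $M_0 \cong B$ and $N_0 \cong R \oplus A$. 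Combined with Schreyer's classification \cite[(6.2)]{S} and Lemma \ref{6} (which gives $\tau \cong \syz$ since $\dim R = 1$), this identifies $\ind\cm_0(R) = \{R, A, B\} \cup \{M_j, N_j \mid j \ge 1\}$ together with the AR sequences around each pair $(M_j, N_j)$. Applying Lemma \ref{1} to the factorization $x^2 y = x^2 \cdot y$ (with $x^2, y$ a regular sequence on $k[\![x,y]\!]$), both $A = R/(y)$ and $B = R/(x^2)$ are rigid, so by Lemma \ref{2} the subcategories $0, \add R, \add A, \add B, \add\{R,A\}, \add\{R,B\}$ are extension-closed, filling in the bottom portion of the Hasse diagram.

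Second, pasting the AR sequences via Lemma \ref{5} produces short exact sequences of ladder type
$$
0 \to M_j \to M_{j-1} \oplus M_{j+1} \to M_j \to 0, \qquad 0 \to N_j \to N_{j-1} \oplus N_{j+1} \to N_j \to 0 \qquad (j \ge 1),
$$
together with the boundary sequences at $j=0$ involving $A, B, R$. These force $\ext M_1 = \ext M_2 = \cdots \supseteq \add\{B, M_1, M_2, \dots\}$ and $\ext N_1 = \ext N_2 = \cdots \supseteq \add\{R, A, N_1, N_2, \dots\}$. For the reverse containment on the $M$-side I would localize at the minimal prime $\p := yR$: the matrix presentations give $B_\p = (M_j)_\p = 0$, while $R_\p, A_\p$ and $(N_j)_\p$ are all nonzero, so Lemma \ref{3}(2) forces $\ext M_j \subseteq \add\{B, M_1, M_2, \dots\}$. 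For the $N$-side, apply Lemma \ref{10} with $\X = \{M_j\}$ to transport this equality to syzygies, then use the explicit boundary AR sequence at $j=0$ (which places $N_0 = R \oplus A$ into an extension with $M_0 = B$) together with a rank analysis at the other minimal prime $xR$ to force the exact identification $\ext N_j = \add\{R, A, N_1, N_2, \dots\}$. Lemma \ref{3}(3) then upgrades this, yielding $\ext\{R, M_j\} = \add\{R, B, M_1, M_2, \dots\}$.

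Third, the top of the diagram is closed by showing $\ext\{A, B\} = \ext\{M_j, N_{j'}\} = \cm_0(R)$ for all $j, j' \ge 1$. These follow from AR sequences whose middle terms involve both families, combined with the established equalities from Step 2. As a result, any extension-closed subcategory strictly containing either $\add\{R, B, M_1, M_2, \dots\}$ or $\add\{R, A, N_1, N_2, \dots\}$ equals $\cm_0(R)$, and a saturation check in the style of the chains \eqref{13} and \eqref{14} in the proof of Theorem \ref{25} rules out intermediate subcategories between adjacent nodes of the Hasse diagram.

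The main obstacle I anticipate is in Step 2, specifically the precise identification of $\ext N_j$. Unlike the $\d_n^1$-case, where $\syz A \cong B$ provides a clean duality that makes Lemma \ref{4}(1) sufficient, here $R$ is reducible and $\syz$ fixes both $A$ and $B$, so one must combine Lemma \ref{10} with an explicit boundary AR sequence (and possibly a second localization, at $xR$) to pin down which indecomposables can and cannot occur. The infinite bookkeeping---checking that every $M_i$ and $N_i$ lies in the appropriate $\ext$-closure via a bounded chain of extensions---is then routine once the ladder argument and Schreyer's AR quiver are in hand.
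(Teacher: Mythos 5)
Your overall strategy (mirror the proof of Theorem \ref{17}(1): rigidity of $A,B$ via Lemmas \ref{1} and \ref{2} for the bottom of the diagram, ladder sequences from pasted exact squares for the chains $\ext M_1=\ext M_2=\cdots$ and $\ext N_1=\ext N_2=\cdots$, localization at $\p=(y)$ to cap $\ext M_j$, and saturation checks at the top) is exactly the paper's, but there are two concrete errors that break your Step 2. First, your syzygy computation is wrong: over $R=S/(x^2\cdot y)$ with $x^2,y$ a regular sequence on $S=k[\![x,y]\!]$, one has $\syz(R/(y))=(y)R\cong R/(0:_Ry)=R/(x^2)$, i.e.\ $\syz A\cong B$ and $\syz B\cong A$ (period two), not $\syz A\cong A$ and $\syz B\cong B$. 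The ``clean duality'' you say is missing here is in fact present, and the paper uses it exactly as in the $\d_n^1$ case: for indecomposable $E\in\ext N_j$, Lemma \ref{4}(1) gives $\syz E\in\ext\syz N_j=\ext M_j=\add\{B,M_1,M_2,\dots\}$, and since $\syz^{-1}B\cong A$ and $\syz^{-1}M_i\cong N_i$ this forces $E\in\{R,A,N_1,N_2,\dots\}$. Second, the workaround you propose instead --- a rank analysis at the other minimal prime $(x)$ --- does not do the job: in $R_{(x)}\cong k[\![x,y]\!]_{(x)}/(x^2)$ the element $y$ is a unit, and a short matrix computation shows $B_{(x)}$, $(M_j)_{(x)}$ and $(N_j)_{(x)}$ are all isomorphic to $R_{(x)}$ (same nonzero length), so localizing at $(x)$ gives no obstruction to $B$ or $M_i$ lying in $\ext N_j$. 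Your Lemma \ref{10} detour would also need the correct identity $\syz B\cong A$ to land on $\add\{R,A,N_1,N_2,\dots\}$ rather than a set containing $B$.

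A further gap: your identification $\ind\cm_0(R)=\{R,A,B\}\cup\{M_j,N_j\mid j\ge1\}$ omits the two families $X_j=\cok\left(\begin{smallmatrix}x&y^j\\0&-xy\end{smallmatrix}\right)$ and $Y_j=\cok\left(\begin{smallmatrix}xy&y^j\\0&-x\end{smallmatrix}\right)$ appearing in Schreyer's list. These are not bookkeeping extras: the AR sequences have middle terms built from $X_j,Y_j$ (e.g.\ $0\to M_j\to X_j\oplus Y_{j+1}\to N_j\to0$), the ladder sequences are obtained by composing exact squares through them via Lemma \ref{5}, and the crucial input $\ext Y_1=\cm_0(R)$ comes from $Y_1\cong(x,y)R=\m$ together with \cite[Corollary 2.6]{stcm}; this is what ultimately forces $\ext\{M_j,N_h\}=\ext\{A,B\}=\cm_0(R)$ at the top of the diagram. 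Without the $X_j,Y_j$ the quiver, the pasting argument, and the verification that the listed subcategories exhaust $\E_0(R)$ all fail.
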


\begin{proof}
The proof goes along the same lines as in the proof of Theorem \ref{17}(1).
By Lemmas \ref{2} and \ref{1}, we get the subdiagram of the Hasse diagram of $\E_0(R)$:
$$
\xymatrix@R-1pc@C+1pc{
\add\{R,A\}\ar@{-}[d]\ar@{-}[rd]&\add\{R,B\}\ar@{-}[d]\ar@{-}[rd]\\
\add A\ar@{-}[rd]&\add R\ar@{-}[d]&\add B\ar@{-}[ld]\\
&0
}
$$
Put $X_j=\cok\left(\begin{smallmatrix}x&y^j\\0&-xy\end{smallmatrix}\right)$ and $Y_j=\cok\left(\begin{smallmatrix}xy&y^j\\0&-x\end{smallmatrix}\right)$ for each integer $j\ge0$.
By \cite[(6.1)]{S} and Lemma \ref{6}, we have that $\ind\cm_0(R)=\{R,A,B,X_j,Y_j,M_j,N_j\mid j\in\Z_{>0}\}$ and there exist exact sequences
$$
\begin{array}{l}
0\to Y_{j+1}\to M_{j+1}\oplus N_j\to X_{j+1}\to0,\qquad
0\to M_j\to X_j\oplus Y_{j+1}\to N_j\to0,\\
0\to X_{j+1}\to N_{j+1}\oplus M_j\to Y_{j+1}\to0,\qquad
\sigma_j:0\to N_j\to Y_j\oplus X_{j+1}\to M_j\to0
\end{array}
\quad(j\in\N),
$$
where the maps $F\to G$ with $F,G\in\ind\cm_0(R)$ are all the same, and isomorphisms $M_0\cong B$, $N_0\cong A\oplus R$, $X_0\cong Y_0\cong R$, $\syz A\cong B$, $\syz X_j\cong Y_j\cong(x,y^j)R$ and $\syz N_j\cong M_j\cong(xy,y^{j+1})R$ for all $j\in\Z_{>0}$.
For each $j\in\Z_{>0}$ there are exact squares in the left below, which produce exact sequences in the right below.
$$
\xymatrix@R-1.3pc@C-.2pc{
M_j\ar[r]\ar[d]&X_j\ar[r]\ar[d]&M_{j-1}\ar[r]\ar[d]&X_{j-1}\ar[d]&&0\to M_j\to M_{j+1}\oplus M_{j-1}\to M_j\to0,\\
Y_{j+1}\ar[r]\ar[d]&N_j\ar[r]\ar[d]&Y_j\ar[r]\ar[d]&N_{j-1}\ar[r]\ar[d]&Y_{j-1}\ar[d]&0\to X_j\to X_{j+1}\oplus X_{j-1}\to X_j\to0,\\
M_{j+1}\ar[r]&X_{j+1}\ar[r]\ar[d]&M_j\ar[r]\ar[d]&X_j\ar[r]\ar[d]&M_{j-1}\ar[d]&0\to N_j\to N_{j+1}\oplus N_{j-1}\to N_j\to0,\\
&N_{j+1}\ar[r]&Y_{j+1}\ar[r]&N_j\ar[r]&Y_j&0\to Y_j\to Y_{j+1}\oplus Y_{j-1}\to Y_j\to0.}
$$
These four exact sequences show that
$$
\begin{array}{l}
B=M_0\in\ext M_1=\ext M_2=\ext M_3=\cdots,\qquad
\ext X_1=\ext X_2=\ext X_3=\cdots,\\
A\oplus R=N_0\in\ext N_1=\ext N_2=\ext N_3=\cdots,\qquad
\ext Y_1=\ext Y_2=\ext Y_3=\cdots.
\end{array}
$$
Since $Y_1$ is isomorphic to the maximal ideal $(x,y)R$, we have $\ext Y_1=\cm_0(R)$ by \cite[Corollary 2.6]{stcm}.
Lemma \ref{4}(2) implies that $\ext X_1=\cm_0(R)$.
Thus for all $j\in\Z_{>0}$ there are inclusions and equalities
$$
\ext M_j\supseteq\add\{B,M_1,M_2,\dots\},\quad
\ext N_j\supseteq\add\{R,A,N_1,N_2,\dots\},\quad
\ext X_j=\ext Y_j=\cm_0(R).
$$
Put $\p=(y)\in\spec R$.
It is clear that $(M_0)_\p=B_\p=0\ne A_\p$.
For each $j\in\Z_{>0}$, as $M_j\cong(xy,y^{j+1})R$ and $Y_j\cong(x,y^j)R$, we have $(M_j)_\p\subseteq\p R_\p=0$ and $(Y_j)_\p\cong R_\p$.
There is an exact sequence $0\to X_j\to R^{\oplus2}\to Y_j\to0$, which shows that $(X_j)_\p\cong R_\p$.
The exact sequence $\sigma_j$ shows $(N_j)_\p\ne0$.
Hence there is an equality
$$
\{E\in\ind\cm_0(R)\mid E_\p=0\}=\{B,M_j\mid j\in\Z_{>0}\}.
$$
Since every $E\in\ext M_j$ is such that $E_\p=0$, we see that $\ext M_j=\add\{B,M_1,M_2,\dots\}$.
Lemma \ref{4}(1) yields that $\ext N_j=\add\{R,A,N_1,N_2,\dots\}$ for every $j\in\Z_{>0}$.
Lemma \ref{3}(3) gives rise to an equality $\ext\{R,M_j\}=\add\{R,B,M_1,M_2,\dots\}$ for every $j\in\Z_{>0}$.
The exact sequence $\sigma_1$ shows that $X_2\in\ext\{M_1,N_1\}$, which implies that $\ext\{M_1,N_1\}=\cm_0(R)$.
Therefore, $\ext\{M_j,N_h\}=\cm_0(R)$ for all $j,h\in\Z_{>0}$.
From $\sigma_0$ and the proof of Lemma \ref{3}(3) we get an exact sequence $0\to A\to X_1\to B\to0$, which implies $X_1\in\ext\{A,B\}$, and $\ext\{A,B\}=\cm_0(R)$.
Now we obtain the Hasse diagram as in the theorem.
\end{proof}

We close the section by posing a question.
As a consequence of Theorems \ref{22}, \ref{23}, \ref{24}, \ref{21} and \ref{26}, this question has an affirmative answer provided that $R$ is a $P$-singularity over an algebraically closed uncountable field of characteristic zero, where $P\in\{\a_n,\a_\infty,\d_n,\d_\infty,\e_6,\e_7,\e_8\}$.
Note that this question is closely related to the problem studied intensively in \cite{tes} in the case of artinian local rings.

\begin{ques}
Let $R=k[\![x_0,x_1,\dots,x_d]\!]/(f)$ be a complete local hypersurface domain over a field $k$ with dimension at most two.
Then does it hold that $\E_0(R)=\{0,\add R,\cm_0(R)\}$\,?
\end{ques}

\section{Proofs of the main theorems}

Now we have reached the stage to prove Theorem \ref{28}, which are the main results of this paper.

\begin{proof}[Proofs of Theorem \ref{28}]
Since $R$ has finite or countable CM-representation type, it is isomorphic to one of the following hypersurfaces; see \cite[Theorems 9.8 and 14.16]{LW}.
$$
\begin{cases}
\a_n^d:\ k[\![x_0,x_1,\dots,x_d]\!]/(x_0^{n+1}+x_1^2+x_2^2+\cdots+x_d^2)&(n\ge1),\\
\a_\infty^d:\ k[\![x_0,x_1,\dots,x_d]\!]/(x_1^2+x_2^2+\cdots+x_d^2),\\
\d_n^d:\ k[\![x_0,x_1,\dots,x_d]\!]/(x_0^{n-1}+x_0x_1^2+x_2^2+\cdots+x_d^2)&(n\ge4),\\
\d_\infty^d:\ k[\![x_0,x_1,\dots,x_d]\!]/(x_0x_1^2+x_2^2+\cdots+x_d^2),\\
\e_6^d:\ k[\![x_0,x_1,\dots,x_d]\!]/(x_0^4+x_1^3+x_2^2+\cdots+x_d^2),\\
\e_7^d:\ k[\![x_0,x_1,\dots,x_d]\!]/(x_0^3x_1+x_1^3+x_2^2+\cdots+z_d^2),\\
\e_8^d:\ k[\![x_0,x_1,\dots,x_d]\!]/(x_0^5+x_1^3+x_2^2+\cdots+z_d^2).
\end{cases}
$$
Thus Theorem \ref{28}(2) is a direct consequence of Theorems \ref{22}, \ref{23}, \ref{24}, \ref{25}, \ref{17}, \ref{21}, \ref{26}, \ref{30} and \ref{27}.
To prove Theorem \ref{28}(1), Kn\"orrer's periodicity theorem \cite[Theorem (12.10)]{Y} reduces to the case where $\dim R\le2$.
The triangle equivalence $\lcm(R)\cong\ds(R)$ induces a triangle equivalence $\lcm_0(R)\cong\dso(R)$.
Hence the Hasse diagram of the extension-closed subcategories of $\dso(R)$ are the same as the Hasse diagram of the extension-closed subcategories of $\lcm_0(R)$, which is obtained from the Hasse diagrams given in Theorems \ref{22}, \ref{23}, \ref{24}, \ref{25}, \ref{17}, \ref{21}, \ref{26}, \ref{30} and \ref{27} by removing the vertices containing $R$ and the edges from/to them.
\end{proof}



\end{document}